\DeclareFontFamily{U}{mathx}{\hyphenchar\font45}
\DeclareFontShape{U}{mathx}{m}{n}{
      <5> <5.5> <6> <7> <7.5> <8> <9> <10> <10.5>
      <10.95> <12> <14.4> <17.28> <20.74> <24.88>
      mathx10
      }{}
\DeclareSymbolFont{mathx}{U}{mathx}{m}{n}
\DeclareMathAccent{\widebar}{0}{mathx}{"73}
\newtheorem{Theorem}{Theorem}
\newtheorem{Lemma}{Lemma}
\newtheorem{Corollary}{Corollary}
\newtheorem{Example}{Example}
\title{Probabilistic closed-form formulas for pricing nonlinear payoff variance and volatility derivatives under Schwartz model with time-varying log-return volatility}
\author{ \href{https://orcid.org/0000-0002-9186-6891}{\includegraphics[scale=0.06]{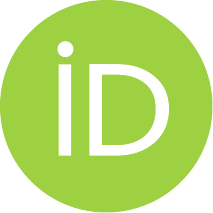}\hspace{1mm}Nontawat Bunchak} \\
	Department of Statistics \\ Faculty of Science, Kasetsart University \\ Bangkok 10900, Thailand \\
	\texttt{nontawat.bunch@ku.th} \\
	\And
	\href{https://orcid.org/0000-0001-7797-4418}{\includegraphics[scale=0.06]{orcid.pdf}\hspace{1mm}Udomsak Rakwongwan} \\
	Department of Mathematics \\ Faculty of Science, Kasetsart University \\ Bangkok 10900, Thailand\\
	\texttt{udomsak.ra@ku.th} \\
	\And
	\href{https://orcid.org/0000-0002-0986-2518}{\includegraphics[scale=0.06]{orcid.pdf}\hspace{1mm}Phiraphat Sutthimat} \thanks{Corresponding author} \\
	Department of Mathematics \\
	Faculty of Science, Kasetsart University \\
	 Bangkok 10900, Thailand \\
	\texttt{phiraphat.sut@ku.th} \\ 
}
\begin{document}
\maketitle

\begin{abstract}
This paper presents closed-form analytical formulas for pricing volatility and variance derivatives with nonlinear payoffs under discrete-time observations. The analysis is based on a probabilistic approach assuming that the underlying asset price follows the Schwartz one-factor model, where the volatility of log-returns is time-varying. A difficult challenge in this pricing problem is to solve an analytical formula under the assumption of time-varying log-return volatility, resulting in the realized variance being distributed according to a linear combination of independent noncentral chi-square random variables with weighted parameters. By utilizing the probability density function, we analytically compute the expectation of the square root of the realized variance and derive pricing formulas for volatility swaps. Additionally, we derive analytical pricing formulas for volatility call options. For the payoff function without the square root, we also derive corresponding formulas for variance swaps and variance call options. Additionally, we study the case of constant log-return volatility; simplified pricing formulas are derived and sensitivity with respect to volatility (vega) is analytically studied. Furthermore,we propose simple closed-form approximations for pricing volatility swaps under the Schwartz one-factor model. The accuracy and efficiency of the proposed methods are demonstrated through Monte Carlo simulations, and the impact of price volatility and the number of trading days on fair strike prices of volatility and variance swaps is investigated across various numerical experiments.
\end{abstract}

\keywords{Schwartz one-factor model \and Time-varying volatility \and Noncentral chi-square \and Volatility swaps \and Volatility options \and Vega}


\section{Introduction} \label{sec_intro}
\medskip

Commodity price volatility has become a focal point for academics, investors, and economists due to its critical role in volatility trading and risk management. Unlike stocks, bonds, and other traditional financial instruments, commodities exhibit exceptionally large price swings around a long-term equilibrium~\cite{blanco2001mean}, driving demand for volatility-based instruments such as variance and volatility swaps. These forward contracts allow traders to take pure positions on volatility through a simple payoff: at maturity, the long pays a fixed variance strike and receives any realized variance above that level, while the short pays the realized variance and receives the fixed strike. Investor interest in these swaps surged in 1998 following the collapse of Long-Term Capital Management (LTCM) and the ensuing spike in market turbulence; initially dominated by hedge funds selling volatility to dealers~\cite{mehta1999equity}, the market has since grown into a vibrant arena with extensive research on pricing and hedging these contracts.  

Research on variance swap pricing has overwhelmingly focused on equity markets, where a series of foundational studies has established robust valuation and replication methods. Demeterfi et al.~\cite{demeterfi1999guide} demonstrated that, under continuous price paths, variance swaps can be statically replicated using portfolios of vanilla options and that volatility swaps follow by dynamically trading those positions, also deriving an analytical fair-value formula that accounts for realistic volatility skews. Windcliff, Forsyth and Vetzal~\cite{windcliff2006pricing} applied numerical partial integro-differential equation methods to price volatility products and explored delta and delta-gamma hedging strategies for volatility swaps. Javaheri, Wilmott and Haug~\cite{javaheri2004garch} valued and hedged volatility swaps within a GARCH(1,1) stochastic volatility framework by determining the first two moments of realized variance through a PDE approach and approximating expected realized volatility via Brockhaus and Long\rq s second-order convexity  djustment~\cite{brockhaus2000volatility}. Th\'{e}or\'{e}t, Zabr\'{e} and Rostan~\cite{theoret2002pricing} then provided an analytical solution for pricing volatility swaps under the same GARCH model and applied it to the S\&P60 Canada index. Subsequent work by Carr and Madan~\cite{jouini2001option} confirmed exact replication of variance swaps through vanilla options, Ma and Xu~\cite{ma2010efficient} incorporated control variate techniques in stochastic volatility settings, Zhu and Lian~\cite{zhu2011closed} derived closed-form solutions under Heston\rq s two-factor model, Rujivan and Zhu~\cite{rujivan2012simplified} offered a simplified analytical approach for discretely sampled variance swaps, Zheng and Kwok~\cite{zheng2014closed} extended pricing to generalized swaps with simultaneous jumps in price and variance, Shen and Siu~\cite{shen2013pricing} included stochastic interest rates and regime switches, and Chan and Platen~\cite{chan2015pricing} developed explicit formulas under the modified constant elasticity of variance model. Despite this extensive equity-focused literature, commodity markets have received little attention; the sole exception is Swishchuk~\cite{swishchuk2010variance}, who used the Brockhaus--Long approximation to derive variance and volatility swap formulas for energy assets following a continuous-time GARCH(1,1) process.

In the context of volatility and variance derivatives for commodities, models for stochastic commodity price dynamics differ from those for other asset classes because they explicitly account for the convenience yield and allow for multiple sources of uncertainty. Schwartz~\cite{schwartz1997stochastic} introduced three such models. In the first model the log of the spot price follows a mean reverting process around a constant convenience yield. The second model treats the convenience yield itself as stochastic. The third model further extends uncertainty by allowing interest rates to evolve randomly over time. In 2016, Chunhawiksit and Rujivan~\cite{chunhawiksit2016pricing} proposed an analytical closed‐form solution for pricing discretely‐sampled variance swaps on commodities under the one‐factor Schwartz model, defining realized variance via squared percentage returns, validating its financial meaningfulness, and demonstrating substantial computational efficiency gains over Monte Carlo (MC) methods. In the same year, Weraprasertsakun and Rujivan~\cite{weraprasertsakun2017closed} extended this method by deriving a closed‐form formula for pricing discretely‐sampled variance swaps on commodities, now defining realized variance in terms of squared log returns, showing that their solution produces financially meaningful fair delivery prices throughout the model\rq s parameter space and dramatically reduces the computational burden compared to MC simulations, thereby offering market practitioners a highly efficient and implementable analytical tool.

However, the results proposed in~\cite{chunhawiksit2016pricing,weraprasertsakun2017closed} and also~\cite{chumpong2021analytical,duangpan2022analytical,sutthimat2022closed} are limited to variance swaps because the closed-form formulas were derived by solving the Feynman--Kac formula to obtain conditional moments of the one-factor Schwartz model, whereas volatility swaps cannot be obtained through conditional moments and instead require half moments of log return. And, of course, the closed-form formula for the half moments of the log returns under the one-factor Schwartz model is not currently available, so a new technique must be introduced to address this issue.

In this paper, we derive closed-form analytical formulas for pricing discretely sampled volatility and variance derivatives under the one-factor Schwartz model with time-varying log-return volatility by expressing the realized variance as a linear combination of independent noncentral chi-square random variables and using their probability density function (PDF) to compute the expectation of its square root. As a result, we obtain explicit pricing formulas for volatility swaps, volatility call options, variance swaps and variance call options (with simplified expressions and analytical vega in the constant-volatility case), propose simple closed-form approximations for volatility swaps, and demonstrate the accuracy and efficiency of our methods through extensive MC simulations.

The remainder of this paper is organized as follows. Section~\ref{sec_Schwartz} reviews the one‐factor Schwartz model. Section~\ref{sec_VarAndVol} introduces variance and volatility swaps. In Section~\ref{sec_pdf_rv}, we derive the probability density function of the realized variance and present its conditional moments. Section~\ref{sec_pricing_swaps} uses these results to obtain closed‐form pricing formulas for volatility and variance swaps and their corresponding options, and provides sensitivity analyzes. Section~\ref{sec_application} validates our analytical findings with numerical experiments and discussion. Finally, Section~\ref{sec_conclusion} summarizes our contributions and suggests avenues for future research.


\section{Schwartz model} \label{sec_Schwartz}
\medskip

In this section we recall Schwartz one-factor model~\cite{schwartz1997stochastic} for commodity price dynamics, which we adopt under the risk-neutral probability space $(\Omega,\mathcal{F},(\mathcal{F}_t)_{t\in[0,T]},\mathbb{Q})$ to ensure absence of arbitrage in variance swap pricing. Under $\mathbb{Q}$, the commodity spot price $S_t$ follows the stochastic differential equation (SDE)
\begin{equation} \label{sde_st}
  \mathrm{d}S_t = \kappa(\mu - \ln S_t)\,S_t\,\mathrm{d}t + \sigma\,S_t\,\mathrm{d}W_t,\quad S_0>0,\;t\in(0,T],
\end{equation}
where $\kappa>0$ is the speed of mean reversion to the long-run log-price level $\mu$, $\sigma>0$ is the volatility, and $(W_t)_{t\in[0,T]}$ is a standard Brownian motion on $(\Omega,\mathcal{F},(\mathcal{F}_t)_{t\in[0,T]},\mathbb{Q})$. Defining the log‐price process $X_t=\ln S_t$ and applying It\^{o}\rq s lemma to \eqref{sde_st} yields the Ornstein--Uhlenbeck (OU) process
\begin{equation}\label{sde_xt}
    \mathrm{d}X_t
    = \kappa\bigl(\alpha - X_t\bigr)\,\mathrm{d}t
    + \sigma\,\mathrm{d}W_t,
    \quad X_0 = \ln S_0,
\end{equation}
where $\kappa \ln S_t$ is the instantaneous convenience yield at time $t$ and
\begin{equation*}\label{alpha}
    \alpha \;=\; \mu \;-\;\frac{\sigma^2}{2\,\kappa}\,.
\end{equation*}
It is important to note that, unlike the original OU process sense, a commodity does not behave like a conventional asset, and its spot price (or equivalently the logarithm of the spot price) serves as the state variable on which contingent claims are written. 


\section{Variance and volatility swaps} \label{sec_VarAndVol}
\medskip

Following the collapse of LTCM in late 1998, when implied stock-index volatility reached unprecedented levels, the market for variance and volatility swaps began to expand. Unlike traditional stock options, these swaps provide pure exposure to future volatility~\cite{demeterfi1999guide} and have attracted investment banks and other financial institutions. Investors use them to speculate on future volatility, to trade the spread between realized and implied volatility or to hedge volatility exposure in other positions. Today, variance and volatility swaps are actively quoted across a broad range of assets such as stock indices, currencies and commodities. In the remainder of this chapter, we provide a comprehensive overview of their definitions, valuation strategies and practical implementations.

In a mathematical context, we define discretely sampled volatility and variance based on the log-returns of the underlying asset price. The primary objective of this paper is to estimate the log-return realized variance, commonly referred to as the realized variance, which is defined as
\begin{equation} \label{def_rv}
    RV \equiv RV_d(t_1,N,T) := \frac{1}{T} \sum_{i=2}^N \ln{\!\left(\frac{S_{t_i}}{S_{t_{i-1}}}\right)} \times 100^2 =\frac{AF}{N-1} \sum_{i=2}^N \ln {\!\left(\frac{S_{t_i}}{S_{t_{i-1}}}\right)} \times 100^2,
\end{equation}
for $t_1 \in (0,T)$, where $S_{t_i}$ is the underlying asset\rq s closing price at time $t_i$ for the $i$-th observation belonging to the total number of observations $N \geq 2$. The term $AF=\frac{N-1}{T}=\frac{1}{\Delta t}$ is the annualization factor, used to standardize the realized variance over the time horizon $T$.


\subsection{Variance swaps}
\medskip

Under a risk-neutral martingale measure $\mathbb{Q}$, and assuming that $r(t_1)$ is the time-varying risk-free interest rate at time $t_1$, the value of a variance swap at time $t_1$ can be expressed as the expected present value of its future payoff:
\begin{equation*} \label{payoff_var}
    V_{t_1,\mathrm{var}} = e^{-\int_{t_1}^{T} r(s)\, \mathrm{d}s} \, \mathbb{E}_{t_1}^{\mathbb{Q}} \big[ RV_d - \widebar{K}_{\mathrm{var}} \big] \times L_{\mathrm{var}},
\end{equation*}
where $L_{\mathrm{var}}$ denotes the notional amount of the swap, measured in dollars per annualized variance point. More specifically, $L_{\mathrm{var}}$ represents the amount received by the holder at maturity for each unit by which the realized variance $RV_d$ exceeds the strike $\widebar{K}_{\mathrm{var}}$.

Because there is no upfront cost to enter into a variance swap, we have $V_{t_1,\mathrm{var}} = 0$. Therefore, the fair strike price of a variance swap is given by
\begin{equation} \label{def_kvar}
    \widebar{K}_{\mathrm{var}} := \mathbb{E}_{t_1}^{\mathbb{Q}}\big[RV_d\big],
\end{equation}
where $\mathbb{E}_{t_1}^{\mathbb{Q}}[X]$ denotes the conditional expectation of a random variable $X$ with respect to the filtration $\mathcal{F}_{t_1}$ under the risk-neutral martingale measure $\mathbb{Q}$. Thus, the valuation problem for a variance swap reduces to computing the expected value of the future realized variance in the risk-neutral world.


\subsection{Volatility swaps}
\medskip

Similar to the valuation of a variance swap, the value of a volatility swap at time $t_1$ can be expressed as the expected present value of the future payoff,
\begin{equation*} \label{payoff_vol}
    V_{t_1,\mathrm{vol}} = e^{-\int_{t_1}^T r(s)\, \mathrm{d}s} \, \mathbb{E}_{t_1}^{\mathbb{Q}}\big[\sqrt{RV_d} - \widebar{K}_{\mathrm{vol}}\big] \times L_{\mathrm{vol}},
\end{equation*}
where $L_{\mathrm{vol}}$ is the notional amount of the swap, denominated in dollars per annualized volatility point. Specifically, $L_{\mathrm{vol}}$ represents the amount that the holder of the contract receives at maturity if $\sqrt{RV_d}$ exceeds the strike $\widebar{K}_{\mathrm{vol}}$ by one unit.

Since there is no upfront cost to enter into a volatility swap, we set $V_{t_1,\mathrm{vol}} = 0$. Consequently, the fair strike price of a volatility swap is given by
\begin{equation} \label{def_kvol}
    \widebar{K}_{\mathrm{vol}} := \mathbb{E}_{t_1}^{\mathbb{Q}}\big[\sqrt{RV_d}\big].
\end{equation}


\subsection{Practical applications}
\medskip

Volatility exhibits several attractive features for trading. First, it rises with risk and uncertainty and tends to increase more after bad news than after good news. Second, it follows a mean reverting process so high volatility levels decrease and low levels increase. Finally, there is a negative correlation between volatility and asset prices so volatility remains high after large downward moves in the market~\cite{demeterfi1999guide}. Variance and volatility swaps allow investors to profit from or hedge against changes in future volatility. For example, a dealer who writes an option can face losses if volatility spikes and the option must be repurchased at a higher price. To transfer this risk, the dealer can enter a volatility swap with a hedge fund. The hedge fund takes the opposite position because it expects volatility to fall and seeks to earn a profit. As the most direct instruments for trading volatility, variance and volatility swaps are central to modern financial markets and will continue to play a key role.


\section{The PDF of the realized variance} \label{sec_pdf_rv}
\medskip

Consider $X_{t_{i}}$ follows the OU stochastic process~\eqref{sde_xt} with mean $\mathbb{E}\big[X_{t_{i}}\big]$ and variance $\mathbb{VAR}\big[X_{t_{i}}\big]$. Moreover, a covariance of $X_{t_{i}}$ and $X_{t_{i-1}}$ is denoted by $\mathbb{COV}\big[X_{t_i},X_{t_{i-1}}\big]$. Let $Z_i := X_{t_{i}}-X_{t_{i-1}} = \ln{S_{t_{i}}}-\ln{S_{t_{i-1}}}$ also known as the log-return. We obtain 
\begin{equation} \label{z}
    \widebar{Z}_i = \widebar{\mu}_i+\widebar{\sigma}_i^{2} \, \big(W_{t_{i}}-W_{t_{i-1}}\big) \sim \mathcal{N}\big( \, \widebar{\mu}_i ,\widebar{\sigma}_i^{2}\big),
\end{equation}
where 
\begin{equation} \label{mean_z}
    \widebar{\mu}_i := \mathbb{E}\big[ X_{t_{i}}\big]-\mathbb{E}\big[X_{t_{i-1}}\big],
\end{equation}
and
\begin{equation} \label{variance_z}
    \widebar{\sigma}_i^{2} :=\mathbb{VAR}\big[X_{t_{i}}\big]+\mathbb{VAR}\big[X_{t_{i-1}}\big]-2 \, \mathbb{COV}\big[X_{t_i},X_{t_{i-1}}\big],
\end{equation}
at time $t_i \in (0,T]$ for $i=2,3,\ldots,N$. So this means that $\widebar{Z}_i$ is a normally distributed random variable with mean $\widebar{\mu}_i$ and variance $\widebar{\sigma}_i^{2} $ for all $i=2,3,\ldots,N$.  

By the definition of the noncentral chi-square random variables, we have
\begin{equation} \label{yi}
    \widebar{Y}_i := \left(\frac{\widebar{Z}_i}{\widebar{\sigma}_i}\right)^2 \sim \mathcal{NC}_{\chi^2_1}(\widebar{\delta}_i),
\end{equation}
for all $i=2,3,\ldots,N$ where $\widebar{Y_i}$ is a noncentral chi-square distribution with degrees of freedom $\nu_i$ and noncentrality parameter $\widebar{\delta}_i$ where $\nu_i=1$ then
\begin{equation} \label{degree_of_freedom}
    \nu := \sum^N_{i=2} \nu_i = N-1,
\end{equation}
and
\begin{equation} \label{noncentrality}
    \widebar{\delta}_i := \left( \frac{\widebar{\mu}_i}{\widebar{\sigma}_i} \right)^2,
\end{equation}
for $i=2, 3, \ldots, N$. The log-return realized variance defined in~\eqref{def_rv} can be shown in terms of a linear combination of noncentral chi-square random variables, by applying~\eqref{z}--\eqref{noncentrality}, as
\begin{equation} \label{rv}
    RV_d(t_1,N,T) = \sum_{i=2}^{N} \widebar{\alpha}_i \widebar{Y}_i,
\end{equation}
where the weighted parameter can be defined by
\begin{equation} \label{alpha_i}
    \widebar{\alpha}_i := \frac{100^2}{T}\widebar{\sigma}^2_i,
\end{equation}
for $i = 2, 3, \ldots, N$.


\subsection{Laguerre expansions for the PDF of \texorpdfstring{$RV_d$}{}}
\medskip

Over the past several decades, the distribution of linear combinations of noncentral chi-square random variables, or equivalently, quadratic forms in normal random variable vectors, has been the subject of extensive research. The PDFs associated with the distribution have been expressed in various representations, accompanied by many proposed methods for their efficient computation. For instance, in 1961, Shah and Khatri~\cite{shah1961distribution} introduced the distribution for the definite case using power series expansions. In 1962, Ruben~\cite{ruben1962probability} proposed a representation for the infinite case in terms of chi-squared series. Subsequently, in 1963, Shahe~\cite{shah1963distribution} extended their approach to the infinite case and compared their results with those of Ruben. Then, in 1967, Kotz et al.~\cite{kotz1967series} represented the distribution in terms of various expressions including power series expansions, Laguerre series expansions, chi-squared series, and noncentral chi-squared series. Furthermore, in 1977, Davis~\cite{davis1977differential} presented percentage point approximation tables for the distribution under a differential equation approach. To represent the PDF of a linear combination of independent noncentral chi-square random variables with positive weights, as given in~\eqref{rv}, one of the most efficient computational methods is developed from the work of Kotz et al. (1967) and Davis (1977), and is represented in terms of Laguerre series expansions proposed by Casta\~{n}o-Mart\'{i}nez and L\'{o}pez-Bl\'{a}zquez in 2005~\cite{castano2005distribution}.  

For our study, we apply the presented approach by Casta\~{n}o-Mart\'{i}nez and L\'{o}pez-Bl\'{a}zquez~\cite{castano2005distribution} to obtain the PDF of $RV_d$, as shown in the following theorem. The generalized Laguerre function is given by the following series representation 
\begin{equation*}
    \mathbf{L}^{(a)}_k (x) = \sum^\infty_{m=0} (-1)^m\frac{(a+k)!}{(a+m)!\, (k-m)!\, m!} \, x^m,
\end{equation*}
where the parameter $a>-1$ and the fractional order $k \in (n-1,n)$ for some $n \in \mathbb{N}$.
\begin{Theorem} \label{thm_pdf_rv}
    Let $y>0$. The PDF of $RV_d$ written as $\widebar{f}_\nu$, satisfying
    \begin{equation*} \label{prob}
        \mathbb{Q}(RV_d \leq y) = \int^y_0 \widebar{f}_\nu (\zeta) \, \mathrm{d} \zeta,
    \end{equation*}
    and can be illustrated as
    \begin{equation} \label{pdf_rv}
        \widebar{f}_\nu (y) \equiv \widebar{f}_\nu^{(\widebar{\beta},\widebar{\mu}_0)} (y) := \frac{e^{-\frac{y}{2\widebar{\beta}}} y^{\frac{\nu}{2}-1}}{(2\widebar{\beta})^{\frac{\nu}{2}}} \sum^\infty_{k=0} \frac{\Gamma(k-1)}{\Gamma \! \left(\frac{\nu}{2}+k\right)} \widebar{c}_k \, \mathbf{L}^{\left(\frac{\nu}{2}-1\right)}_k \! \! \left( \frac{\nu y}{4 \widebar{\beta} \widebar{\mu}_0} \right),
    \end{equation}
    with three parameters $\nu=N-1$, $\widebar{\beta}>0$, and $\widebar{\mu}_0>0$, where $\Gamma (x) =\int_0^{\infty} t^{x-1} e^{-t} \, \mathrm{d} t$ is the gamma function. The coefficient $\widebar{c}_k \equiv \widebar{c}_k \big(\nu,\widebar{\sigma}_i,\widebar{\mu}_i;\widebar{\beta},\widebar{\mu}_0\big)$, for $k = 1, 2, 3, \ldots$, depends on time-varying log-return volatility $\widebar{\sigma}_i$ and mean $\widebar{\mu}_i$ for $i=2, 3, \ldots, N$, satisfying the following recurrent relation,
    \begin{align}
        &\widebar{c}_k = \frac{1}{k} \sum_{j=0}^{k-1} \widebar{c}_j \widebar{d}_{k-j}, \quad \text{ for } k\geq1, \label{recurrentck} \\
        &\widebar{c}_0 = \left( \frac{\nu}{2\widebar{\mu}_0} \right)^{\frac{\nu}{2}} \prod^N_{i=2} \left( 1+\frac{\widebar{\alpha}_i}{\widebar{\beta}}\left(\frac{\nu}{2\widebar{\mu}_0}-1\right)\right)^{-\frac{1}{2}} \exp{ \left( -\frac{1}{2} \sum^N_{i=2} \frac{ \widebar{\delta}_i \widebar{\alpha}_i (\nu-2\widebar{\mu}_0)}{2\widebar{\beta} \widebar{\mu}_0+\widebar{\alpha}_i (\nu-2\widebar{\mu}_0)} \right)}, \label{recurrentc0} \\
        &\widebar{d}_j = \sum^N_{i=2} \frac{\nu_i}{2} \left( \frac{\widebar{\beta}-\widebar{\alpha}_i}{\widebar{\beta}-\widebar{\alpha}_i\big(\frac{\nu}{2\widebar{\mu}_0}-1\big)} \right)^{j}
        -\frac{j\widebar{\beta}\nu}{4\widebar{\mu}_0} \sum^N_{i=2} \widebar{\delta}_i \widebar{\alpha}_i (\widebar{\beta}_i-\widebar{\alpha}_i)^{j-1} \left( \frac{2\widebar{\mu}_0}{2\widebar{\beta}\widebar{\mu}_0+\widebar{\alpha}_i(\nu-2\widebar{\mu}_0)}\right)^{j+1}, \quad \text{ for } j\geq1. \label{recurrentdj}
    \end{align}
\end{Theorem}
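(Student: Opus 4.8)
The plan is to specialize the general Laguerre-series representation of Casta\~{n}o-Mart\'{i}nez and L\'{o}pez-Bl\'{a}zquez~\cite{castano2005distribution} for the density of a positive linear combination of independent noncentral chi-square variables to the specific combination $RV_d = \sum_{i=2}^N \widebar{\alpha}_i \widebar{Y}_i$ established in~\eqref{rv}. First I would assemble the moment generating function (MGF) of $RV_d$. Since the $\widebar{Y}_i \sim \mathcal{NC}_{\chi^2_1}(\widebar{\delta}_i)$ are independent with one degree of freedom, the MGF of each scaled term $\widebar{\alpha}_i \widebar{Y}_i$ is $(1 - 2\widebar{\alpha}_i t)^{-1/2}\exp\!\big(\widebar{\delta}_i \widebar{\alpha}_i t/(1 - 2\widebar{\alpha}_i t)\big)$, so that the MGF of $RV_d$ is the product of these factors over $i = 2,\dots,N$, valid for $t$ in a neighborhood of the origin.

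Next I would introduce a reference gamma density with shape $\nu/2$ and scale $2\widebar{\beta}$, which is the leading factor $e^{-y/(2\widebar{\beta})} y^{\nu/2-1}/(2\widebar{\beta})^{\nu/2}$ appearing in~\eqref{pdf_rv}, and expand the density of $RV_d$ against the associated system of generalized Laguerre functions $\mathbf{L}^{(\nu/2-1)}_k$. The coefficients are identified by comparing the MGF of $RV_d$ with the known generating-function identity for the Laplace transforms of $y^{\nu/2-1}e^{-y/(2\widebar{\beta})}\mathbf{L}^{(\nu/2-1)}_k(\cdot)$; this reduces the problem to expanding the ratio $R(t)$ of the MGF of $RV_d$ to the reference gamma MGF as a power series $R(t) = \sum_{k\geq 0} \widebar{c}_k\, t^k$. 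The two free parameters $\widebar{\beta}$ and $\widebar{\mu}_0$ enter through the choice of reference scale and through the argument rescaling $\nu y/(4\widebar{\beta}\widebar{\mu}_0)$ of the Laguerre functions, and are retained as tuning parameters that control convergence.

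The recurrence~\eqref{recurrentck} then follows from the classical logarithmic-derivative trick: writing $\log R(t) = \sum_{j\geq 1} (\widebar{d}_j/j)\, t^j$ and differentiating $R(t) = \exp\!\big(\sum_j (\widebar{d}_j/j)\, t^j\big)$ gives $R'(t) = R(t)\sum_{j\geq1}\widebar{d}_j\, t^{j-1}$, and matching coefficients of $t^{k-1}$ yields $k\,\widebar{c}_k = \sum_{j=1}^k \widebar{d}_{j}\,\widebar{c}_{k-j}$, which is exactly the stated relation after reindexing. I would then read off $\widebar{c}_0 = R(0)$ and the $\widebar{d}_j$ from the Taylor coefficients of $\log R(t)$: the $(1-2\widebar{\alpha}_i t)^{-1/2}$ factors contribute the first sum in~\eqref{recurrentdj}, while the exponential noncentrality factors $\exp\!\big(\widebar{\delta}_i \widebar{\alpha}_i t/(1-2\widebar{\alpha}_i t)\big)$, expanded relative to the reference, contribute the second sum; the product of the constant terms of both pieces assembles $\widebar{c}_0$ as in~\eqref{recurrentc0}.

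The main obstacle will be the careful algebraic bookkeeping in extracting the $\widebar{d}_j$: one must correctly compute the Taylor expansion of the logarithm of each factor of the MGF ratio, track how the parameters $\widebar{\beta}$ and $\widebar{\mu}_0$ rescale the arguments, and verify that the resulting closed forms match the fractions $(\widebar{\beta}-\widebar{\alpha}_i)/(\widebar{\beta}-\widebar{\alpha}_i(\nu/(2\widebar{\mu}_0)-1))$ and $2\widebar{\mu}_0/(2\widebar{\beta}\widebar{\mu}_0+\widebar{\alpha}_i(\nu-2\widebar{\mu}_0))$ exactly, including the weighting by the degrees of freedom $\nu_i$ and the noncentralities $\widebar{\delta}_i$. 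A secondary technical point is justifying convergence of the Laguerre series and the validity of the term-by-term Laplace inversion, which follows from the domain of analyticity of the MGF together with the completeness results established in~\cite{castano2005distribution}.
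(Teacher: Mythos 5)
Your strategy is sound and would, if carried through, reproduce the stated formula, but it is considerably more work than what the paper actually does, because the paper does not re-derive the Laguerre expansion at all. Its proof has two short steps. First, it establishes the representation $RV_d=\sum_{i=2}^{N}\widebar{\alpha}_i\widebar{Y}_i$ as a weighted sum of \emph{independent} noncentral chi-square variables by writing down the explicit solution of the OU equation~\eqref{sde_xt}, computing the mean~\eqref{mean_x}, variance~\eqref{variance_x} and covariance~\eqref{cov_x} that define $\widebar{\mu}_i$ and $\widebar{\sigma}_i$, and arguing independence of the log-returns from the independence of the disjoint Brownian increments $W_{t_i}-W_{t_{i-1}}$. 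Second, it substitutes $n=N$, $\alpha_i=\widebar{\alpha}_i$, $\delta_i=\widebar{\delta}_i$, $p=\nu/2$, $\beta=\widebar{\beta}$, $\mu_0=\widebar{\mu}_0$ into equations (3.2)--(3.4) of Casta\~{n}o-Mart\'{i}nez and L\'{o}pez-Bl\'{a}zquez~\cite{castano2005distribution} and reads off \eqref{pdf_rv}--\eqref{recurrentdj}. What you propose --- forming the MGF of $RV_d$ as the product of the factors $(1-2\widebar{\alpha}_i t)^{-1/2}\exp\bigl(\widebar{\delta}_i\widebar{\alpha}_i t/(1-2\widebar{\alpha}_i t)\bigr)$, expanding its ratio to a reference gamma MGF, and obtaining \eqref{recurrentck} from the logarithmic-derivative identity $k\,\widebar{c}_k=\sum_{j}\widebar{d}_j\widebar{c}_{k-j}$ --- is precisely the internal mechanism of that cited result, so your route buys self-containedness at the price of the algebraic bookkeeping you yourself flag as the main obstacle.

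The one substantive omission is that you assert independence of the $\widebar{Y}_i$ (you need it the instant you write the MGF of $RV_d$ as a product) without justification, and you never derive the explicit expressions for $\widebar{\mu}_i$, $\widebar{\sigma}_i$ and hence $\widebar{\alpha}_i$, $\widebar{\delta}_i$ from the Schwartz dynamics. That verification is the only model-specific content of the proof --- everything downstream of~\eqref{rv} is generic distribution theory --- so a complete write-up must include it: solve~\eqref{sde_xt}, compute $\mathbb{E}[X_{t_i}]$, $\mathbb{VAR}[X_{t_i}]$ and $\mathbb{COV}[X_{t_i},X_{t_{i-1}}]$, and justify the independence of the increments $X_{t_i}-X_{t_{i-1}}$. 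Note that for an OU process this last point is more delicate than for Brownian motion itself, since $X_{t_i}-X_{t_{i-1}}$ contains a term proportional to $X_{t_{i-1}}$ and hence depends on the past of the path; whatever argument you supply here (the paper leans on the independence of disjoint Brownian increments) deserves explicit care, because the validity of the product form of the MGF, and with it your entire derivation, rests on it.
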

\begin{proof}
    We will derive $\widebar{f}_\nu$, i.e. the PDF of $RV_d$, by applying the presented approach by Casta\~{n}o-Mart\'{i}nez and L\'{o}pez-Bl\'{a}zquez~\cite{castano2005distribution} in Section~3 as follows.
    
    Firstly, we consider a solution of the mean-reverting OU process~\eqref{sde_xt},
    \begin{equation*} \label{solu_x}
        X_{t_i} = X_0e^{-\kappa t_i}+\kappa \alpha e^{-\kappa t_i} \int_0^{t_i} e^{\kappa s} \, \mathrm{d} s+\sigma e^{-\kappa t_i} \int_0^{t_i} e^{\kappa s} \, \mathrm{d} W_t.
    \end{equation*}
    We know that $X_{t_i}$ is normally distributed with mean
    \begin{equation} \label{mean_x}
        \mathbb{E}[X_{t_{i}}] = e^{-\kappa t_i}x_0+(1-e^{-\kappa t_{i}}) \alpha
    \end{equation}
    for given $X_{t_0}=x_0$, and variance
    \begin{equation} \label{variance_x}
        \mathbb{VAR}[X_{t_{i}}]  = \frac{\sigma^2}{2\kappa}(1-e^{-2\kappa t_{i}})
    \end{equation}
    at time $t_i \in (0,T]$ for $i=2, 3, \ldots,N$. In addition, since $X_{t_i}$ and $X_{t_{i-1}}$ are jointly distributed normal random variables, we then obtain that its covariance can be expressed as
    \begin{equation} \label{cov_x}
        \mathbb{COV}[X_{t_i},X_{t_{i-1}}] = \mathbb{VAR}[X_{t_{i-1}}] e^{-\kappa \Delta t},
    \end{equation}
    where $\Delta t = t_{i}-t_{i-1}$ (see Franco~\cite{FrancoMaximumLE}).
    According to the second term on RHS of the log-return in~\eqref{z}, the increments $\big(W_{t_{j}}-W_{t_{j-1}}\big)\sim\mathcal{N}\big(0,\Delta t \big)$ and $\big(W_{t_{k}}-W_{t_{k-1}}\big)\sim\mathcal{N}\big(0,\Delta t \big)$ for all $j,k= 2, 3, \ldots, N$ and $j \neq k$, are independent by using the property of a standard Brownian motion. Therefore, $\widebar{Z}_i $ for $ i=2, 3, \ldots, N$, is the sequence of independent and identically distributed (i.i.d) random variables. Thus, a result of a random variable $RV_d$ as illustrated in~\eqref{def_rv} is a linear combination of independent noncentral chi-square random variables with weighted parameters $\widebar{\alpha}_i>0$ as defined in~\eqref{rv}, for $i = 2, 3, \ldots, N$.
    
    Secondly, in Section~3 of~\cite{castano2005distribution}, the constants $n, \alpha_i, \delta_i, p$ and the parameters, $\beta_i$, and $\mu_0$, are replaced by the following: $n=N, \alpha_i=\widebar{\alpha}_i, \delta_i=\widebar{\delta}, p=\nu/2, \beta=\widebar{\beta}_i, \mu_0=\widebar{\mu}_0$. Consequently, by applying equations (3.2), (3.3), (3.4)a, and (3.4)b in~\cite{castano2005distribution}, the PDF of $RV_d$ can be constructed as shown in~\eqref{pdf_rv} where the recurrent coefficient $\widebar{c}_k$ for $k=0, 1, 2, \ldots$, can be calculated by using~\eqref{recurrentck} and~\eqref{recurrentc0} as well as the coefficient $\widebar{d}_j$ for $k=1, 2, 3, \ldots$, can be calculated by using~\eqref{recurrentdj}.
\end{proof}


\subsection{The explicit formula for \texorpdfstring{$\mathbb{E} [RV^\ell_d]$}{}}
\medskip

By utilizing the Laguerre expansion~\eqref{pdf_rv} along with some properties of the generalized hypergeometric function
\begin{equation} \label{hyper_fun}
    _p\mathbf{F}_q(a_1,a_2,\ldots,a_p;b_1,b_2,\ldots,b_q;z) = \sum_{k=0}^\infty \frac{(a_1)_k(a_2)_k\cdots(a_p)_k}{(b_1)_k(b_2)_k\cdots(b_q)_k} \frac{z^k}{k!},
\end{equation}
where $(\cdot)_k$ denotes the usual Pochhammer symbol~\cite{slater1966generalized}, the explicit formula for the $\ell$-th conditional moment of $RV_d$ is derived in the following theorem.
\begin{Theorem} \label{thm_moment}
    For any $\ell \in \mathbb{R}_+$, the raw moment of $RV_d$ can be illustrated as
    \begin{equation} \label{generalmoment}
        \mathbb{E}^\mathbb{Q}_{t_1} [RV^\ell_d] = (2\widebar{\beta})^\ell \sum^\infty_{k=0} \left(\frac{\nu}{2\widebar{\mu}_0}\right)^k \frac{\Gamma \! \left(\ell+k+\frac{\nu}{2}\right)}{\Gamma \! \left(k+\frac{\nu}{2}\right)}{_2\mathbf{F}_1} \! \! \left(-k,1-k-\frac{\nu}{2};1-k-\frac{\nu}{2}-\ell;\frac{2\widebar{\mu}_0}{\nu}\right)\widebar{c}_k,   
    \end{equation}
    where the coefficients $\widebar{c}_k$, $k=0, 1, 2,\ldots$, are chosen according to~\eqref{recurrentck}--\eqref{recurrentdj} and the parameters $\nu>0$ and $\widebar{\beta}>0$ can be arbitrarily chosen.
\end{Theorem}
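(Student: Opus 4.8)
The plan is to read off the $\ell$-th moment directly from the density established in Theorem~\ref{thm_pdf_rv}, i.e. to evaluate
\[
    \mathbb{E}^{\mathbb{Q}}_{t_1}\bigl[RV_d^\ell\bigr] = \int_0^\infty y^\ell \, \widebar{f}_\nu(y)\,\mathrm{d}y .
\]
First I would substitute the Laguerre expansion~\eqref{pdf_rv}, interchange the summation over $k$ with the integral, and thereby reduce the computation to a single Laplace-type integral, for each fixed $k$, of the generalized Laguerre polynomial $\mathbf{L}^{(\nu/2-1)}_k$ against the gamma weight $y^{\ell+\frac{\nu}{2}-1}e^{-y/(2\widebar{\beta})}$. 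This isolates the only nontrivial object in the calculation and leaves the coefficient $\widebar{c}_k$ and the prefactors to be carried along.

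Next I would fix $k$ and insert the finite series representation of $\mathbf{L}^{(\nu/2-1)}_k$, which terminates at order $k$ for integer $k$. Each monomial $y^{m}$ combines with the weight to produce an elementary gamma integral,
\[
    \int_0^\infty y^{\ell+\frac{\nu}{2}-1+m}\, e^{-y/(2\widebar{\beta})}\,\mathrm{d}y = (2\widebar{\beta})^{\ell+\frac{\nu}{2}+m}\,\Gamma\!\left(\ell+\tfrac{\nu}{2}+m\right),
\]
so the inner integral collapses to a finite sum over $m$ of ratios of gamma functions times powers of $\nu/(2\widebar{\beta}\widebar{\mu}_0)$. Equivalently, one may shortcut this step by quoting the classical Laplace transform of $t^{b-1}\mathbf{L}^{(a)}_n(\lambda t)$, which is itself already a terminating Gauss hypergeometric function.

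The heart of the argument is then to recognise this finite sum as the Gauss hypergeometric function $_2\mathbf{F}_1$ in~\eqref{generalmoment}, using the series definition~\eqref{hyper_fun}. I would convert every gamma ratio into Pochhammer symbols and re-index the summation via $m \mapsto k-m$; this reflection is precisely what turns the argument $\nu/(2\widebar{\mu}_0)$ arising naturally in the expansion into its reciprocal $2\widebar{\mu}_0/\nu$, and it generates the parameters $-k$, $1-k-\frac{\nu}{2}$ and $1-k-\frac{\nu}{2}-\ell$. The structural simplification that makes the prefactors clean is the cancellation of the factor $\Gamma\!\left(\frac{\nu}{2}+k\right)$ between the normalising constant of $\widebar{f}_\nu$ and the leading coefficient of the Laguerre polynomial; after this cancellation the surviving $k$-dependent factors assemble into $\left(\frac{\nu}{2\widebar{\mu}_0}\right)^{k}\Gamma\!\left(\ell+k+\frac{\nu}{2}\right)/\Gamma\!\left(k+\frac{\nu}{2}\right)$ multiplied by the hypergeometric factor and by $\widebar{c}_k$, recovering the stated series.

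The main obstacle will be the rigorous justification of the term-by-term integration: because $y^\ell$ grows and the Laguerre series converges only conditionally in general, I would need a dominated-convergence or monotone argument that controls the tail of the $k$-sum uniformly in $y$, exploiting the positivity of the weights $\widebar{\alpha}_i>0$ in the representation~\eqref{rv} and the decay built into the coefficients $\widebar{c}_k$ through the recurrences~\eqref{recurrentck}--\eqref{recurrentdj}. A secondary, purely computational difficulty is the Pochhammer bookkeeping in the reflection step, where the signs $(-1)^m$ and the factorial $(k-m)!$ must be tracked carefully to land on exactly the displayed parameters; a safe way to avoid sign errors is to match the intermediate terminating $_2\mathbf{F}_1$ against the known Laplace--Laguerre integral and apply Pfaff's transformation. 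Finally, validity for every real $\ell\in\mathbb{R}_+$ (rather than only integer $\ell$) follows at once, since the hypergeometric series terminates at $m=k$ owing to the factor $(-k)$ irrespective of $\ell$, while the gamma integral converges whenever $\ell+\frac{\nu}{2}>0$; this is essential because the applications require the half-moment $\ell=\tfrac12$ for volatility swaps.
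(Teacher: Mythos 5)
Your proposal follows the same overall route as the paper: expand $y^\ell\widebar{f}_\nu(y)$ using the Laguerre series \eqref{pdf_rv}, integrate term by term, and identify each term with the summand of \eqref{generalmoment}. The one substantive difference is in how the per-term integral is obtained. The paper simply states that the Laplace-type integral $\int_0^\infty \widebar{F}_{k,\ell}(y)\,\mathrm{d}y$ ``employing the symbolic computing package in MATHEMATICA yields'' the closed form \eqref{intF}, whereas you derive it by hand: insert the terminating series for $\mathbf{L}^{(\nu/2-1)}_k$, reduce each monomial to a gamma integral, and reassemble the resulting finite sum into the $_2\mathbf{F}_1$ via the reflection $m\mapsto k-m$ (which correctly explains why the argument flips from $\nu/(2\widebar{\mu}_0)$ to $2\widebar{\mu}_0/\nu$ and where the parameters $-k$, $1-k-\tfrac{\nu}{2}$, $1-k-\tfrac{\nu}{2}-\ell$ come from). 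This is more self-contained and verifiable than the paper's appeal to symbolic software; if you carry the Pochhammer bookkeeping through, do track the overall $(-1)^k$ produced by the reflection, since it is needed to reconcile \eqref{generalmoment} with the explicit $(-1)^k$ appearing in \eqref{KvolSwap1_1} and \eqref{KvarSwap1_1} when $\widebar{\mu}_0=\nu/2$.

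On the one genuine difficulty you identify --- justifying the interchange of the infinite $k$-sum with the integral over $(0,\infty)$ --- you are actually more careful than the paper, which invokes only ``uniform convergence'' of the series; uniform convergence on an unbounded interval does not by itself license termwise integration. Your suggestion to build a dominated-convergence argument from the geometric-type bounds on $|\widebar{c}_k|$ in Lemma~\ref{lemma_abs_ck} (under the hypotheses $\widebar{\mu}_0\geq\nu/4$ and $\widebar{\beta}$ large enough that $\zeta<1$) is exactly the missing ingredient, and it is essentially what Theorem~\ref{thm_abs_error} supplies for the truncated tails; to make the proof fully rigorous you would still need to write that domination out, but your plan points at the right tool where the paper leaves the gap unacknowledged.
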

\begin{proof}
    We define
    \begin{equation*} \label{F}
    \widebar{F}_{k,\ell}:=\frac{1}{2\widebar{\beta}} \frac{\Gamma(k-1)}{\Gamma \! \left(\frac{\nu}{2}+k\right)} \widebar{c}_k \, \mathbf{L}^{(\frac{\nu}{2}-1)}_k \! \! \left( \frac{\nu y}{4 \widebar{\beta} \widebar{\mu}_0} \right) \ e^{-\frac{y}{2\widebar{\beta}}} \, y^{\frac{\nu}{2}-1+\ell},
    \end{equation*}
    for $k = 0, 1, 2, \ldots, \ell \in \mathbb{R}_+$ where $\widebar{c}_k$\rq s, $\widebar{\beta}$, $\nu$ are given in Theorem~\ref{thm_pdf_rv}. Employing the symbolic computing package in MATHEMATICA yields
    \begin{equation} \label{intF}
        \int^\infty_0 \widebar{F}_{k,\ell} (y) \, \mathrm{d} y = (2\widebar{\beta})^\ell \left(\frac{\nu}{2\widebar{\mu}_0}\right)^k \frac{\Gamma \! \left(\ell+k+\frac{\nu}{2}\right)}{\Gamma \! \left(k+\frac{\nu}{2}\right)}{_2\mathbf{F}_1} \! \! \left(-k,1-k-\frac{\nu}{2};1-k-\frac{\nu}{2}-\ell;\frac{2\widebar{\mu}_0}{\nu}\right)\widebar{c}_k .   
    \end{equation}
    According to the uniformly convergent series $\widebar{f}_n$ derived in Theorem~\ref{thm_pdf_rv}, we obtain that the series $\sum^\infty_{k=0}\widebar{F}_{k,\ell} (y)$ converges uniformly to $y^\ell\widebar{f}_\nu^{(\widebar{\beta},\widebar{\mu}_0)} (y)$ i.e.,
    \begin{equation} \label{Fconf}
        y^\ell\widebar{f}_\nu^{(\widebar{\beta},\widebar{\mu}_0)} (y) = \sum^\infty_{k=0}\widebar{F}_{k,\ell} (y).    
    \end{equation}
    By using the property of uniformly convergent series and applying~\eqref{intF},~\eqref{Fconf} yields
    \begin{align*}
        \mathbb{E}^\mathbb{Q}_{t_1} \big[RV^\ell_d\big] &= \int^\infty_0 y^\ell\widebar{f}_\nu^{(\widebar{\beta},\widebar{\mu}_0)} (y) \, \mathrm{d} y = \sum^\infty_{k=0} \int^\infty_0 \widebar{F}_{k,r} (y) \, \mathrm{d} y \\
        &= (2\widebar{\beta})^\ell \sum^\infty_{k=0} \left(\frac{\nu}{2\widebar{\mu}_0}\right)^k \frac{\Gamma \! \left(\ell+k+\frac{\nu}{2}\right)}{\Gamma \! \left(k+\frac{\nu}{2}\right)}{_2\mathbf{F}_1} \! \! \left(-k,1-k-\frac{\nu}{2};1-k-\frac{\nu}{2}-\ell;\frac{2\widebar{\mu}_0}{\nu}\right)\widebar{c}_k.
    \end{align*}
    This completes the proof.
\end{proof}


\subsection{Approximates for truncation errors of \texorpdfstring{$\mathbb{E} \big[RV^\ell_d\big]$}{}}
\medskip

On a computation of $\mathbb{E} \big[RV^\ell_d\big]$ by machine, we have to estimate the error from substituting an infinite sum with a finite sum which is a method known as truncation error. An approximation for the truncation errors of $\mathbb{E} \big[RV^\ell_d\big]$ will be derived in this subsection by applying the approaches stated in Subsection 3.1 of Casta\~{n}o-Mart\'{i}nez and L\'{o}pez-Bl\'{a}zquez (see~\cite{castano2005distribution}).

we define
\begin{equation*} \label{error}
    \varepsilon_{k_1,k_2}^{(\widebar{\beta}, \widebar{\mu}_0)} (\ell,\nu) = (2\widebar{\beta})^\ell \sum^{k_2}_{k=k_1+1} \left(\frac{\nu}{2\widebar{\mu}_0}\right)^k \frac{\Gamma \! \left(\ell+k+\frac{\nu}{2}\right)}{\Gamma \! \left(k+\frac{\nu}{2}\right)}{_2\mathbf{F}_1} \! \! \left(-k,1-k-\frac{\nu}{2};1-k-\frac{\nu}{2}-\ell;\frac{2\widebar{\mu}_0}{\nu}\right)\widebar{c}_k,
\end{equation*}
for any $k_i = 0, 1, 2,\ldots$, $i=1,2$, such that $k_1+1=k_2$. Thus, a truncation error of order $k$ of $\mathbb{E} \big[RV^\ell_d\big]$ is denoted by $\varepsilon_{k_1,k_2}^{(\widebar{\beta}, \widebar{\mu}_0)} (\ell,\nu)$. To approximate our truncation error, bounds of $\widebar{c}_k$ for $k=0, 1, 2,\ldots$, given in~\eqref{recurrentck}--\eqref{recurrentdj}, can be derived below.
\begin{Lemma} \label{lemma_abs_ck}
    The coefficient $\widebar{c}_k$, $k=0,1,2,\ldots$, satisfies
    \begin{equation} \label{abs_ck}
        |\widebar{c}_k| \leq \zeta^k \left( \frac{2k+\nu}{2k} \right)^k \left( \frac{2k+\nu}{\nu} \right)^{\frac{\nu}{2}} b_0\big(\nu, \widebar{\beta}, \widebar{\mu}_0, \widebar{\delta}, \zeta\big),
    \end{equation}
    for $k=0,1,2,\ldots$, where $\zeta = \max_{i \in \{2,\ldots,N\}} \left| \frac{1-\frac{\widebar{\alpha}_i}{\widebar{\beta}}}{1+\frac{\widebar{\alpha}_i}{\widebar{\beta}}\left( \frac{\nu}{2\widebar{\mu}_0}-1 \right)} \right|$ and
    \begin{equation} \label{b0}
        \widebar{b}_0(\nu, \widebar{\beta}, \widebar{\mu}_0, \widebar{\delta}, \zeta) = \left( \frac{\nu}{2\widebar{\mu}_0} \right)^{\frac{\nu}{2}} \prod^N_{i=2} \left| 1+\frac{\widebar{\alpha}_i}{\widebar{\beta}}\left(\frac{\nu}{2\widebar{\mu}_0}-1\right) \right|^{-\frac{1}{2}}   \exp{ \left( \frac{\widebar{\mu}_0 \widebar{\delta}}{\nu \zeta}-\frac{1}{4} \sum^N_{i=2} \frac{ \widebar{\delta}_i \widebar{\alpha}_i(\nu-2\widebar{\mu}_0)}{\widebar{\beta}+\widebar{\alpha}_i (\nu-2\widebar{\mu}_0)} \right) },
    \end{equation}
    where $N\geq2$ is a positive integer and \ $\widebar{\delta}=\sum^N_{i=2} \widebar{\delta}_i$. In addition, 
    \begin{equation*}
        0<\zeta<1, \text{ if } \widebar{\mu}_0 \geq \frac{\nu}{4} \text{ and } \widebar{\beta}> \frac{1}{2}\left( 2-\frac{\nu}{2 \widebar{\mu}_0} \right) \max_{i \in \{2,\ldots,N\}} \widebar{\alpha}_i.
    \end{equation*}
\end{Lemma}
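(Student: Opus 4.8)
The plan is to recognize the recurrence \eqref{recurrentck}--\eqref{recurrentdj} as the one generated by an exponential generating function, and then to extract coefficient bounds by a Cauchy-type estimate. Concretely, I would first observe that if $C(x):=\sum_{k\geq0}\widebar{c}_k x^k$, then \eqref{recurrentck} says exactly that $x\,C'(x)=C(x)\sum_{j\geq1}\widebar{d}_j x^j$, whence
\begin{equation*}
    C(x)=\widebar{c}_0\exp\!\left(\sum_{j\geq1}\frac{\widebar{d}_j}{j}\,x^j\right).
\end{equation*}
The next step is to sum the two pieces of $\widebar{d}_j$ in \eqref{recurrentdj} in closed form. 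Writing $u_i$ for the common geometric ratio appearing in both sums, so that $|u_i|\le\zeta$ with $\zeta$ as in the statement, the first piece contributes $-\sum_{i=2}^N\frac{\nu_i}{2}\ln(1-u_i x)$, while in the second the explicit factor $j$ cancels against the $1/j$ in the exponent, leaving a rational term $-\sum_{i=2}^N b_i\,\frac{u_i x}{1-u_i x}$ with constants $b_i\ge0$ built from $\widebar{\delta}_i,\widebar{\alpha}_i,\widebar{\beta},\widebar{\mu}_0$. Hence
\begin{equation*}
    C(x)=\widebar{c}_0\prod_{i=2}^N(1-u_i x)^{-\nu_i/2}\,\exp\!\left(-\sum_{i=2}^N b_i\frac{u_i x}{1-u_i x}\right),
\end{equation*}
an analytic function on $|x|<1/\zeta$.

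To bound the coefficients I would pass to a positive majorant: the recurrence together with the triangle inequality yields $|\widebar{c}_k|\le\widetilde{c}_k$, where the $\widetilde{c}_k$ are the nonnegative Taylor coefficients of $G(x):=|\widebar{c}_0|\,(1-\zeta x)^{-\nu/2}\exp\big(M\,\tfrac{\zeta x}{1-\zeta x}\big)$, obtained by replacing each $|u_i|$ by its maximum $\zeta$ and bounding $|\widebar{d}_j|\le\tfrac{\nu}{2}\zeta^j+jM\zeta^j$ for a suitable constant $M\ge0$. Because $G$ has nonnegative coefficients, for every $0<\rho<1/\zeta$ the elementary estimate $\widetilde{c}_k\le G(\rho)\,\rho^{-k}$ holds with no contour integral required. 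Optimizing the principal factor $(1-\zeta\rho)^{-\nu/2}\rho^{-k}$ over $\rho$ gives the stationary point $\rho=\tfrac{2k}{\zeta(2k+\nu)}$, for which $1-\zeta\rho=\tfrac{\nu}{2k+\nu}$ and $\rho^{-k}=\zeta^k\big(\tfrac{2k+\nu}{2k}\big)^k$; substituting reproduces the three explicit factors $\zeta^k\big(\tfrac{2k+\nu}{2k}\big)^k\big(\tfrac{2k+\nu}{\nu}\big)^{\nu/2}$ of \eqref{abs_ck}. The remaining factor — $|\widebar{c}_0|$ together with the value of the exponential correction — is then collected into the constant $\widebar{b}_0$ of \eqref{b0}, with the prefactor $\big(\tfrac{\nu}{2\widebar{\mu}_0}\big)^{\nu/2}\prod_i|1+\cdots|^{-1/2}$ matching $|\widebar{c}_0|$ term by term.

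The hard part is controlling the exponential correction $\exp\big(M\tfrac{\zeta\rho}{1-\zeta\rho}\big)$ uniformly in $k$: at the optimizing radius the naive modulus bound gives $\tfrac{\zeta\rho}{1-\zeta\rho}=\tfrac{2k}{\nu}$, which would spoil the claimed $k$-independence of $\widebar{b}_0$. The resolution is to exploit the \emph{sign} of the second piece of $\widebar{d}_j$: that rational term enters the exponent with a definite negative sign, so near the dominant singularity $x=1/\zeta$ the essential singularity of $\exp(-\,\cdot)$ suppresses rather than amplifies the pole, and the correction can be bounded by the $k$-independent quantity $\exp\big(\tfrac{\widebar{\mu}_0\widebar{\delta}}{\nu\zeta}\big)$ appearing in \eqref{b0}. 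Making this precise — rather than losing the cancellation to the triangle inequality — is the delicate step, and is exactly where the estimates of Casta\~{n}o-Mart\'{i}nez and L\'{o}pez-Bl\'{a}zquez (their Subsection~3.1) are invoked.

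Finally, for the sub-claim $0<\zeta<1$, I would study $u_i=\big(1-\tfrac{\widebar{\alpha}_i}{\widebar{\beta}}\big)\big/\big(1+\tfrac{\widebar{\alpha}_i}{\widebar{\beta}}(\tfrac{\nu}{2\widebar{\mu}_0}-1)\big)$ as a function of $\widebar{\alpha}_i>0$. Positivity of the denominator (so that $\zeta$ is well defined) and the two-sided inequality $-1<u_i<1$ reduce to elementary sign conditions on $\widebar{\alpha}_i,\widebar{\beta},\widebar{\mu}_0,\nu$; the upper bound $u_i<1$ is automatic, whereas $u_i>-1$ is precisely where $\widebar{\mu}_0\ge\tfrac{\nu}{4}$ and $\widebar{\beta}>\tfrac12\big(2-\tfrac{\nu}{2\widebar{\mu}_0}\big)\max_i\widebar{\alpha}_i$ are used. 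Taking the maximum over $i$ then gives $0<\zeta<1$.
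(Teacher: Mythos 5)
Your outline reconstructs, in more detail than the paper itself provides, the machinery behind the cited result: the paper's own proof of this lemma is a one-line substitution ($n=N$, $\alpha_i=\widebar{\alpha}_i$, $\delta_i=\widebar{\delta}_i$, $p=\nu/2$, $\mu_0=\widebar{\mu}_0$) into Lemma~3.1 and Remark~3.1 of Casta\~{n}o-Mart\'{i}nez and L\'{o}pez-Bl\'{a}zquez~\cite{castano2005distribution}, with no independent argument. The parts of your proposal that you actually carry out are correct: the equivalence of \eqref{recurrentck} with $C(x)=\widebar{c}_0\exp\bigl(\sum_{j\ge1}\widebar{d}_jx^j/j\bigr)$; the observation that both sums in \eqref{recurrentdj} are powers of the single ratio $u_i$ entering $\zeta$ (modulo a sign typo in the denominator of the first sum of \eqref{recurrentdj}, which you silently corrected); the stationary point $\rho=2k/(\zeta(2k+\nu))$ reproducing the factors $\zeta^k\bigl(\frac{2k+\nu}{2k}\bigr)^k\bigl(\frac{2k+\nu}{\nu}\bigr)^{\nu/2}$; and the verification of $0<\zeta<1$, where your reduction of $u_i>-1$ to $\widebar{\beta}>\frac{1}{2}\bigl(2-\frac{\nu}{2\widebar{\mu}_0}\bigr)\widebar{\alpha}_i$ is a genuine replacement for the paper's bare appeal to Remark~3.1.

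The gap is the one you flag, and your own outline cannot close it because two of its steps work against each other. You first form the majorant by the triangle inequality, replacing $|\widebar{d}_j|$ by $\frac{\nu}{2}\zeta^j+jM\zeta^j$ with $M\ge0$; the resulting $G(x)=|\widebar{c}_0|(1-\zeta x)^{-\nu/2}\exp\bigl(M\zeta x/(1-\zeta x)\bigr)$ has $k$-th Taylor coefficient $|\widebar{c}_0|\,\zeta^k\,\mathbf{L}^{(\nu/2-1)}_k(-M)$, and for fixed $M>0$ the Laguerre values at a \emph{negative} argument grow like $e^{c\sqrt{k}}$, which outruns the polynomial factor $\bigl(\frac{2k+\nu}{2k}\bigr)^k\bigl(\frac{2k+\nu}{\nu}\bigr)^{\nu/2}\asymp k^{\nu/2}$ for every fixed constant. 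So once the triangle inequality has been applied, the bound \eqref{abs_ck} with a $k$-independent $\widebar{b}_0$ is no longer recoverable by any choice of radius (your saddle point is worse still, since $\zeta\rho/(1-\zeta\rho)=2k/\nu$ there). You then propose to rescue the constant by exploiting the sign of the second piece of $\widebar{d}_j$ --- which is indeed the right phenomenon, since the exact generating function is $\widebar{c}_0\prod_i(1-u_ix)^{-\nu_i/2}\exp\bigl(-B_ix/(1-u_ix)\bigr)$ with $B_i\ge0$, and Laguerre polynomials at a \emph{positive} argument obey the $k$-uniform bound $|\mathbf{L}^{(a)}_k(y)|\le\binom{k+a}{k}e^{y/2}$ --- but that sign information is exactly what the majorant step has already discarded. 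The two halves of your argument are therefore incompatible as written: either you keep the signed generating function and must redo the coefficient estimate without a positive majorant (which is what Casta\~{n}o-Mart\'{i}nez and L\'{o}pez-Bl\'{a}zquez do, via a direct induction on the recurrence with a $j$-dependent bound on $\widebar{d}_j$), or you use the majorant and lose the theorem. Deferring precisely this step back to their Subsection~3.1 means the crux of \eqref{abs_ck}--\eqref{b0} --- in particular the provenance of the $k$-independent factor $\exp\bigl(\widebar{\mu}_0\widebar{\delta}/(\nu\zeta)\bigr)$ --- remains unproven in your write-up, though in fairness it is equally unproven in the paper.
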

\begin{proof}
    We will derive bounds of coefficient $\widebar{c}_k$, $k=0,1,2,\ldots$, by applying the presented approach in Lemma 3.1 written by Casta\~{n}o-Mart\'{i}nez and L\'{o}pez-Bl\'{a}zquez (see~\cite{castano2005distribution}) as follows. The constants $n, \alpha_i, \delta_i, p$ and parameter $\beta_i$, and $\mu_0$, are placed by the following: $n=N, \alpha_i=\widebar{\alpha}_i, \delta_i=\widebar{\delta}_i, p=\nu/2, \beta=\widebar{\beta}_i, \mu_0=\widebar{\mu}_0$. Then, by applying Lemma 3.1, bounds of coefficient $\widebar{c}_k$, $k=0,1,2,\ldots$, can be constructed as shown in~\eqref{abs_ck} where the mention at the end can be derived by using Remark 3.1 given by Casta\~{n}o-Mart\'{i}nez and L\'{o}pez-Bl\'{a}zquez~\cite{castano2005distribution}.
\end{proof}

According to Lemma~\ref{lemma_abs_ck}, we further define
\begin{equation} \label{bound}
    B^{(\widebar{\beta}, \widebar{\mu}_0)}_{k_1,k_2} (\ell, \nu;\zeta) := (2\widebar{\beta})^\ell \, \widebar{b}_0\big(\nu, \widebar{\beta}, \widebar{\mu}_0, \widebar{\delta}, \zeta\big) \sum^{k_2}_{i=k_1+1} \widebar{p}_k \big(\ell, \nu, \widebar{\mu}_0, \zeta\big),
\end{equation}
for all $k_i=0,1,2,\ldots$, $i=1,2$, such that $k_1+1=k_2$ and $\zeta>0$, where
\begin{equation} \label{pk}
    \widebar{p}_k\big(\ell, \nu, \widebar{\mu}_0, \zeta\big) = \left(\frac{\nu}{2\widebar{\mu}_0}\right)^k \frac{\Gamma \! \left(\ell+k+\frac{\nu}{2}\right)}{\Gamma \! \left(k+\frac{\nu}{2}\right)}\left| {_2\mathbf{F}_1} \! \! \left(-k,1-k-\frac{\nu}{2};1-k-\frac{\nu}{2}-\ell;\frac{2\widebar{\mu}_0}{\nu}\right) \right| \left( \frac{2k+\nu}{2k} \right)^k \left( \frac{2k+\nu}{\nu} \right)^{\frac{\nu}{2}} \zeta^k.
\end{equation}
Due to Lemma~\ref{lemma_abs_ck}, our bound for truncation error can be derived as the following theorem.
\begin{Theorem} \label{thm_abs_error}
    Suppose that $\widebar{\mu}_0 \geq \frac{\nu}{4}$ and $\widebar{\beta}> \frac{1}{2}\left( 2-\frac{\nu}{2 \widebar{\mu}_0} \right) \max_{i \in \{2,\ldots,N\}} \widebar{\alpha}_i$. Then
    \begin{equation} \label{abs_error}
    \left| \varepsilon_{K,\infty}^{(\widebar{\beta}, \widebar{\mu}_0)} (\ell,\nu) \right| \leq B^{(\widebar{\beta}, \widebar{\mu}_0)}_{K,\infty} (\ell, \nu;\zeta),
    \end{equation}
    for all $K \in \mathbb{Z}_+$ and $\zeta = \max_{i \in \{2,\ldots,N\}} \left| \frac{1-\frac{\widebar{\alpha}_i}{\widebar{\beta}}}{1+\frac{\widebar{\alpha}_i}{\widebar{\beta}}\left( \frac{\nu}{2\widebar{\mu}_0}-1 \right)} \right|$.
\end{Theorem}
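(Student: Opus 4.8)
The plan is to derive the bound by a direct term-by-term application of the triangle inequality to the tail series defining $\varepsilon_{K,\infty}^{(\widebar{\beta}, \widebar{\mu}_0)} (\ell,\nu)$, followed by insertion of the pointwise estimate on $|\widebar{c}_k|$ supplied by Lemma~\ref{lemma_abs_ck}. Concretely, I would first read off $\varepsilon_{K,\infty}^{(\widebar{\beta}, \widebar{\mu}_0)} (\ell,\nu)$ as the tail $\sum_{k=K+1}^{\infty}$ of the generic summand appearing in the moment formula~\eqref{generalmoment}, and then pass the absolute value inside the sum. The essential content of the theorem is really a repackaging of Lemma~\ref{lemma_abs_ck}, so the argument is mechanical once that lemma is in hand.

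The second step is a positivity check that makes the absolute values distribute cleanly. Under the standing hypotheses $\widebar{\beta}>0$, $\nu>0$, $\widebar{\mu}_0>0$ and $\ell\in\mathbb{R}_+$, each prefactor $(2\widebar{\beta})^\ell$, $\left(\frac{\nu}{2\widebar{\mu}_0}\right)^k$, and the gamma quotient $\frac{\Gamma(\ell+k+\frac{\nu}{2})}{\Gamma(k+\frac{\nu}{2})}$ is strictly positive, so taking moduli affects only the hypergeometric factor ${_2\mathbf{F}_1}$ and the coefficient $\widebar{c}_k$. This is precisely what is needed so that, after bounding $|\widebar{c}_k|$, the surviving factors reassemble into $\widebar{p}_k(\ell,\nu,\widebar{\mu}_0,\zeta)$ from~\eqref{pk} times the $k$-independent constant $(2\widebar{\beta})^\ell\,\widebar{b}_0(\nu,\widebar{\beta},\widebar{\mu}_0,\widebar{\delta},\zeta)$.

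The third step substitutes the Lemma~\ref{lemma_abs_ck} estimate
\[
|\widebar{c}_k| \leq \zeta^k \left( \frac{2k+\nu}{2k} \right)^k \left( \frac{2k+\nu}{\nu} \right)^{\frac{\nu}{2}} \widebar{b}_0
\]
into each summand, factors out $(2\widebar{\beta})^\ell\,\widebar{b}_0$, and identifies the remaining $k$-dependent product as exactly $\widebar{p}_k$; summing over $k=K+1,\dots,\infty$ then reproduces $B^{(\widebar{\beta}, \widebar{\mu}_0)}_{K,\infty}(\ell,\nu;\zeta)$ as defined in~\eqref{bound}. The two hypotheses $\widebar{\mu}_0 \geq \frac{\nu}{4}$ and $\widebar{\beta}> \frac{1}{2}\big(2-\frac{\nu}{2\widebar{\mu}_0}\big)\max_{i}\widebar{\alpha}_i$ enter only through the final assertion of Lemma~\ref{lemma_abs_ck}, which guarantees $0<\zeta<1$; this is what renders the majorant tail $\sum_{k>K}\widebar{p}_k$ summable and the bound finite.

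I expect the one genuine obstacle to be the convergence/interchange justification underlying that last step: although $\zeta^k$ decays geometrically, it must dominate the competing growth from $\left(\frac{\nu}{2\widebar{\mu}_0}\right)^k$, the polynomially growing gamma ratio and $\left(\frac{2k+\nu}{\nu}\right)^{\nu/2}$, the factor $\left(\frac{2k+\nu}{2k}\right)^k\to e^{\nu/2}$, and the terminating hypergeometric polynomial ${_2\mathbf{F}_1}\big(-k,\dots;\dots;\frac{2\widebar{\mu}_0}{\nu}\big)$. To certify $\sum_{k>K}\widebar{p}_k<\infty$ under $0<\zeta<1$, and thereby legitimize both the termwise triangle inequality and the finiteness of the bound, I would lean on the convergence analysis of Subsection~3.1 of Casta\~{n}o-Mart\'{i}nez and L\'{o}pez-Bl\'{a}zquez~\cite{castano2005distribution}, mirroring their treatment; the rest reduces to the elementary estimate described above.
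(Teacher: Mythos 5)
Your proposal is correct and follows essentially the same route as the paper: the paper's own proof is a one-line instruction to combine the moment formula \eqref{generalmoment} with the definitions \eqref{error}--\eqref{pk}, which unpacks to exactly your term-by-term triangle inequality plus the Lemma~\ref{lemma_abs_ck} bound on $|\widebar{c}_k|$, reassembled into $(2\widebar{\beta})^\ell\,\widebar{b}_0\sum_{k>K}\widebar{p}_k$. Your additional remarks on where the hypotheses enter (only via $0<\zeta<1$) and on the summability of the majorant go beyond what the paper writes down and are a welcome clarification rather than a deviation.
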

\begin{proof}
    To proceed~\eqref{abs_error}, we utilize~\eqref{generalmoment} and~\eqref{error}--\eqref{pk}.
\end{proof}


\section{Pricing volatility and variance swaps} \label{sec_pricing_swaps}
\medskip

By applying Theorem~\ref{thm_pdf_rv} which introduces the PDF of $RV_d$, the fair strike price of volatility and variance swaps, defined in~\eqref{def_kvol} and~\eqref{def_kvar} respectively, can be expressed as
\begin{equation} \label{int_kvar}
    \widebar{K}_{var} =\int^\infty_0 y\widebar{f}^{(\widebar{\beta}, \widebar{\mu}_0)}_n (y) \, \mathrm{d} y,
\end{equation}
and
\begin{equation} \label{int_kvol}
    \widebar{K}_{vol} =\int^\infty_0 y\widebar{f}^{(\widebar{\beta}, \widebar{\mu}_0)}_n (\sqrt{y}) \, \mathrm{d} y,
\end{equation}
respectively.

Although $\widebar{K}_{var}$  and $\widebar{K}_{vol}$ can be enumerated by using several techniques of numerical integration for both improper integrals above ,\eqref{int_kvar} and~\eqref{int_kvol}, those numerical techniques can lead to complications. To avoid those complications, we simplify the integral into a series of an infinite summation of a generalized hypergeometric function. It is obtained by analyzing the conditional expectation order $\ell$ of $RV_d$ known as a raw moment proposed in Theorem~\ref{thm_moment}. 

In this section, the fair strike price of a volatility swap and variance swap is derived by setting $\ell=1/2$ and $\ell=1$ in~\eqref{generalmoment}, respectively. Additionally, we analyze the closed-form formulas for the fair strike price of both swaps in two scenarios, depending on the variance of the log-return: one where the variance changes over time, and another where it remains constant.


\subsection{Pricing formulas for volatility and variance swaps: Time-varying log-return volatility} \label{sec_K_vary}
\medskip 

Due to the solution of the SDE in~\eqref{sde_xt}, $X_{t_i}$, for $i = 2,3, \ldots, N$, is a normally distributed random variable with time-varying log-return volatility. The difference between $X_{t_i}$ and $X_{t_{i-1}}$, denoted as $\widebar{Z}_i$ in~\eqref{z}, for $i = 2,3,\ldots,N$, also follows a normal distribution with mean given by~\eqref{mean_x} and variance given by~\eqref{variance_x}. The covariance of this difference is provided by~\eqref{cov_x}.

In this subsection, these time-varying log-return volatility is used to analyze the explicit formulas for volatility and variance swaps, where $RV_d$ is considered as a linear combination of independent noncentral chi-square random variables with weighted parameters $\widebar{\alpha}_i>0$ for $i = 2, 3,\ldots, N$. 


\subsubsection{Volatility swaps price: \texorpdfstring{$K^1_{vol}$}{}}
\medskip

By using the probabilistic approach based on the PDF~\ref{pdf_rv}, the following theorem provides a closed-form formula for pricing a volatility swap under time-varying log-return volatility. The corresponding result for the case of constant log-return volatility is presented in the subsequent corollary.

\begin{Theorem} \label{thm_Kvol1_1}
    According to Theorem~\ref{thm_moment}, suppose that $\widebar{\mu}_0=\frac{\nu}{2}$ and $\widebar{\beta}>\frac{1}{2}\max_{i \in \{2,\ldots,N\}} \widebar{\alpha}_i$, and we choose $\ell=\frac{1}{2}$. The fair strike price of a volatility swap can be shown as
    \begin{equation} \label{KvolSwap1_1}
        K^1_{vol_1} \equiv \widebar{K}^{\left(\widebar{\beta}, \frac{\nu}{2}\right)}_{vol_1} (\nu,T;\widebar{\sigma}_i)= \sqrt{2\widebar{\beta}} \, \sum^\infty_{k=0} \left(-1\right)^k \frac{\Gamma \! \left(k+\frac{\nu+1}{2}\right)}{\Gamma \! \left(k+\frac{\nu}{2}\right)}{_2\mathbf{F}_1} \! \left(-k,1-k-\frac{\nu}{2};k-\frac{\nu+1}{2};1\right)\widebar{c}_k,
    \end{equation}
    where the coefficients $\widebar{c}_k \equiv \widebar{c}_k \big(\nu,\widebar{\sigma}_i,\widebar{\mu}_i;\widebar{\beta},\frac{\nu}{2}\big), k=0,1,2,\ldots$ are calculated by using~\eqref{recurrentck}--\eqref{recurrentdj} and the parameters $\widebar{\sigma}_i,\widebar{\mu}_i,\nu$ and $\widebar{\alpha}_i$, $i=2,3,\ldots,N$, are defined in~\eqref{mean_z},~\eqref{variance_z},~\eqref{degree_of_freedom} and~\eqref{alpha_i} respectively.
\end{Theorem}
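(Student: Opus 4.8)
The plan is to recognize that the volatility-swap strike is nothing but the half-moment of the realized variance and then read it off from the general moment formula already in hand. From the definition~\eqref{def_kvol} we have $K^1_{vol_1}=\widebar{K}_{vol}=\mathbb{E}^{\mathbb{Q}}_{t_1}\big[\sqrt{RV_d}\big]=\mathbb{E}^{\mathbb{Q}}_{t_1}\big[RV_d^{1/2}\big]$, so the strike is exactly the raw moment $\mathbb{E}^{\mathbb{Q}}_{t_1}[RV_d^\ell]$ of Theorem~\ref{thm_moment} evaluated at the non-integer order $\ell=\tfrac12$. This is the decisive observation: the square-root payoff forces a half-integer moment, which is precisely why the general formula~\eqref{generalmoment}, valid for all $\ell\in\mathbb{R}_+$, is needed rather than the integer-moment results of the earlier literature.

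First I would substitute $\ell=\tfrac12$ together with the admissible free parameter $\widebar{\mu}_0=\tfrac{\nu}{2}$ directly into~\eqref{generalmoment}. Under this choice the geometric factor collapses, since $\tfrac{\nu}{2\widebar{\mu}_0}=1$ forces $\big(\tfrac{\nu}{2\widebar{\mu}_0}\big)^k=1$, and the argument of the hypergeometric factor becomes $\tfrac{2\widebar{\mu}_0}{\nu}=1$. Simultaneously the prefactor reduces to $(2\widebar{\beta})^{1/2}=\sqrt{2\widebar{\beta}}$ and the gamma quotient becomes $\Gamma\!\big(k+\tfrac{\nu+1}{2}\big)/\Gamma\!\big(k+\tfrac{\nu}{2}\big)$, which already match the displayed prefactor and gamma ratio in~\eqref{KvolSwap1_1}.

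Next I would recast the resulting terminating Gauss function ${_2\mathbf{F}_1}\big(-k,1-k-\tfrac{\nu}{2};1-k-\tfrac{\nu}{2}-\tfrac12;1\big)$ into the displayed form, and this is where the factor $(-1)^k$ and the shifted third parameter $k-\tfrac{\nu+1}{2}$ should emerge. I expect this hypergeometric manipulation to be the main obstacle. The specialization $\widebar{\mu}_0=\tfrac{\nu}{2}$ pushes the argument to $z=1$, where the standard Euler, Pfaff, and Gauss-summation identities are only conditionally valid (here $c-a-b=k-\tfrac12$ straddles the borderline and the companion series need not converge at $z=1$); consequently the reduction has to be effected carefully at the level of the finite sum itself, by reversing the order of summation through $(-k)_m=(-1)^m k!/(k-m)!$, rather than by quoting a $z=1$ transformation blindly.

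Finally I would close the argument by verifying convergence. The hypotheses $\widebar{\mu}_0=\tfrac{\nu}{2}\ge\tfrac{\nu}{4}$ and $\widebar{\beta}>\tfrac12\max_{i\in\{2,\ldots,N\}}\widebar{\alpha}_i$ are exactly the conditions of Lemma~\ref{lemma_abs_ck} specialized to $\widebar{\mu}_0=\tfrac{\nu}{2}$ (for which $2-\tfrac{\nu}{2\widebar{\mu}_0}=1$), so they guarantee $0<\zeta<1$; the truncation bound of Theorem~\ref{thm_abs_error} then forces the tail $B^{(\widebar{\beta},\nu/2)}_{K,\infty}\to0$, showing the series in~\eqref{KvolSwap1_1} converges absolutely and that the half-moment $\mathbb{E}^{\mathbb{Q}}_{t_1}[RV_d^{1/2}]$ is finite, which in turn legitimizes the term-by-term integration underlying~\eqref{generalmoment} at $\ell=\tfrac12$. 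Assembling the simplified prefactor, the gamma ratio, the transformed hypergeometric factor, and the coefficients $\widebar{c}_k$ of Theorem~\ref{thm_pdf_rv} then yields~\eqref{KvolSwap1_1}.
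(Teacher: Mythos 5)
Your proposal follows exactly the paper's route: the paper's proof of Theorem~\ref{thm_Kvol1_1} consists solely of substituting $\ell=\tfrac12$ and $\widebar{\mu}_0=\tfrac{\nu}{2}$ into the general moment formula~\eqref{generalmoment} and reading off~\eqref{KvolSwap1_1}. You go further than the paper by explicitly noting that the raw substitution yields the terminating function ${_2\mathbf{F}_1}\bigl(-k,1-k-\tfrac{\nu}{2};\tfrac12-k-\tfrac{\nu}{2};1\bigr)$ rather than the displayed $(-1)^k\,{_2\mathbf{F}_1}\bigl(-k,1-k-\tfrac{\nu}{2};k-\tfrac{\nu+1}{2};1\bigr)$, so that a rearrangement of the finite sum is still required, and by supplying the convergence justification via Lemma~\ref{lemma_abs_ck} and Theorem~\ref{thm_abs_error} --- both points the paper passes over with ``we then immediately obtain''.
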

\begin{proof}
    We set $\widebar{\mu}_0=\nu/2$, $\widebar{\beta}>(1/2)\max_{i \in \{2,\ldots,N\}} \widebar{\alpha}_i$, and $\ell=1/2$ in~\eqref{generalmoment}; we then immediately obtain~\eqref{KvolSwap1_1}.
\end{proof}

\begin{Corollary} \label{col_Kvol1_2}
    According to Theorem~\ref{thm_Kvol1_1}, assume that $\widebar{\sigma}_2=\widebar{\sigma}_3=\ldots=\widebar{\sigma}_N=c$, the fair strike price of a volatility swap can be shown as
    \begin{equation} \label{KvolSwap1_2}
        K^1_{vol_2} \equiv \widebar{K}^{\left(\widebar{\beta}, \frac{\nu}{2}\right)}_{vol_2} (\nu,T;c)= \sqrt{\frac{2c}{T}} \, \frac{\Gamma \! \left(\frac{\nu+1}{2}\right)}{\Gamma \! \left(\frac{\nu}{2}\right)} \times 100,
    \end{equation}
    where $c>0$ can be arbitrarily chosen and $\nu > 0$ is defined in~\eqref{degree_of_freedom}.
\end{Corollary}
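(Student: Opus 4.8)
The plan is to specialize the series formula~\eqref{KvolSwap1_1} of Theorem~\ref{thm_Kvol1_1} to the constant-variance regime and to exploit the freedom in the parameter $\widebar{\beta}$ in order to collapse the infinite sum to its leading term. Since Theorem~\ref{thm_Kvol1_1} already fixes $\widebar{\mu}_0=\tfrac{\nu}{2}$ and leaves $\widebar{\beta}$ free (subject only to $\widebar{\beta}>\tfrac12\max_{i}\widebar{\alpha}_i$), and since the value of~\eqref{KvolSwap1_1} does not depend on the admissible choice of $\widebar{\beta}$, I would first observe that the hypothesis $\widebar{\sigma}_2=\cdots=\widebar{\sigma}_N=c$ forces, via~\eqref{alpha_i}, all the weights $\widebar{\alpha}_i$ to coincide with a single common value $\widebar{\alpha}$. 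I would then make the convenient admissible choice $\widebar{\beta}=\widebar{\alpha}$, which respects the constraint because $\widebar{\alpha}>\tfrac12\widebar{\alpha}$.

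The key step is to show that, under $\widebar{\mu}_0=\tfrac{\nu}{2}$ together with $\widebar{\beta}=\widebar{\alpha}$, the recurrence~\eqref{recurrentck}--\eqref{recurrentdj} degenerates to $\widebar{c}_0=1$ and $\widebar{c}_k=0$ for all $k\geq1$. Substituting $\widebar{\mu}_0=\tfrac{\nu}{2}$ into~\eqref{recurrentc0} makes every factor equal to one: the prefactor $(\nu/2\widebar{\mu}_0)^{\nu/2}=1$, the product collapses because $\tfrac{\nu}{2\widebar{\mu}_0}-1=0$, and the exponential argument vanishes because $\nu-2\widebar{\mu}_0=0$, so $\widebar{c}_0=1$. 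In~\eqref{recurrentdj} the same substitution reduces the first sum to $\sum_{i}\tfrac{\nu_i}{2}\bigl(1-\widebar{\alpha}_i/\widebar{\beta}\bigr)^{j}$, which vanishes once $\widebar{\beta}=\widebar{\alpha}=\widebar{\alpha}_i$; the remaining sum carries the factor $(\widebar{\beta}-\widebar{\alpha}_i)^{j-1}$, which also vanishes for $j\geq2$, while the $j=1$ contribution reduces to a multiple of the total noncentrality $\widebar{\delta}=\sum_{i=2}^{N}\widebar{\delta}_i$. Granting that this last contribution is absent, one gets $\widebar{d}_j=0$ for every $j\geq1$, and~\eqref{recurrentck} propagates this to $\widebar{c}_k=0$ for all $k\geq1$.

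With only the $k=0$ term surviving in~\eqref{KvolSwap1_1}, I would finish by evaluating that term. Its hypergeometric factor is ${_2\mathbf{F}_1}\bigl(0,1-\tfrac{\nu}{2};-\tfrac{\nu+1}{2};1\bigr)=1$, since the first parameter $-k=0$ truncates the defining series~\eqref{hyper_fun} after its constant term, leaving $K^1_{vol_2}=\sqrt{2\widebar{\beta}}\,\Gamma\!\bigl(\tfrac{\nu+1}{2}\bigr)/\Gamma\!\bigl(\tfrac{\nu}{2}\bigr)$. Inserting the common value $\widebar{\beta}=\widebar{\alpha}=\tfrac{100^2}{T}c$ then gives $\sqrt{2\widebar{\beta}}=100\sqrt{2c/T}$ and hence the claimed formula~\eqref{KvolSwap1_2}.

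I expect the delicate point to be the vanishing of the $j=1$ term of~\eqref{recurrentdj}: the noncentral piece is not removed by the choice $\widebar{\beta}=\widebar{\alpha}$ alone, so one must use that the constant-volatility scenario carries zero-mean log-returns (equivalently $\widebar{\delta}_i=0$ by~\eqref{noncentrality}), which is exactly what turns $RV_d/\widebar{\alpha}$ into a central $\chi^2_\nu$ variable. A useful independent check confirming the result is the direct probabilistic computation: with equal weights, $RV_d=\widebar{\alpha}\sum_{i=2}^{N}\widebar{Y}_i$ by~\eqref{rv}, and in the central case $\sum_{i=2}^{N}\widebar{Y}_i\sim\chi^2_\nu$, so that $\mathbb{E}^{\mathbb{Q}}_{t_1}\!\bigl[\sqrt{RV_d}\bigr]=\sqrt{\widebar{\alpha}}\,\sqrt{2}\,\Gamma\!\bigl(\tfrac{\nu+1}{2}\bigr)/\Gamma\!\bigl(\tfrac{\nu}{2}\bigr)$, which reproduces~\eqref{KvolSwap1_2}.
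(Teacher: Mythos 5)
Your overall route is the same as the paper's: under the constant-variance hypothesis all the weights $\widebar{\alpha}_i$ coincide, one picks the admissible value $\widebar{\beta}=\widebar{\alpha}_i$ together with $\widebar{\mu}_0=\nu/2$, argues that the recurrence~\eqref{recurrentck}--\eqref{recurrentdj} collapses to $\widebar{c}_0=1$ and $\widebar{c}_k=0$ for $k\geq1$, and evaluates the surviving $k=0$ term of~\eqref{KvolSwap1_1} using ${_2\mathbf{F}_1}(0,\,\cdot\,;\,\cdot\,;1)=1$. That is exactly the paper's argument, and your evaluation of the leading term is correct.

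The problem is the step you yourself single out as delicate. With $\widebar{\mu}_0=\nu/2$ and $\widebar{\beta}=\widebar{\alpha}_i$ the first sum in~\eqref{recurrentdj} vanishes for every $j$ and the second vanishes for $j\geq2$, but for $j=1$ the factor $(\widebar{\beta}-\widebar{\alpha}_i)^{j-1}$ equals $1$ and one is left with $\widebar{d}_1=-\tfrac{1}{2\widebar{\beta}}\sum_i\widebar{\delta}_i\widebar{\alpha}_i$. You correctly note that this survives unless all $\widebar{\delta}_i=0$, but your claim that the constant-volatility scenario ``carries zero-mean log-returns'' is not justified by the model: from~\eqref{mean_x}, $\widebar{\mu}_i=\bigl(e^{-\kappa t_{i-1}}-e^{-\kappa t_i}\bigr)(\alpha-x_0)$, which is nonzero whenever $x_0\neq\alpha$ and $\kappa>0$, and equalizing the $\widebar{\sigma}_i$ says nothing about the $\widebar{\mu}_i$. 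So your argument (and, to be fair, the paper's own proof, which simply asserts $\widebar{d}_j=0$ for all $j$) actually requires the additional hypothesis $\sum_i\widebar{\mu}_i^2=0$ --- the same hypothesis that separates Theorem~\ref{thm_Kvol2_2} from Theorem~\ref{thm_Kvol2_1} --- for the series to truncate and for the central-$\chi^2_\nu$ closed form~\eqref{KvolSwap1_2} to be exact. A smaller point: with $\widebar{\sigma}_i=c$ as stated, \eqref{alpha_i} gives $\widebar{\beta}=\widebar{\alpha}_i=100^2c^2/T$ and hence $\sqrt{2\widebar{\beta}}=100\,c\sqrt{2/T}$ rather than $100\sqrt{2c/T}$; your substitution $\widebar{\alpha}=100^2c/T$ tacitly reads $c$ as the common variance rather than the common standard deviation, which matches the displayed formula but not the stated hypothesis.
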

\begin{proof}
    According to the proof of Theorem~\ref{thm_Kvol1_1}, since we set $\widebar{\sigma}^2_i=c^2$, we choose $\widebar{\beta}=(100^2/T)\,c^2$ for all $i=2,3,\ldots,N$. From~\eqref{recurrentdj} and~\eqref{recurrentck}, we then obtain that $\widebar{d_j}=0$ for all $j=1,2,3\ldots$, and $\widebar{c}_k=0$ for all $k=1,2,3,\ldots$. This means that $\widebar{K}^1_{vol_2}$ exists only if $k=0$. Obviously, we have $\widebar{c_k}=1$ and ${_2\mathbf{F}_1} \! \left(-k,1-k-\nu/2;k-(\nu+1)/2;1\right)=1$ when $k=0$ for all $\nu>0$ by~\eqref{recurrentc0} and~\eqref{hyper_fun} respectively. Then we immediately obtain~\eqref{KvolSwap1_2}.
\end{proof}


\subsubsection{Variance swaps price: \texorpdfstring{$K^1_{var}$}{}}
\medskip

Closed-form formulas for pricing variance swaps under both time-varying and constant log-return volatility are presented in the following theorem and corollary.

\begin{Theorem} \label{thm_Kvar1_1}
    According to Theorem~\ref{thm_moment}, suppose that $\widebar{\mu}_0=\frac{\nu}{2}$ and $\widebar{\beta}>\frac{1}{2}\max_{i \in \{2,\ldots,N\}} \widebar{\alpha}_i$, and we choose $\ell=1$. The fair strike price of a variance swap can be shown as
    \begin{equation} \label{KvarSwap1_1}
        K^1_{var_1} \equiv \widebar{K}^{\left(\widebar{\beta}, \frac{\nu}{2}\right)}_{var_1} (\nu,T;\widebar{\sigma}_i)= 2\widebar{\beta} \sum_{k=0}^{\infty}(-1)^k\frac{\Gamma \! \left(1+k+\frac{\nu}{2}\right)}{\Gamma \! \left(k+\frac{\nu}{2}\right)}{_2\mathbf{F}_1} \! \left(-k,1-k-\frac{\nu}{2};-k-\frac{\nu}{2};1\right)\widebar{c}_k,
    \end{equation}
    where the coefficients $\widebar{c}_k \equiv \widebar{c}_k \big(\nu,\widebar{\sigma}_i,\widebar{\mu}_i;\widebar{\beta},\frac{\nu}{2}\big), k=0,1,2,\ldots$ are calculated by using~\eqref{recurrentck}--\eqref{recurrentdj} and the parameters $\widebar{\sigma}_i,\widebar{\mu}_i,\nu$ and $\widebar{\alpha}_i$, $i=2,3,\ldots,N$, are defined in~\eqref{mean_z},~\eqref{variance_z},~\eqref{degree_of_freedom} and~\eqref{alpha_i} respectively.
\end{Theorem}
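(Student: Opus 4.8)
The plan is to read off \eqref{KvarSwap1_1} as the single specialization $\ell=1$ of the general raw-moment expansion \eqref{generalmoment} of Theorem~\ref{thm_moment}, in complete analogy with the volatility-swap argument of Theorem~\ref{thm_Kvol1_1}. The starting observation is that, by the definition of the fair variance strike in \eqref{def_kvar}, $\widebar{K}_{var}=\mathbb{E}^{\mathbb{Q}}_{t_1}[RV_d]=\mathbb{E}^{\mathbb{Q}}_{t_1}[RV_d^{\ell}]$ with $\ell=1$; hence no new integration is required and the whole task reduces to evaluating \eqref{generalmoment} at this one value of $\ell$.

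First I would substitute $\ell=1$ together with the admissible free-parameter choice $\widebar{\mu}_0=\tfrac{\nu}{2}$ into \eqref{generalmoment}. This choice is the crucial simplification: it reduces the factor $\left(\tfrac{\nu}{2\widebar{\mu}_0}\right)^{k}$ to $1$ and makes the hypergeometric argument $\tfrac{2\widebar{\mu}_0}{\nu}$ equal to $1$. The prefactor $(2\widebar{\beta})^{\ell}$ becomes $2\widebar{\beta}$, the gamma quotient becomes $\Gamma\!\left(1+k+\tfrac{\nu}{2}\right)/\Gamma\!\left(k+\tfrac{\nu}{2}\right)$, and the third lower parameter $1-k-\tfrac{\nu}{2}-\ell$ collapses to $-k-\tfrac{\nu}{2}$, leaving ${_2\mathbf{F}_1}\!\left(-k,1-k-\tfrac{\nu}{2};-k-\tfrac{\nu}{2};1\right)$. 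Because the top parameter $-k$ is a non-positive integer the series terminates, so I would then apply a standard reversal / Chu--Vandermonde-type identity for a terminating ${_2\mathbf{F}_1}$ at unit argument to put the generic term into the displayed form; this rewriting is what carries the alternating factor $(-1)^{k}$ appearing in \eqref{KvarSwap1_1}.

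Finally I would confirm the validity of the resulting series. Under $\widebar{\mu}_0=\tfrac{\nu}{2}$ one has $\tfrac{\nu}{2\widebar{\mu}_0}=1$, so the standing hypothesis $\widebar{\beta}>\tfrac{1}{2}\max_{i\in\{2,\ldots,N\}}\widebar{\alpha}_i$ is exactly the condition of Lemma~\ref{lemma_abs_ck} (there $\tfrac{1}{2}\left(2-\tfrac{\nu}{2\widebar{\mu}_0}\right)\max_{i}\widebar{\alpha}_i$ reduces to $\tfrac{1}{2}\max_{i}\widebar{\alpha}_i$), which forces $\zeta\in(0,1)$ and hence $B^{(\widebar{\beta},\widebar{\mu}_0)}_{K,\infty}\to 0$ by Theorem~\ref{thm_abs_error}; the termwise interchange of expectation and summation is already furnished by the uniform convergence proved in Theorem~\ref{thm_moment}. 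The only genuinely delicate point is the hypergeometric bookkeeping of the second step --- matching the specialized terminating ${_2\mathbf{F}_1}$ at argument $1$ to the displayed expression, sign included. Once that identity is verified the formula \eqref{KvarSwap1_1} drops out at once, mirroring the one-line specialization used for Theorem~\ref{thm_Kvol1_1}.
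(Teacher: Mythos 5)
Your proposal follows exactly the paper's own (one-line) proof: the formula \eqref{KvarSwap1_1} is obtained by specializing the general raw-moment expansion \eqref{generalmoment} of Theorem~\ref{thm_moment} at $\ell=1$ and $\widebar{\mu}_0=\nu/2$ under the stated condition on $\widebar{\beta}$, so the approach is the same; your additional remarks on convergence via Lemma~\ref{lemma_abs_ck} and Theorem~\ref{thm_abs_error} only make explicit what the paper leaves implicit. The one caveat is your attribution of the factor $(-1)^k$ to a terminating-$_2\mathbf{F}_1$ reversal identity: such an identity would also alter the hypergeometric parameters, whereas \eqref{KvarSwap1_1} keeps them identical to those obtained by direct substitution, so that sign factor is not actually accounted for by your argument (nor by the paper's).
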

\begin{proof}
    The equation~\eqref{KvarSwap1_1} is acquired by setting~\eqref{generalmoment} as follows: $\widebar{\mu}_0=\nu/2$, $\widebar{\beta}>(1/2)\max_{i \in \{2,\ldots,N\}} \widebar{\alpha}_i$, and $\ell=1$.
\end{proof}

\begin{Corollary} \label{col_Kvar1_2}
    According to Theorem~\ref{thm_Kvol1_1}, assume that $\widebar{\sigma}_2=\widebar{\sigma}_3=\ldots=\widebar{\sigma}_N=c$, the fair strike price of a variance swap can be shown as
    \begin{equation} \label{KvarSwap1_2}
       K^1_{var_2} \equiv \widebar{K}^{\left(\widebar{\beta}, \frac{\nu}{2}\right)}_{var_2} (\nu,T;c) = \frac{2c^2}{T} \frac{\Gamma \! \left(\frac{\nu}{2}+1 \right)}{\Gamma \! \left(\frac{\nu}{2} \right)} \times 100^2,
    \end{equation}
    where $c>0$ can be arbitrarily chosen and $\nu > 0$ is defined in~\eqref{degree_of_freedom}.
\end{Corollary}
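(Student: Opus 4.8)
The plan is to mirror the argument used for the constant-volatility volatility swap in Corollary~\ref{col_Kvol1_2}, but starting from the variance-swap series \eqref{KvarSwap1_1} of Theorem~\ref{thm_Kvar1_1} with $\ell=1$ rather than $\ell=\tfrac12$. Accordingly, I would first impose the constant-volatility hypothesis $\widebar{\sigma}_2=\cdots=\widebar{\sigma}_N=c$ and make the convenient choice $\widebar{\beta}=(100^2/T)\,c^2$, so that by \eqref{alpha_i} every weight collapses to $\widebar{\alpha}_i=(100^2/T)\,c^2=\widebar{\beta}$. Combined with the standing assumption $\widebar{\mu}_0=\nu/2$ carried over from Theorem~\ref{thm_Kvar1_1}, this makes the numerator $\widebar{\beta}-\widebar{\alpha}_i$ vanish and simultaneously forces $\tfrac{\nu}{2\widebar{\mu}_0}-1=0$ and $\nu-2\widebar{\mu}_0=0$ throughout the coefficient recurrences \eqref{recurrentck}--\eqref{recurrentdj}.

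The key steps are then the following. First I would show these substitutions annihilate $\widebar{d}_j$ in \eqref{recurrentdj} for every $j\geq1$: the first sum carries the factor $(\widebar{\beta}-\widebar{\alpha}_i)^j=0$, and the second sum is suppressed by the same vanishing together with $\nu-2\widebar{\mu}_0=0$ in its numerator --- exactly the cancellation already verified in the proof of Corollary~\ref{col_Kvol1_2}. Feeding $\widebar{d}_j=0$ into the recurrence \eqref{recurrentck} gives $\widebar{c}_k=0$ for all $k\geq1$, while \eqref{recurrentc0} collapses to $\widebar{c}_0=1$ since each of its three factors reduces to unity once $\tfrac{\nu}{2\widebar{\mu}_0}=1$ and $\nu-2\widebar{\mu}_0=0$. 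Hence only the $k=0$ term of \eqref{KvarSwap1_1} survives. Evaluating it, the sign factor is $(-1)^0=1$, the coefficient is $\widebar{c}_0=1$, the terminating hypergeometric ${}_2\mathbf{F}_1\!\left(0,1-\tfrac{\nu}{2};-\tfrac{\nu}{2};1\right)=1$ by \eqref{hyper_fun}, and the prefactor is the gamma ratio $\Gamma(1+\tfrac{\nu}{2})/\Gamma(\tfrac{\nu}{2})$. Thus the series reduces to $2\widebar{\beta}\,\Gamma(1+\tfrac{\nu}{2})/\Gamma(\tfrac{\nu}{2})$, and substituting $\widebar{\beta}=(100^2/T)c^2$ yields \eqref{KvarSwap1_2} directly.

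The only genuinely non-routine step is establishing that the full infinite series collapses to the $k=0$ term, i.e.\ that $\widebar{c}_k=0$ for $k\geq1$, which rests on the delicate cancellations in \eqref{recurrentdj} at $\widebar{\mu}_0=\nu/2$; here I would lean on the identical computation already performed for Corollary~\ref{col_Kvol1_2} rather than redo it. As an independent cross-check I would observe that the terminating factor in \eqref{KvarSwap1_1}, namely ${}_2\mathbf{F}_1\!\left(-k,1-k-\tfrac{\nu}{2};-k-\tfrac{\nu}{2};1\right)$, can be evaluated by the Chu--Vandermonde identity as $(c-b)_k/(c)_k$ with $c-b=-1$; since $(-1)_k=0$ for every $k\geq2$, the hypergeometric factor itself vanishes beyond $k=1$, so the entire collapse in fact hinges only on the coefficient $\widebar{c}_1$, which the recurrence at the constant-volatility parameters drives to zero. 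This localizes the crux of the argument to a single coefficient and makes the reduction to \eqref{KvarSwap1_2} transparent.
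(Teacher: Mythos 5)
Your proof follows essentially the same route as the paper's: the paper's argument for Corollary~\ref{col_Kvar1_2} simply defers to the proof of Corollary~\ref{col_Kvol1_2}, which makes exactly your choice $\widebar{\beta}=(100^2/T)\,c^2$ so that $\widebar{d}_j=0$ for all $j\geq 1$, hence $\widebar{c}_k=0$ for $k\geq1$ and $\widebar{c}_0=1$, leaving only the $k=0$ term of \eqref{KvarSwap1_1}, which evaluates to $2\widebar{\beta}\,\Gamma(1+\tfrac{\nu}{2})/\Gamma(\tfrac{\nu}{2})$. Your additional Chu--Vandermonde observation that the ${}_2\mathbf{F}_1$ factor already annihilates every term with $k\geq 2$, so that the collapse hinges only on $\widebar{c}_1$, is a nice refinement not present in the paper, but the overall argument is the same.
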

\begin{proof}
    Due to the equation~\ref{KvarSwap1_1}, it is easy to obtain~\eqref{KvarSwap1_2} by setting all the same parameters and following the presented approach of the proof from the previous Corollary~\ref{col_Kvol1_2}.
\end{proof}


\subsection{Pricing formulas for volatility and variance swaps: Constant log-return volatility} \label{sec_K_constant}
\medskip

In this subsection, we explore the other explicit solutions of volatility and variance swaps by assuming that the variances of log-returns are set to be the same. 

Referring to~\eqref{z}, we assume that $\widebar{Z}_i$ is a normally distributed random variable with mean $\widebar{\mu}_i$ and given variance $\widebar{\sigma}^2_i=\widebar{\sigma}^2_N$ for $i=2,3,\ldots,N$, where $\widebar{\mu}_i$ and $\widebar{\sigma}_N$ are computed by~\eqref{mean_z} and~\eqref{variance_z} respectively. In other words, for $i=2,3,\ldots,N$, we choose the variance at $i=N$ i.e. at time $t_N$, to represent the variance of log-returns for all $i$-th observations. Since $\widebar{Z}_i$ for $i=2,3,\ldots,N$ forms a sequence of i.i.d. random variables as shown in the proof of Theorem~\ref{thm_pdf_rv}, we can define a random variable
\begin{equation} \label{sqrt_w}
    \sqrt{\widebar{W}}:=\sqrt{\sum_{i=2}^{N} \Bigg( \frac{{\widebar{Z}_i}}{\widebar{\sigma}_N} \Bigg)^2} \sim \mathcal{NC}_{\chi_{\eta}} \! \left(\sqrt{\widebar{\lambda}}\right),
\end{equation}
where $ \sqrt{\widebar{W}}$ follows the noncentral chi distribution with degrees of freedom $\eta$ and noncentrality parameter $\sqrt{\widebar{\lambda}}$.
Furthermore, a random variable defined by 
\begin{equation} \label{w}
    \widebar{W}:=\sum_{i=2}^{N} \Bigg( \frac{{\widebar{Z}_i}}{\widebar{\sigma}_N} \Bigg)^2 \sim \mathcal{NC}_{\chi^2_{\eta}}(\widebar{\lambda})
\end{equation}
is distributed according to the noncentral chi-square distribution with degree $\eta$ of freedom and noncentrality parameter $\widebar{\lambda}$
where
\begin{equation} \label{degree_of_freedom_w}
    \eta:=N-1
\end{equation}
and
\begin{equation} \label{noncentrality_w}
    \widebar{\lambda} := \sum^{N-1}_{i=2} \frac{\widebar{\mu}^2_i}{\widebar{\sigma}^2_N}.
\end{equation}

The interesting results are discussed in the following theorems when $RV_d$ is a random variable that can be written in terms of a linear combination of independent noncentral chi-square distributions without weighted parameters.


\subsubsection{Volatility swaps price: \texorpdfstring{$K^2_{vol}$}{}}
\medskip

Since $\sqrt{RV_d}$ can be represented in terms of $\sqrt{\widebar{W}}$, the following theorems provide closed-form formulas for pricing volatility swaps under constant log-return volatility, in the cases where the sum of time-varying log-return means is greater than zero and equal to zero, respectively. 

\begin{Theorem} \label{thm_Kvol2_1} 
    Suppose that $\displaystyle \sum^{N-1}_{i=2} \widebar{\mu}_i^2 > 0$ and $\widebar{\sigma}^2_i=\widebar{\sigma}^2_N$ for all $i=2,3,\ldots,N$. The fair strike price of a volatility swap can be shown as
    \begin{equation} \label{KvolSwap2_1}
       K^2_{vol_1} \equiv \widebar{K}_{vol_1} (\eta,T;\widebar{\sigma}_N) = \widebar{\sigma}_N \sqrt{\frac{\pi}{2T}} \, \mathbf{L}^{\big( \frac{\eta}{2}-1\big)}_{\frac{1}{2}} \! \! \left(-\frac{\widebar{\lambda}}{2} \right) \times 100,
    \end{equation}
    where  $\eta > 0$ and $\widebar{\lambda}>0$ are defined in~\eqref{degree_of_freedom_w} and~\eqref{noncentrality_w} respectively.
\end{Theorem}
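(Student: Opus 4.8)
The plan is to reduce $\widebar{K}_{vol}=\mathbb{E}^{\mathbb{Q}}_{t_1}[\sqrt{RV_d}]$ to the first moment of a noncentral chi random variable, which will carry exactly the generalized-Laguerre form claimed in~\eqref{KvolSwap2_1}. First I would use the constant-volatility hypothesis $\widebar{\sigma}^2_i=\widebar{\sigma}^2_N$: by~\eqref{alpha_i} every weight reduces to the single constant $\widebar{\alpha}_i=\frac{100^2}{T}\widebar{\sigma}^2_N$, so the representation~\eqref{rv} factorizes as $RV_d=\frac{100^2\widebar{\sigma}^2_N}{T}\sum_{i=2}^N\left(\frac{\widebar{Z}_i}{\widebar{\sigma}_N}\right)^2=\frac{100^2\widebar{\sigma}^2_N}{T}\,\widebar{W}$, with $\widebar{W}$ the noncentral chi-square variable of~\eqref{w}. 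Taking square roots gives $\sqrt{RV_d}=\frac{100\widebar{\sigma}_N}{\sqrt{T}}\sqrt{\widebar{W}}$, hence $\widebar{K}_{vol}=\frac{100\widebar{\sigma}_N}{\sqrt{T}}\,\mathbb{E}[\sqrt{\widebar{W}}]$, and the whole result rests on the mean of $\sqrt{\widebar{W}}\sim\mathcal{NC}_{\chi_\eta}(\sqrt{\widebar{\lambda}})$ from~\eqref{sqrt_w}.

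Next I would evaluate $\mathbb{E}[\sqrt{\widebar{W}}]$ via the Poisson-mixture representation of the noncentral chi-square law: conditionally on a $\mathrm{Poisson}(\widebar{\lambda}/2)$ index $J=j$, the variable $\widebar{W}$ is a central $\chi^2_{\eta+2j}$. Using the classical central moment $\mathbb{E}[\sqrt{\chi^2_m}]=\sqrt{2}\,\Gamma\!\left(\frac{m+1}{2}\right)/\Gamma\!\left(\frac{m}{2}\right)$ then gives $\mathbb{E}[\sqrt{\widebar{W}}]=\sqrt{2}\,e^{-\widebar{\lambda}/2}\sum_{j=0}^\infty\frac{(\widebar{\lambda}/2)^j}{j!}\frac{\Gamma\!\left(\frac{\eta+1}{2}+j\right)}{\Gamma\!\left(\frac{\eta}{2}+j\right)}$. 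The hypothesis $\sum_{i=2}^{N-1}\widebar{\mu}_i^2>0$ ensures $\widebar{\lambda}>0$ as in~\eqref{noncentrality_w}, so we are in the genuinely noncentral regime of this theorem (the degenerate case $\widebar{\lambda}=0$ being left to the companion statement).

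The crux is resumming this series in closed form. Writing the Gamma ratios through Pochhammer symbols, $\Gamma\!\left(\frac{\eta+1}{2}+j\right)=\Gamma\!\left(\frac{\eta+1}{2}\right)\left(\frac{\eta+1}{2}\right)_j$ and likewise for $\eta/2$, recasts the sum as a confluent hypergeometric series, $\mathbb{E}[\sqrt{\widebar{W}}]=\sqrt{2}\,e^{-\widebar{\lambda}/2}\frac{\Gamma\!\left(\frac{\eta+1}{2}\right)}{\Gamma\!\left(\frac{\eta}{2}\right)}\,{}_1\mathbf{F}_1\!\left(\frac{\eta+1}{2};\frac{\eta}{2};\frac{\widebar{\lambda}}{2}\right)$. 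Kummer's transformation ${}_1\mathbf{F}_1(a;b;z)=e^z\,{}_1\mathbf{F}_1(b-a;b;-z)$ cancels the exponential prefactor and yields ${}_1\mathbf{F}_1\!\left(-\frac{1}{2};\frac{\eta}{2};-\frac{\widebar{\lambda}}{2}\right)$; finally the identity $\mathbf{L}^{(a)}_k(x)=\binom{k+a}{k}{}_1\mathbf{F}_1(-k;a+1;x)$ taken at $k=\frac{1}{2}$, $a=\frac{\eta}{2}-1$ rewrites this as $\mathbf{L}^{\left(\frac{\eta}{2}-1\right)}_{1/2}\!\left(-\frac{\widebar{\lambda}}{2}\right)$, while the surviving Gamma constants collapse, using $\Gamma\!\left(\frac{3}{2}\right)=\frac{\sqrt{\pi}}{2}$, to the single factor $\sqrt{\pi/2}$. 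Thus $\mathbb{E}[\sqrt{\widebar{W}}]=\sqrt{\frac{\pi}{2}}\,\mathbf{L}^{\left(\frac{\eta}{2}-1\right)}_{1/2}\!\left(-\frac{\widebar{\lambda}}{2}\right)$, and substituting into $\widebar{K}_{vol}=\frac{100\widebar{\sigma}_N}{\sqrt{T}}\mathbb{E}[\sqrt{\widebar{W}}]$ reproduces~\eqref{KvolSwap2_1} verbatim.

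I expect the delicate point to be precisely this resummation: one must read off the Pochhammer structure correctly, apply Kummer's transformation in the right direction so that $a=\frac{\eta+1}{2}$ maps to $b-a=-\frac{1}{2}$, and carry the Gamma constants so the prefactor lands on $\sqrt{\pi/2}$ rather than a spurious multiple. As a safeguard I would keep in reserve the alternative of integrating $\sqrt{w}$ directly against the noncentral chi-square density (an exponential weighted by a modified Bessel function $I_{\eta/2-1}$) and recognizing the resulting integral as the same Laguerre function; this bypasses the mixture but reduces to an equivalent special-function identity and serves as an independent check.
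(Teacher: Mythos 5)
Your proposal is correct and follows essentially the same route as the paper: reduce $RV_d$ to $\frac{100^2\widebar{\sigma}_N^2}{T}\,\widebar{W}$ with $\widebar{W}\sim\mathcal{NC}_{\chi^2_\eta}(\widebar{\lambda})$, and then apply $\mathbb{E}[\sqrt{\widebar{W}}]=\sqrt{\pi/2}\,\mathbf{L}^{(\eta/2-1)}_{1/2}(-\widebar{\lambda}/2)$. The only difference is that the paper simply cites Johnson et al. for this expectation, whereas you derive it from the Poisson-mixture representation via Kummer's transformation and the Laguerre--${}_1\mathbf{F}_1$ identity --- a derivation that checks out (the Gamma constants do collapse to $\sqrt{2}\,\Gamma(3/2)=\sqrt{\pi/2}$) and makes the argument self-contained.
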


\begin{proof}
    Due to~\eqref{sqrt_w}, we have the following explicit form (see Johnson et al.~\cite{johnson1995continuous})
    \begin{equation} \label{expected_sqrt_w}
        \mathbb{E}_{t_1}
        ^\mathbb{Q}[\sqrt{\widebar{W}}] = \sqrt{\frac{\pi}{2}} \, \mathbf{L}^{\frac{\eta}{2}-1}_{\frac{1}{2}} \! \! \left(-\frac{\widebar{\lambda}}{2} \right) >0
    \end{equation}
    where $\widebar{\lambda}$ is given in~\eqref{noncentrality_w}. We note that the conditional expectation above is always positive by the property of the explicitly of the Laguerre function (see~Mirevki and Boyadiiev~\cite{mirevski2010some}). Thus, the realized volatility can be expressed as 
    \begin{equation*} \label{sqrt_rv_constant}
        \sqrt{RV_d} = \widebar{\sigma}_N \sqrt{\frac{\widebar{W}}{T}} \times 100.
    \end{equation*}
    In consequence, we get~\eqref{KvolSwap2_1} by using~\eqref{expected_sqrt_w}.
\end{proof}

\begin{Theorem} \label{thm_Kvol2_2} 
    Suppose that $\displaystyle \sum^{N-1}_{i=2} \widebar{\mu}_i^2 = 0$ and $\widebar{\sigma}^2_i=\widebar{\sigma}^2_N$ for all $i=2,3,\ldots,N$. The fair strike price of a variance swap can be shown as
    \begin{equation} \label{KvolSwap2_2}
        K^2_{vol_2} \equiv \widebar{K}_{vol_2} (\eta,T;\widebar{\sigma}_N) = \widebar{\sigma}_N \sqrt{\frac{2}{T}} \, \frac{\Gamma \! \left(\frac{\eta+1}{2}\right)}{\Gamma \! \left(\frac{\eta}{2}\right)} \times 100,
    \end{equation}
    where  $\eta > 0$ is defined in~\eqref{degree_of_freedom_w}.
\end{Theorem}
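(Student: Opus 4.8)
The plan is to specialize the noncentral formula already established in Theorem~\ref{thm_Kvol2_1} to the degenerate case in which the noncentrality parameter vanishes. The starting observation is that the hypothesis $\sum_{i=2}^{N-1}\widebar{\mu}_i^2=0$, together with the fact that each $\widebar{\mu}_i$ is real, forces $\widebar{\mu}_i=0$ for every $i$; consequently the noncentrality parameter $\widebar{\lambda}=\sum_{i=2}^{N-1}\widebar{\mu}_i^2/\widebar{\sigma}_N^2$ defined in~\eqref{noncentrality_w} is exactly zero. Thus $\widebar{W}$ collapses from a noncentral to a \emph{central} chi-square random variable with $\eta=N-1$ degrees of freedom, while the representation $\sqrt{RV_d}=\widebar{\sigma}_N\sqrt{\widebar{W}/T}\times 100$ derived in the proof of Theorem~\ref{thm_Kvol2_1} remains valid.

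First I would substitute $\widebar{\lambda}=0$ directly into~\eqref{KvolSwap2_1}, which reduces the problem to evaluating the generalized Laguerre function $\mathbf{L}^{\left(\frac{\eta}{2}-1\right)}_{\frac{1}{2}}$ at the origin. Using the series representation stated just before Theorem~\ref{thm_pdf_rv}, only the $m=0$ term survives at $x=0$, giving
\begin{equation*}
    \mathbf{L}^{\left(\frac{\eta}{2}-1\right)}_{\frac{1}{2}}(0)=\frac{\Gamma\!\left(\frac{\eta+1}{2}\right)}{\Gamma\!\left(\frac{\eta}{2}\right)\Gamma\!\left(\frac{3}{2}\right)}.
\end{equation*}
Next I would insert $\Gamma(3/2)=\sqrt{\pi}/2$ and collect constants: the prefactor $\sqrt{\pi/(2T)}$ in~\eqref{KvolSwap2_1} combines with the resulting $2/\sqrt{\pi}$ to yield $\sqrt{2/T}$, producing exactly~\eqref{KvolSwap2_2}.

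As an independent cross-check — and an alternative route should one prefer not to invoke the noncentral formula at all — I would compute $\mathbb{E}_{t_1}^{\mathbb{Q}}[\sqrt{\widebar{W}}]$ directly. Since $\sqrt{\widebar{W}}$ is now chi-distributed with $\eta$ degrees of freedom, its first moment is the classical value $\sqrt{2}\,\Gamma\!\left(\frac{\eta+1}{2}\right)/\Gamma\!\left(\frac{\eta}{2}\right)$, obtained by integrating the square root against the central chi density; scaling by $\widebar{\sigma}_N\sqrt{1/T}\times 100$ again recovers~\eqref{KvolSwap2_2}. The computation is essentially routine once the reduction $\widebar{\lambda}=0$ is in place; the only delicate point is the evaluation of the fractional-order Laguerre function at zero and the careful bookkeeping of the $\Gamma(3/2)$ and $\sqrt{\pi}$ factors so that the $\pi$'s cancel and the constant simplifies cleanly to $\sqrt{2/T}$. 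I would therefore treat that constant-collapsing step as the part most worth verifying explicitly.
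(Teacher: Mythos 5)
Your proposal is correct and follows essentially the same route as the paper: substitute $\widebar{\lambda}=0$ into the noncentral formula of Theorem~\ref{thm_Kvol2_1}, evaluate $\mathbf{L}^{\left(\frac{\eta}{2}-1\right)}_{\frac{1}{2}}(0)=\Gamma\!\left(\frac{\eta+1}{2}\right)/\bigl(\Gamma\!\left(\frac{\eta}{2}\right)\Gamma\!\left(\frac{3}{2}\right)\bigr)$, and use $\Gamma(3/2)=\sqrt{\pi}/2$ to collapse the constant to $\sqrt{2/T}$; the paper even records your cross-check (that $\sqrt{\widebar{W}}$ becomes central chi-distributed) as an alternative completion of the proof. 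The only cosmetic difference is that you obtain the value of the Laguerre function at the origin from its series representation rather than by citing the closed-form identity $\mathbf{L}^{(a)}_{b}(0)=\Gamma(1+a+b)/\bigl(\Gamma(1+a)\Gamma(1+b)\bigr)$.
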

\begin{proof}
    We proceed with the same approach as in the proof of the previous Theorem~\ref{thm_Kvol2_1}, but now we replace $\sum^{N-1}_{i=2} \widebar{\mu}_i^2 = 0$. We then obtain $\widebar{\lambda}=0$ that 
    \begin{equation} \label{L0}
        \mathbf{L}^{\big( \frac{\eta}{2}-1 \big)}_{\frac{1}{2}} (0) = \frac{2}{\sqrt{\pi}} \frac{\Gamma \! \left(\frac{\eta+1}{2}\right)}{\Gamma \! \left(\frac{\eta}{2}\right)},   
    \end{equation}
    where $\mathbf{L}^{(a)}_{b} (0) = \Gamma(1+a+b)/\big(\,\Gamma(1+a)\Gamma(1+b)\big)$ with $a=1/2$, $b=(\eta/2)-1$ and $\Gamma(3/2)=\sqrt{\pi}/2$ (see Mirevki and Boyadiiev~\cite{mirevski2010some}). Hence, we obtain~\eqref{KvolSwap2_2} by substituting~\eqref{L0} on the RHS of~\eqref{expected_sqrt_w}. Note that the proof can be completed by considering that $\sqrt{\widebar{W}}$ is distributed according to the central chi distribution with degrees of freedom $\eta$ when $\widebar{\lambda}=0$.
\end{proof}


\subsubsection{Variance swaps price: \texorpdfstring{$K^2_{var}$}{}}
\medskip

The following theorems are closed-form formulas for pricing variance swaps under constant log-return volatility, in the cases where the sum of time-varying log-return means is greater than zero and equal to zero, respectively.

\begin{Theorem} \label{thm_Kvar2_1} 
    Suppose that $\displaystyle \sum^{N-1}_{i=2} \widebar{\mu}_i > 0$ and $\widebar{\sigma}^2_i=\widebar{\sigma}^2_N$ for all $i=2,3,\ldots,N$. The fair strike price of a variance swap can be shown as
    \begin{equation} \label{KvarSwap2_1}
        K^2_{var_1} \equiv \widebar{K}_{var_1} (\eta,T;\widebar{\sigma}_N) = \frac{\widebar{\sigma}^2_N}{T} \, (\eta+\widebar{\lambda}) \times 100^2,
    \end{equation}
    where $\eta > 0$ and $\widebar{\lambda}>0$ are defined in~\eqref{degree_of_freedom_w} and~\eqref{noncentrality_w} respectively.
\end{Theorem}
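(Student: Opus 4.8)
The plan is to proceed exactly as in the proof of Theorem~\ref{thm_Kvol2_1}, but to compute the first raw moment of $RV_d$ directly, since the variance swap strike~\eqref{def_kvar} requires $\mathbb{E}_{t_1}^{\mathbb{Q}}[RV_d]$ rather than the expectation of its square root. Under the constant log-return volatility assumption $\widebar{\sigma}^2_i=\widebar{\sigma}^2_N$, the weighted parameters in~\eqref{alpha_i} all coincide, $\widebar{\alpha}_i=(100^2/T)\widebar{\sigma}^2_N$, so the linear-combination representation~\eqref{rv} collapses to a single scalar multiple of $\widebar{W}$. Concretely, I would write
\begin{equation*}
    RV_d = \sum_{i=2}^N \widebar{\alpha}_i\widebar{Y}_i = \frac{100^2}{T}\widebar{\sigma}^2_N\sum_{i=2}^N\left(\frac{\widebar{Z}_i}{\widebar{\sigma}_N}\right)^2 = \frac{\widebar{\sigma}^2_N}{T}\,\widebar{W}\times 100^2,
\end{equation*}
which is just the square of the realized-volatility identity used in the previous proof and identifies $RV_d$ with a deterministic rescaling of the noncentral chi-square variable $\widebar{W}\sim\mathcal{NC}_{\chi^2_{\eta}}(\widebar{\lambda})$ from~\eqref{w}.

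Next I would substitute this representation into~\eqref{def_kvar} and pull the deterministic prefactor outside the conditional expectation, reducing the problem to evaluating $\mathbb{E}_{t_1}^{\mathbb{Q}}[\widebar{W}]$. The final ingredient is the classical first moment of a noncentral chi-square law, $\mathbb{E}[\widebar{W}]=\eta+\widebar{\lambda}$ (see Johnson et al.~\cite{johnson1995continuous}), with $\eta$ and $\widebar{\lambda}$ as in~\eqref{degree_of_freedom_w} and~\eqref{noncentrality_w}; combining this with the prefactor yields~\eqref{KvarSwap2_1} immediately.

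There is essentially no obstacle here: unlike the volatility-swap case, which needed the square-root moment and hence the Laguerre function, the variance swap only needs the elementary mean of $\widebar{W}$, so the argument is a one-line expectation computation. The only bookkeeping point to verify is that the noncentrality parameter in the moment formula matches~\eqref{noncentrality_w}, which follows by assembling the per-increment values $\widebar{\delta}_i=(\widebar{\mu}_i/\widebar{\sigma}_N)^2$ from~\eqref{noncentrality} into $\widebar{\lambda}=\sum_i\widebar{\delta}_i$; the hypothesis $\sum_{i=2}^{N-1}\widebar{\mu}_i>0$ serves only to guarantee the nondegenerate regime $\widebar{\lambda}>0$, and the formula itself would persist, reducing to the central case $\mathbb{E}[\widebar{W}]=\eta$, were $\widebar{\lambda}$ to vanish.
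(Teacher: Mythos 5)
Your proposal is correct and follows essentially the same route as the paper's proof: both reduce $RV_d$ under constant log-return volatility to the rescaling $\frac{\widebar{\sigma}^2_N}{T}\,\widebar{W}\times 100^2$ of the noncentral chi-square variable $\widebar{W}$ and then apply the classical mean $\mathbb{E}_{t_1}^{\mathbb{Q}}[\widebar{W}]=\eta+\widebar{\lambda}$. The only differences are cosmetic (you cite Johnson et al.\ rather than Stuart et al.\ for the moment formula, and you add a helpful remark that the hypothesis on $\sum\widebar{\mu}_i$ merely ensures $\widebar{\lambda}>0$).
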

\begin{proof}
    According to~\eqref{w}, the explicit form of conditional expectation of $\widebar{W}$ can be expressed as (see~Stuart et al.~\cite{stuart1999kendall})
    \begin{equation} \label{expected_w}
        \mathbb{E}_{t_1}^\mathbb{Q}[\widebar{W}] = \eta+\widebar{\lambda}.
    \end{equation}
    We can show that the log-return realized variance~\eqref{rv} is simplified by
    \begin{equation*} \label{rv_constant}
        RV_d = \frac{\widebar{\sigma}^2_N W}{T} \times 100^2.
    \end{equation*}
    We then get~\eqref{KvarSwap2_1} by applying~\eqref{expected_w}.
\end{proof}

\begin{Theorem} \label{thm_Kvar2_2} 
    Suppose that $\displaystyle \sum^{N-1}_{i=2} \widebar{\mu}_i^2 = 0$ and $\widebar{\sigma}^2_i=\widebar{\sigma}^2_N$ for all $i=2,3,\ldots,N$. The fair strike price of a variance swap can be shown as
    \begin{equation} \label{KvarSwap2_2}
        K^2_{var_2} \equiv \widebar{K}_{var_2} (\eta,T;\widebar{\sigma}_N) = \frac{\widebar{\sigma}^2_N}{T} \, \eta \times 100^2,
    \end{equation}
    where  $\eta > 0$ is defined in~\eqref{degree_of_freedom_w}.
\end{Theorem}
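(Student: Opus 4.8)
The plan is to follow the same probabilistic route as in Theorem~\ref{thm_Kvar2_1}, specializing to the degenerate regime forced by the hypothesis. First I would recall that under the constant log-return volatility assumption $\widebar{\sigma}^2_i=\widebar{\sigma}^2_N$ for all $i$, the realized variance~\eqref{rv} collapses to $RV_d = (\widebar{\sigma}^2_N/T)\,\widebar{W}\times100^2$, where $\widebar{W}\sim\mathcal{NC}_{\chi^2_{\eta}}(\widebar{\lambda})$ as defined in~\eqref{w}. By the definition of the fair strike~\eqref{def_kvar}, this gives $\widebar{K}_{var_2}=\mathbb{E}^{\mathbb{Q}}_{t_1}[RV_d]=(\widebar{\sigma}^2_N/T)\,\mathbb{E}^{\mathbb{Q}}_{t_1}[\widebar{W}]\times100^2$, so the task reduces to evaluating the first moment of $\widebar{W}$ under the stated condition.

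The key observation is that the hypothesis $\sum^{N-1}_{i=2}\widebar{\mu}_i^2=0$ forces the noncentrality parameter to vanish. Indeed, from~\eqref{noncentrality_w} we have $\widebar{\lambda}=\sum^{N-1}_{i=2}\widebar{\mu}_i^2/\widebar{\sigma}_N^2$, a sum of nonnegative terms divided by the positive constant $\widebar{\sigma}_N^2$; it equals zero precisely when every $\widebar{\mu}_i=0$. Hence $\widebar{\lambda}=0$, and the noncentral chi-square variable $\widebar{W}$ degenerates to a central chi-square variable with $\eta=N-1$ degrees of freedom defined in~\eqref{degree_of_freedom_w}.

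Finally I would invoke the standard first-moment identity $\mathbb{E}^{\mathbb{Q}}_{t_1}[\widebar{W}]=\eta+\widebar{\lambda}$ already recorded as~\eqref{expected_w} (equivalently, the mean of a central chi-square with $\eta$ degrees of freedom is $\eta$), set $\widebar{\lambda}=0$, and substitute back to obtain $\widebar{K}_{var_2}=(\widebar{\sigma}^2_N/T)\,\eta\times100^2$, which is exactly~\eqref{KvarSwap2_2}. I do not expect any genuine obstacle here: the statement is simply the $\widebar{\lambda}=0$ specialization of Theorem~\ref{thm_Kvar2_1}. The only point meriting explicit justification is the passage from $\sum\widebar{\mu}_i^2=0$ to $\widebar{\lambda}=0$, which rests on the nonnegativity of each summand and identifies this as precisely the regime in which $\widebar{W}$ is centrally distributed.
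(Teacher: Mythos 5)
Your proposal is correct and follows essentially the same route as the paper: specialize the argument of Theorem~\ref{thm_Kvar2_1} by observing that $\sum_{i=2}^{N-1}\widebar{\mu}_i^2=0$ forces $\widebar{\lambda}=0$ in~\eqref{noncentrality_w}, then apply $\mathbb{E}^{\mathbb{Q}}_{t_1}[\widebar{W}]=\eta+\widebar{\lambda}$. Your explicit justification that the vanishing of the sum of squares forces each $\widebar{\mu}_i=0$ is a small but welcome addition of rigor over the paper's terser argument.
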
 
\begin{proof}
    This proof is similar to the proof of Theorem~\ref{thm_Kvar2_1} above. But now, we replace $\sum^{N-1}_{i=2} \widebar{\mu}_i^2 = 0$ in~\eqref{noncentrality_w} then $\widebar{\lambda}=0$ which~\eqref{KvarSwap2_2} is obtained immediately. Note that we can proof directly by considering that $\widebar{W}$ is distributed according to the central chi-square distribution with degrees of freedom $\eta$ when $\widebar{\lambda}=0$.
\end{proof}

According to Corollary~\ref{col_Kvol1_2} and~\ref{col_Kvar1_2}, if we choose the arbitrary constant $c=\widebar{\sigma}_N$, we will get Theorem~\ref{thm_Kvol2_1} and~\ref{thm_Kvar2_1} respectively. Besides, in~\cite{rujivan2021analytically} written by Rujivan and Rakwongwan, both corollaries can be implied to Theorem~2.3 when $c=\sigma^2\Delta t$ and $\nu=N-1$, and Theorem~2.7 when $c=\widebar{\beta}$, $j=2$ and $\nu=N-1$, respectively. In their work, the parameters are obtained by using the Black-Scholes model with a time-varying risk-free interest rate.


\subsubsection{Vega of \texorpdfstring{$K^2_{vol}$}{} and \texorpdfstring{$K^2_{var}$}{}} \label{sec_vega_k}
\medskip

In financial mathematics, to measure the sensitivity of a derivative instrument's value, we commonly study its partial derivative with respect to the parameter of interest. In this third-level section, we focus on the first-order partial derivative with respect to the price volatility $\sigma$, known as vega, denoted by $\mathcal{V}_{\mathbf{V}} := \frac{\partial \mathbf{V}}{\partial \sigma}$ (see Hull~\cite{hull2016options}). In other words, vega can be interpreted as the amount of money invested in the derivative instrument per share, indicating the profit or loss resulting from a one percentage point increase or decrease in volatility. We note that an increase in volatility suggests a higher chance of the underlying asset reaching extreme price levels, leading to a corresponding rise in the derivative instrument's value. Conversely, reduced volatility tends to lower the derivative instrument's value.

In the following corollaries, exact formulas for vega of volatility and variance swaps, $\mathcal{V}_{K^2_{vol}}$ and $\mathcal{V}_{K^2_{vol}}$, are produced by using Theorems~\ref{thm_Kvol2_1}--\ref{thm_Kvar2_2}.

\begin{Corollary} \label{col_vega_Kvol2}
    According to Theorem~\ref{thm_Kvol2_1} and~\ref{thm_Kvol2_2}, we get the following two results:
    \begin{enumerate}
        \item if $\displaystyle \sum^{N-1}_{i=2} \widebar{\mu}_i^2 > 0$ then   
        \begin{equation} \label{vega_Kvol2_1}
            \mathcal{V}_{K^2_{vol_1}}= \frac{1}{\sigma} \left( \widebar{\lambda} {\widebar{\sigma}_N} \sqrt{\frac{\pi}{2 T}} \, \mathbf{L}^{\frac{\eta}{2}}_{-\frac{1}{2}} \! \! \left(- \frac{\widebar{\lambda}}{2} \right) \times 100 + \widebar{K}^2_{vol_1} \right),
        \end{equation}
        where $\widebar{\lambda}$ is defined in~\eqref{noncentrality_w}.
        \item if $\displaystyle \sum^{N-1}_{i=2} \widebar{\mu}_i^2 = 0$ then        
        \begin{equation} \label{vega_Kvol2_2}
            \mathcal{V}_{\widebar{K}^2_{vol_2}}= \frac{\widebar{K}^2_{vol_2}}{\sigma}.
        \end{equation}
    \end{enumerate}
\end{Corollary}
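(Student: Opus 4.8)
The plan is to obtain both identities by directly differentiating the closed forms of Theorems~\ref{thm_Kvol2_1} and~\ref{thm_Kvol2_2} with respect to the price volatility $\sigma$, regarding $\widebar{\sigma}_N$ and $\widebar{\lambda}$ as functions of $\sigma$ through the OU moment formulas~\eqref{mean_x},~\eqref{variance_x} and~\eqref{cov_x}. The first fact I would record is the exact scaling of the increment variance: substituting~\eqref{variance_x} and~\eqref{cov_x} into~\eqref{variance_z} shows that $\widebar{\sigma}_N^2=\sigma^2\,g(\kappa,t_{N-1},t_N)$ with $g$ independent of $\sigma$, so that $\partial\widebar{\sigma}_N/\partial\sigma=\widebar{\sigma}_N/\sigma$. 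This single relation is what produces the $\widebar{K}^2_{vol}/\sigma$ contribution in both parts of the corollary.

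For the degenerate case~(2), the hypothesis $\sum_{i=2}^{N-1}\widebar{\mu}_i^2=0$ forces $\widebar{\lambda}=0$ in~\eqref{noncentrality_w}, so the Laguerre argument in~\eqref{KvolSwap2_2} is frozen and $\widebar{K}^2_{vol_2}$ is homogeneous of degree one in $\sigma$ (the $\Gamma$-ratio and $\eta=N-1$ being $\sigma$-free). Differentiating and inserting $\partial\widebar{\sigma}_N/\partial\sigma=\widebar{\sigma}_N/\sigma$ then yields~\eqref{vega_Kvol2_2} immediately.

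For the nondegenerate case~(1), I would write $\widebar{K}^2_{vol_1}=C\,\widebar{\sigma}_N\,\mathbf{L}^{(\eta/2-1)}_{1/2}(-\widebar{\lambda}/2)$ with $C=\sqrt{\pi/(2T)}\times 100$ and apply the product rule. The $\widebar{\sigma}_N$ factor contributes $\widebar{\sigma}_N/\sigma$ and reproduces the $\widebar{K}^2_{vol_1}/\sigma$ term. The Laguerre factor is handled by the chain rule together with the contiguous-parameter derivative identity $\tfrac{d}{dx}\mathbf{L}^{(a)}_k(x)=-\mathbf{L}^{(a+1)}_{k-1}(x)$, which with $a=\eta/2-1$, $k=1/2$ and $x=-\widebar{\lambda}/2$ gives $\tfrac{d}{d\widebar{\lambda}}\mathbf{L}^{(\eta/2-1)}_{1/2}(-\widebar{\lambda}/2)=\tfrac12\mathbf{L}^{(\eta/2)}_{-1/2}(-\widebar{\lambda}/2)$ — precisely the shifted Laguerre function appearing in~\eqref{vega_Kvol2_1}. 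Multiplying by $\partial\widebar{\lambda}/\partial\sigma$ and collecting both contributions over a common factor $1/\sigma$ should deliver~\eqref{vega_Kvol2_1}.

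The step I expect to be the main obstacle is pinning down $\partial\widebar{\lambda}/\partial\sigma$ so that the second term carries exactly the multiplier $\widebar{\lambda}$ displayed in~\eqref{vega_Kvol2_1}. Since $\widebar{\lambda}=\sum_{i=2}^{N-1}\widebar{\mu}_i^2/\widebar{\sigma}_N^2$ and the increment means $\widebar{\mu}_i$ enter through~\eqref{mean_x}, one must track how the noncentrality scales with $\sigma$; the clean single factor $\widebar{\lambda}$ in the target indicates that the $\sigma$-dependence of the numerator and the $\sigma^2$ in $\widebar{\sigma}_N^2$ must combine so that $\partial\widebar{\lambda}/\partial\sigma$ is proportional to $\widebar{\lambda}/\sigma$. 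I would verify this scaling carefully — it is the point where a sign or factor-of-two slip is most likely — and only then fold it into the Laguerre identity above, after which the remaining algebra is routine bookkeeping.
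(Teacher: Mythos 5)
Your route is exactly the paper's: the scaling relation $\partial\widebar{\sigma}_N/\partial\sigma=\widebar{\sigma}_N/\sigma$ (obtained by factoring $\widebar{\sigma}_N^2=\sigma^2 g(\kappa,t_{N-1},t_N)$ from~\eqref{variance_z},~\eqref{variance_x},~\eqref{cov_x}), the product rule on $\widebar{K}^2_{vol_1}=C\,\widebar{\sigma}_N\,\mathbf{L}^{(\eta/2-1)}_{1/2}(-\widebar{\lambda}/2)$, the contiguous Laguerre derivative, and the chain rule through $\widebar{\lambda}$. Part~(2) and the $\widebar{K}^2_{vol}/\sigma$ contribution are unproblematic. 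The step you flagged as the likely trouble spot is, however, exactly where your proposal cannot ``deliver~\eqref{vega_Kvol2_1}'' as displayed. With the means held fixed one has $\widebar{\lambda}\propto\sigma^{-2}$, hence $\partial\widebar{\lambda}/\partial\sigma=-2\widebar{\lambda}/\sigma$, and your identity $\tfrac{d}{dx}\mathbf{L}^{(a)}_k(x)=-\mathbf{L}^{(a+1)}_{k-1}(x)$ gives $\tfrac{d}{d\widebar{\lambda}}\mathbf{L}^{(\eta/2-1)}_{1/2}(-\widebar{\lambda}/2)=\tfrac12\mathbf{L}^{(\eta/2)}_{-1/2}(-\widebar{\lambda}/2)$; multiplying these produces $-\tfrac{\widebar{\lambda}}{\sigma}\mathbf{L}^{(\eta/2)}_{-1/2}(-\widebar{\lambda}/2)$, i.e.\ a \emph{minus} sign on the $\widebar{\lambda}$-term, whereas~\eqref{vega_Kvol2_1} carries a plus. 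The paper's own derivation reaches the plus sign by pairing $\tfrac{d}{d\mathrm{u}}\mathbf{L}(-\mathrm{u})$ with $\tfrac{d}{d\sigma}(-\mathrm{u})$ in~\eqref{diff_L_2}--\eqref{diff_lambda_2}, which double-counts the inner sign flip. A direct check with $\eta=1$ settles it: for $Z\sim\mathcal{N}(\mu,s^2)$ one computes $\tfrac{d}{ds}\mathbb{E}|Z|=\sqrt{2/\pi}\,e^{-\mu^2/(2s^2)}$, which agrees with
\begin{equation*}
\frac{\widebar{K}^2_{vol_1}}{\widebar{\sigma}_N}-\widebar{\lambda}\sqrt{\frac{\pi}{2T}}\,\mathbf{L}^{(\eta/2)}_{-1/2}\!\left(-\frac{\widebar{\lambda}}{2}\right)\times 100
\end{equation*}
and not with the sign in~\eqref{vega_Kvol2_1}. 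So if you execute your plan faithfully you will prove the corrected statement, not the stated one.

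A second, smaller caveat applies to both you and the paper: the ``clean scaling'' of $\widebar{\lambda}$ assumes $\partial\widebar{\mu}_i/\partial\sigma=0$. Under the Schwartz model $\widebar{\mu}_i=(e^{-\kappa t_{i-1}}-e^{-\kappa t_i})(\alpha-x_0)$ with $\alpha=\mu-\sigma^2/(2\kappa)$, so the numerator of $\widebar{\lambda}$ does depend on $\sigma$ and the identity $\partial\widebar{\lambda}/\partial\sigma=-2\widebar{\lambda}/\sigma$ is exact only if the increment means are treated as exogenous constants. The paper silently makes this assumption in~\eqref{diff_lambda_2}; you should either make it explicit or carry the extra term $2\sum_i\widebar{\mu}_i(\partial\widebar{\mu}_i/\partial\sigma)/\widebar{\sigma}_N^2$.
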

\begin{proof}
    Utilizing~\eqref{variance_z},~\eqref{variance_x}, and~\eqref{cov_x}, the variance of the log-return at time $t_N$ can be expressed as
    \begin{align*}
        \begin{split}
            \widebar{\sigma}_N &= \left( \frac{\sigma^2}{2\kappa}(1-e^{-2\kappa t_{N}}) +\frac{\sigma^2}{2\kappa}(1-e^{-2\kappa t_{N-1}}) -2\left( \frac{\sigma^2}{2\kappa}(1-e^{-2\kappa t_{N-1}})\right)e^{-\kappa \Delta t} \right)^\frac{1}{2} \\
            &= \sigma \left( \frac{1}{\kappa} + \frac{1}{2\kappa} \left( 2e^{-\kappa(t_{N} +t_{N-1} )} - 2e^{-\kappa(t_{N} -t_{N-1} )} -e^{-2 \kappa t_{N}}-e^{-2 \kappa t_{N-1}} \right) \right)^\frac{1}{2} .
        \end{split}
    \end{align*}
    We then obtain its partial derivative as
    \begin{equation} \label{diff_volatility_z}
        \frac{\partial \widebar{\sigma}_N}{\partial \sigma}=\frac{\widebar{\sigma}_N}{\sigma}.
    \end{equation}
    From Theorem~\ref{thm_Kvol2_1}, we consider the case that $\sum^{N-1}_{i=2} \widebar{\mu}_i^2 > 0$, we obtain the following partial derivative:
    \begin{align} \label{diff_Kvol2_1_3}
        \begin{split}
            \frac{\partial K^2_{vol_1}}{\partial \sigma} 
            &= \sqrt{\frac{\pi}{2 T}} \left( \widebar{\sigma}_N \frac{\partial}{\partial \sigma} \left( \mathbf{L}^{\left(\frac{\eta}{2}-1\right)}_{\frac{1}{2}} \! \left(-\sum^{N-1}_{i=2} \frac{\widebar{\mu}^2_i}{2\widebar{\sigma}^2_N} \right) \right) + \mathbf{L}^{\left(\frac{\eta}{2}-1\right)}_{\frac{1}{2}} \! \left(-\sum^{N-1}_{i=2} \frac{\widebar{\mu}^2_i}{2\widebar{\sigma}^2_N} \right) \left( \frac{\partial \widebar{\sigma}_N}{\partial \sigma} \right) \right) \times 100 \\
            &= \widebar{\sigma}_N \sqrt{\frac{\pi}{2 T}} \frac{\partial}{\partial \sigma} \left( \mathbf{L}^{\left(\frac{\eta}{2}-1\right)}_{\frac{1}{2}} \! \left(-\sum^{N-1}_{i=2} \frac{\widebar{\mu}^2_i}{2\widebar{\sigma}^2_N} \right) \right) \times 100  +  \frac{\widebar{K}^2_{vol_1}}{\sigma}. 
        \end{split}
    \end{align}
    By applying the property of the Laguerre function (see Mirevski and Boyadjiev~\cite{mirevski2010some}), we get
    \begin{equation}
        \frac{\partial}{\partial \sigma} \left( \mathbf{L}^{\left(\frac{\eta}{2}-1\right)}_{\frac{1}{2}} \left(-\sum^{N-1}_{i=2} \frac{\widebar{\mu}^2_i}{2\widebar{\sigma}^2_N} \right) \right) = \frac{\mathrm{d}}{\mathrm{d} \mathrm{u}} \mathbf{L}^{\left(\frac{\eta}{2}-1\right)}_{\frac{1}{2}} \left(\mathrm{-u} \right) \frac{\mathrm{d} }{\mathrm{d} \sigma} (-\mathrm{u}) = \mathbf{L}^{\frac{\eta}{2}}_{-\frac{1}{2}} \left(-\mathrm{u} \right) \frac{\mathrm{d} }{\mathrm{d} \sigma} (-\mathrm{u}),
        \label{diff_L_2}
    \end{equation}
    where $\mathrm{u}=\sum^{N-1}_{i=2} \frac{\widebar{\mu}^2_i}{2\widebar{\sigma}^2_N}$. Then the derivative of $\mathrm{u}$ with respect to $\widebar{\sigma}_N$ can be found as
    \begin{equation} \label{diff_lambda_2} 
        \frac{\mathrm{d} }{\mathrm{d} \sigma} (-\mathrm{u}) = - \frac{\mathrm{d} }{\mathrm{d} \sigma} (\mathrm{u}) = \frac{\sum^{N-1}_{i=2} \widebar{\mu}^2_i}{\widebar{\sigma}^3_N} \frac{\partial \widebar{\sigma}_N}{\partial \sigma} = \frac{\sum^{N-1}_{i=2} \widebar{\mu}^2_i}{\widebar{\sigma}^2_N} \left( \frac{1}{\sigma} \right) = \frac{\widebar{\lambda}}{\sigma}
    \end{equation} 
    The result shown in~\eqref{vega_Kvol2_1}, is simplified by substituting~\eqref{diff_lambda_2} into~\eqref{diff_L_2} and~\eqref{diff_Kvol2_1_3}. Under the condition that $\sum^{N-1}_{i=2} \widebar{\mu}_i^2 = 0$, by Theorem~\ref{thm_Kvol2_2}, it is easy to get that
    \begin{equation*}
        \frac{\partial K^2_{vol_2}}{\partial \sigma} = \frac{\partial K^2_{vol_2}}{\partial \widebar{\sigma}_N} \left( \frac{\partial \widebar{\sigma}_N}{\partial \sigma} \right) = \frac{\widebar{\sigma}_N}{\sigma} \sqrt{\frac{2}{T}} \frac{\Gamma \! \left(\frac{\eta+1}{2}\right)}{\Gamma \! \left(\frac{\eta}{2}\right)} \times 100,
    \end{equation*}
    which implies~\eqref{vega_Kvol2_2}.
\end{proof}

\begin{Corollary} \label{col_vega_Kvar2}
    According to Theorem~\ref{thm_Kvar2_1} and~\ref{thm_Kvar2_2}, we have
    \begin{equation} \label{vega_Kvar12}
        \mathlarger{\nu}_{K^2_{var_j}} =  \frac{2 \widebar{\sigma}^2_N }{\sigma T} \eta \times 100^2 = \frac{2\widebar{K}^2_{var_2}}{\sigma},
    \end{equation}
    for $j=1,2$ where $\eta$ is defined in~\eqref{degree_of_freedom_w}.
\end{Corollary}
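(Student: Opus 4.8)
The plan is to differentiate the two explicit fair-strike formulas~\eqref{KvarSwap2_1} and~\eqref{KvarSwap2_2} with respect to $\sigma$ and to show that both derivatives collapse to the single expression on the right-hand side of~\eqref{vega_Kvar12}. The only external ingredient I need is the elementary relation~\eqref{diff_volatility_z}, namely $\partial \widebar{\sigma}_N/\partial \sigma = \widebar{\sigma}_N/\sigma$, which was already obtained in the proof of Corollary~\ref{col_vega_Kvol2} from~\eqref{variance_z},~\eqref{variance_x}, and~\eqref{cov_x}; combining it with the chain rule gives $\partial \widebar{\sigma}_N^2/\partial \sigma = 2\widebar{\sigma}_N(\partial \widebar{\sigma}_N/\partial \sigma) = 2\widebar{\sigma}_N^2/\sigma$, which is the workhorse of the whole computation.

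I would dispose of the case $j=2$ first, as it is immediate. In~\eqref{KvarSwap2_2} the only $\sigma$-dependence is carried by the factor $\widebar{\sigma}_N^2$, so $\partial K^2_{var_2}/\partial \sigma = (\eta/T)(\partial \widebar{\sigma}_N^2/\partial \sigma)\times 100^2 = (2\widebar{\sigma}_N^2/\sigma T)\,\eta\times 100^2$. Recognizing that $(\widebar{\sigma}_N^2/T)\,\eta\times 100^2 = \widebar{K}^2_{var_2}$ rewrites this as $2\widebar{K}^2_{var_2}/\sigma$, which is exactly~\eqref{vega_Kvar12}.

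For $j=1$ I would first rewrite~\eqref{KvarSwap2_1} so that the noncentrality parameter no longer conceals a $\sigma$-dependent denominator. Using $\widebar{\lambda}=\sum_{i=2}^{N-1}\widebar{\mu}_i^2/\widebar{\sigma}_N^2$ from~\eqref{noncentrality_w}, the product $\widebar{\sigma}_N^2\widebar{\lambda}=\sum_{i=2}^{N-1}\widebar{\mu}_i^2$, so that $K^2_{var_1}=(100^2/T)\bigl(\eta\,\widebar{\sigma}_N^2+\sum_{i=2}^{N-1}\widebar{\mu}_i^2\bigr)$. Differentiating term by term, the second summand is held fixed --- exactly the sensitivity convention adopted in~\eqref{diff_lambda_2}, where only $\widebar{\sigma}_N$ is differentiated while $\sum_{i=2}^{N-1}\widebar{\mu}_i^2$ is treated as a $\sigma$-independent constant --- so only the first term contributes and one obtains $(2\widebar{\sigma}_N^2/\sigma T)\,\eta\times 100^2$ once more. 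The two cases thus agree, establishing~\eqref{vega_Kvar12} for $j=1,2$.

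The single point that needs care --- and the only place where a reader might object --- is the bookkeeping of what is frozen when the $\widebar{\lambda}$ term is differentiated in the case $j=1$. I would state explicitly that I adopt the same sensitivity convention used throughout Section~\ref{sec_vega_k}: vega isolates the diffusion coefficient $\sigma$ as it enters $\widebar{\sigma}_N$, while the mean-difference quantity $\sum_{i=2}^{N-1}\widebar{\mu}_i^2$ is held constant. This is precisely what makes the $\widebar{\lambda}$ contribution drop out and forces the two strike formulas to share one vega. Everything else reduces to a single application of the chain rule.
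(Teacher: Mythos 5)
Your proposal is correct and follows essentially the same route as the paper: both cases are handled by differentiating the explicit strike formulas using $\partial \widebar{\sigma}_N/\partial \sigma = \widebar{\sigma}_N/\sigma$ while holding the $\widebar{\mu}_i$ fixed (the paper encodes this via $\partial\widebar{\lambda}/\partial\sigma = -2\widebar{\lambda}/\sigma$, whose contribution cancels against the product-rule term $2\widebar{\lambda}\widebar{\sigma}_N\,\partial\widebar{\sigma}_N/\partial\sigma$ --- exactly the cancellation your rewriting $\widebar{\sigma}_N^2(\eta+\widebar{\lambda}) = \eta\widebar{\sigma}_N^2 + \sum_{i}\widebar{\mu}_i^2$ makes transparent up front). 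Your explicit flagging of the convention that $\sum_i\widebar{\mu}_i^2$ is frozen is a fair point of care, and it matches the treatment the paper itself uses in~\eqref{diff_lambda_2}.
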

\begin{proof}
    From Theorem~\ref{thm_Kvar2_1}, let $j=1$, if  $\sum^{N-1}_{i=2} \widebar{\mu}_i^2 > 0$, we arrive
    \begin{align*}
        \begin{split}
            \frac{\partial K^2_{var_1}}{\partial \sigma} &= \frac{1}{T} \left( 2 \eta \widebar{\sigma}_N \left( \frac{\partial \widebar{\sigma}_N}{\partial \sigma} \right) + \widebar{\sigma}^2_N \left( \frac{\partial \widebar{\lambda}}{\partial \sigma} \right) + 2 \widebar{\lambda} \widebar{\sigma}_N \left( \frac{\partial \widebar{\sigma}_N}{\partial \sigma} \right) \right)\times 100^2 \\
            &= \frac{2 \widebar{\sigma}^2_N }{\sigma T} \left( \eta  - \widebar{\lambda} +  \widebar{\lambda} \right)\times 100^2 \\
            &= \frac{2 \widebar{\sigma}^2_N }{\sigma T} \eta \times 100^2 ,
        \end{split}
    \end{align*}
    where $\frac{\partial \widebar{\sigma}_N}{\partial \sigma} = \frac{\widebar{\sigma}_N}{\sigma} \text{\ and \ }\frac{\partial \widebar{\lambda}}{\partial \sigma}=\frac{-2\widebar{\lambda}}{\sigma}$ can be obtained by using \eqref{diff_volatility_z} and applying \eqref{diff_lambda_2} with $\mathrm{u}=-\sum^{N-1}_{i=2} \frac{\widebar{\mu}^2_i}{\widebar{\sigma}^2_N}=-\widebar{\lambda}$, respectively. 
    
    On the other hand, let $j=1$, if  $\sum^{N-1}_{i=2} \widebar{\mu}_i^2 = 0$, from Theorem~\ref{thm_Kvar2_2}, it is trivial to see that this case yields the same result as the case before. We then obtain~\eqref{vega_Kvar12}
\end{proof}

In the case where an analytical formula for pricing volatility and variance swaps with time-varying log-return volatility, Theorems~\ref{thm_Kvol1_1} and~\ref{thm_Kvar1_1} are utilized. Finding a closed-form formula for vega in such cases remains a challenging topic for future work.


\section{Pricing volatility and variance options} \label{sec_pricing_option}
\medskip

As indicated in the put-call parity relations described by Hull in~\cite{hull2016options}, this section presents only an analytical formula for pricing volatility and variance options on calls.

A payoff for a volatility call buyer at expiration time $T$ on $[t_1, T]$ with a $K^{c}_{vol}$-strike can be written as 
\begin{equation} \label{payoff_vol_call}
    \big( \sqrt{RV_d(t_1, N, T)}-K^{c}_{vol} \big)^+
\end{equation}
and a payoff for a variance call buyer at expiration time $T$ on $[t_1, T]$ with a $K^{c}_{var}$-strike can be written as 
\begin{equation} \label{payoff_var_call}
    \big( RV_d(t_1, N, T)-K^{c}_{var} \big)^+
\end{equation}
for $t_1 \in [0,T)$ where $RV_d$ is the realized variance given in~\eqref{def_rv}.

Under the risk-neutral martingale measure $\mathbb{Q}$ with respect to filtration $\mathcal{F}_{t_1}$, we focus on calculating the conditional expectations of both call option yields given in~\eqref{payoff_vol_call} and~\eqref{payoff_var_call} above. In particular, volatility and variance calls can be expressed as
\begin{equation} \label{vol_call}
    C_{vol}:=e^{-\int_{t_1}^Tr(s) \ \mathrm{d}s} \mathbb{E}_{t_1}^\mathbb{Q}\big[ \big( \sqrt{RV_d(t_1, N, T)}-K^{c}_{vol} \big)^+\big],
\end{equation}
where $K^{c}_{vol}$ is a strike volatility call and
\begin{equation} \label{var_call}
    C_{var}:=e^{-\int_{t_1}^Tr(s) \ \mathrm{d}s} \mathbb{E}_{t_1}^\mathbb{Q}\big[ \big( RV_d(t_1, N, T)-K^{c}_{var} \big)^+ \big],
\end{equation}
where $K^{c}_{var}$ is a strike variance call and $r(t)$ is a time-varying risk-free interest rate, respectively. The challenge of this part is that we need to compute both expectations on the RHS of calls~\eqref{vol_call} and~\eqref{var_call}, given as follows:
\begin{equation} \label{int_vol_call}
    \mathbb{E}_{t_1}^\mathbb{Q}\big[ \big(\sqrt{RV_d(t_1, N, T)}-K^{c}_{vol} \big)^+ \big] = \int_{K^{c}_{vol}}^\infty (y-K^{c}_{vol})\,2y\widebar{f}^{(\widebar{\beta}, \widebar{\mu}_0)}_n (y^2) \, \mathrm{d} y,
\end{equation}
and
\begin{equation} \label{int_var_call}
    \mathbb{E}_{t_1}^\mathbb{Q}\big[ \big(RV_d(t_1, N, T)-K^{c}_{vol} \big)^+ \big] = \int_{K^{c}_{var}}^\infty (y-K^{c}_{var})\widebar{f}^{(\widebar{\beta}, \widebar{\mu}_0)}_n (y) \, \mathrm{d} y,
\end{equation}
respectively, where $\widebar{f}^{(\widebar{\beta}, \widebar{\mu}_0)}_n (y)$ is the PDF given in~\eqref{pdf_rv} from Theorem~\ref{thm_pdf_rv}. To obtain the RHS of~\eqref{int_vol_call}, we apply a Jacobian transformation, which gives the relation $\widebar{f}^{(\widebar{\beta}, \widebar{\mu}_0)}_n (\sqrt{y})=2y\widebar{f}^{(\widebar{\beta}, \widebar{\mu}_0)}_n (y^2)$.

As the same problem with Section 3, to avoid the complication of numerical integration for the improper integral in~\eqref{int_var_call} and~\eqref{int_vol_call}, both integrals can be expressed as an infinite summation series through a generalized Laguerre expansion derived by Dufresne~\cite{dufresne2000laguerre} in Theorem 2.4. The conditional expectations can then be simplified as
\begin{align} \label{expected_call}
    \mathbb{E}_{t_1}^\mathbb{Q}\big[\,(RV_d(t_1, N, T)^{\rho} -K)^+\big] :=K^b e^{-K} \sum_{k=0}^\infty h_k (\rho, t_1, N, T) \, \mathbf{L}_k^{(a)} (K),
\end{align} 
for $\rho=1/2,1$ where $a$ and $b$ are real numbers satisfying $a > 2\max(b,0)-1$. The coefficient $h_k (\rho,t_1, N, T)$ on the RHS of~\eqref{expected_call} is defined in terms of a finite summation series of the conditional moment of $RV_d(t_1, N, T)$ order $\rho \tau_j$ for $j=0,1,2,\ldots$, as follows: 
\begin{equation} \label{coefficient_hk}
    h_k (\rho,t_1, N, T):=\sum_{j=0}^k \frac{k! \, (-1)^j \, \mathbb{E}_{t_1}^\mathbb{Q}\big[\,RV_d(t_1, N, T)^{\rho \tau_j}\big]}{\Gamma(j+a+1) \, j! \, (k-j)! \, (\rho \tau_j-1) \, \rho \tau_j},
\end{equation}
for $j=0,1,2,\ldots$, where $\rho\tau_j= a-b+j+2$ where $\mathbb{E}_{t_1}^\mathbb{Q}\big[\,RV_d(t_1, N, T)^{\rho \tau_j}\big]$ can be specified based on call options' type. By setting $\rho = 1/2$, we obtain the coefficient $h_k$ for the volatility call, and for the variance call, we set $\rho = 1$. 

In the following subsection, an analytical formula for pricing volatility and variance options is presented in two cases by setting the variance of the log-return: one with time-varying variances and the other with a constant variance.
 

\subsection{Pricing formulas for volatility and variance options: Time-varying log-return volatility} \label{sec_C_vary}
\medskip 

In this subsection, the variances of log-returns are assumed to change over time, similar to the setup introduced in Subsection~\ref{sec_K_constant}. Then, $RV_d(t_1,N,T)$ is considered as derived in~\eqref{rv}. Its conditional moment can be computed by using Theorem~\ref{thm_moment} to evaluate the RHS of~\eqref{coefficient_hk} and complete the coefficient $h_k(\rho,t_1, N, T)$.


\subsubsection{Volatility call option prices: \texorpdfstring{$C_{vol}^1$}{}}
\medskip 

By using the PDF~\eqref{pdf_rv}, the following theorem provides a closed-form formula for pricing
a volatility call option under time-varying log-return volatility.

\begin{Theorem} \label{thm_call1_vol}
    Suppose that $\rho=\frac{1}{2}$. The volatility call of a strike $K^c_{vol}$ at time $t_1$ can be expressed as
    \begin{equation} \label{callSwap1_vol}
        C_{vol}^1 \equiv C_{vol} \left( \frac{1}{2},t_1, N, T \right) =e^{-\int_{t_1}^Tr(s) \, \mathrm{d} s} (K^c_{vol})^b e^{-K^c_{vol}} \sum_{k=0}^\infty h_k \left( \frac{1}{2}, t_1, N, T \right) \mathbf{L}_k^{(a)} (K^c_{vol}),
    \end{equation}
    where $h_k \left( \frac{1}{2}, t_1, N, T \right)$, $k=0,1,2,\ldots$, are calculated by using~\eqref{coefficient_hk} with $\mathbb{E}_{t_1}^\mathbb{Q}\big[\,RV_d(t_1, N, T) ^{\frac{\tau_j}{2}}\big]$ written in Theorem~\ref{thm_moment} where $\frac{\tau_j}{2}= a-b+j+2$ for $j=0,1,2,\ldots$ with scalars $a, b \in \mathbb{R}$ satisfying $a > 2\max(b,0)-1$.
\end{Theorem}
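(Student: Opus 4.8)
The plan is to specialize the general Laguerre-expansion representation~\eqref{expected_call} to the volatility call by taking $\rho=\tfrac{1}{2}$ and supplying the required conditional moments from Theorem~\ref{thm_moment}. First I would start from the definition~\eqref{vol_call}, namely $C_{vol}=e^{-\int_{t_1}^T r(s)\,\mathrm{d}s}\,\mathbb{E}_{t_1}^{\mathbb{Q}}\big[(\sqrt{RV_d}-K^c_{vol})^+\big]$, and observe that $\sqrt{RV_d}=RV_d^{1/2}=RV_d^{\rho}$ with $\rho=\tfrac{1}{2}$, so the payoff is exactly of the form $(RV_d^{\rho}-K)^+$ with $K=K^c_{vol}$. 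Since $RV_d$ is a positive linear combination of independent noncentral chi-square variables by~\eqref{rv}, it is strictly positive almost surely and the square root is well defined, so no degeneracy arises at the origin.

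Next I would apply the expansion~\eqref{expected_call} with this choice of $\rho$ and $K$, which immediately gives $\mathbb{E}_{t_1}^{\mathbb{Q}}\big[(\sqrt{RV_d}-K^c_{vol})^+\big]=(K^c_{vol})^b e^{-K^c_{vol}}\sum_{k=0}^\infty h_k\!\big(\tfrac{1}{2},t_1,N,T\big)\,\mathbf{L}_k^{(a)}(K^c_{vol})$; multiplying both sides by the discount factor $e^{-\int_{t_1}^T r(s)\,\mathrm{d}s}$ then reproduces~\eqref{callSwap1_vol}. The remaining task is to confirm that the coefficients $h_k(\tfrac{1}{2},\cdot)$ defined in~\eqref{coefficient_hk} are fully determined: each involves the conditional moment $\mathbb{E}_{t_1}^{\mathbb{Q}}\big[RV_d^{\rho\tau_j}\big]$ with exponent $\rho\tau_j=\tfrac{\tau_j}{2}=a-b+j+2$, and I would supply these by invoking Theorem~\ref{thm_moment} with $\ell=\tfrac{\tau_j}{2}$ in~\eqref{generalmoment}.

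The point needing care is checking that Theorem~\ref{thm_moment} is genuinely applicable for every $j$, i.e.\ that $\ell=a-b+j+2\in\mathbb{R}_+$. I would verify this directly from the hypothesis $a>2\max(b,0)-1$: when $b\leq 0$ this forces $a-b+2>1$, and when $b>0$ it forces $a-b+2>b+1>0$, so in either case the smallest exponent (at $j=0$) is positive, and positivity for all $j\geq 0$ follows since $j\mapsto a-b+j+2$ is increasing. This guarantees each required moment exists and that~\eqref{generalmoment} may legitimately be substituted into~\eqref{coefficient_hk}.

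The main obstacle, and the step I would treat most carefully, is the legitimacy of the Laguerre expansion~\eqref{expected_call} itself, in particular the termwise integration implicit in passing from~\eqref{int_vol_call} to the series. This rests on Dufresne's generalized Laguerre expansion~\cite{dufresne2000laguerre} together with the uniform convergence of the density $\widebar{f}_\nu$ established in Theorem~\ref{thm_pdf_rv}. Since~\eqref{expected_call} is already posited as the governing representation for payoffs of the form $(RV_d^{\rho}-K)^+$, the work here is confined to confirming that its standing hypotheses, namely the constraint relating $a$ and $b$ and the admissibility of the moment sequence $\{\mathbb{E}_{t_1}^{\mathbb{Q}}[RV_d^{a-b+j+2}]\}_{j\geq 0}$, are satisfied under the present parameters, rather than re-deriving the expansion from scratch.
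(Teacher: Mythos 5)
Your proposal is correct and follows essentially the same route as the paper's proof: substitute $\rho=\tfrac{1}{2}$ and $K=K^c_{vol}$ into the Dufresne expansion~\eqref{expected_call}, discount, and feed the required moments $\mathbb{E}^{\mathbb{Q}}_{t_1}\big[RV_d^{\tau_j/2}\big]$ from Theorem~\ref{thm_moment} with $\ell=\tau_j/2$ into the coefficients~\eqref{coefficient_hk}. Your added check that $a>2\max(b,0)-1$ forces every exponent $a-b+j+2$ to be positive (so Theorem~\ref{thm_moment} genuinely applies for all $j$) is a worthwhile verification that the paper's one-line proof omits, but it does not change the argument.
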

\begin{proof}
    A volatility call can be found by using~\ref{vol_call} together with its conditional expectation on the RHS can be computed by setting $K=K^c_{vol}$ and $\rho=1/2$ in~\eqref{expected_call} with coefficients $h_k$ for $k=0,1,2,\ldots$, given in~\eqref{coefficient_hk}. To obtain~\eqref{coefficient_hk}, we have $\ell=\tau_j/2$ for $j=0,1,2,\ldots$, which is set in~\eqref{generalmoment}. We then immediately obtain~\eqref{callSwap1_vol}.
\end{proof}


\subsubsection{Variance call option prices: \texorpdfstring{$C_{var}^1$}{}}
\medskip 

The following theorem is a closed-form formula for pricing a variance call option under time-varying log-return volatility.

\begin{Theorem} \label{thm_call1_var}
    Suppose that $\rho=1$. The variance call of a strike $K^c_{var}$ at time $t_1$ can be expressed as
    \begin{equation} \label{callSwap1_var}
        C_{var}^1 \equiv C_{var} \left( 1,t_1, N, T \right) =e^{-\int_{t_1}^Tr(s) \, \mathrm{d} s} (K^c_{var})^b e^{-K^c_{var}} \sum_{k=0}^\infty h_k \left( 1, t_1, N, T \right) \mathbf{L}_k^{(a)} (K^c_{var}),
    \end{equation}
    where $h_k \left( 1, t_1, N, T \right)$, $k=0,1,2,\ldots$, are calculated by using~\eqref{coefficient_hk} with $\mathbb{E}_{t_1}^\mathbb{Q}\big[ \, RV_d(t_1, N, T)^{\tau_j}\big]$ written in Theorem~\ref{thm_moment} where $\tau_j= a-b+j+2$ for $j=0,1,2,\ldots$ with scalars $a, b \in \mathbb{R}$ satisfying $a > 2\max(b,0)-1$.
\end{Theorem}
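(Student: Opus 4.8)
The plan is to mirror the argument for the volatility call in Theorem~\ref{thm_call1_vol}, specializing the Dufresne Laguerre expansion~\eqref{expected_call} to the exponent $\rho = 1$. First I would start from the definition~\eqref{var_call}, so that the pricing problem reduces to evaluating the conditional expectation $\mathbb{E}_{t_1}^\mathbb{Q}[(RV_d(t_1,N,T) - K^c_{var})^+]$ and then multiplying by the discount factor $e^{-\int_{t_1}^T r(s)\,\mathrm{d}s}$. Since~\eqref{int_var_call} shows this expectation is exactly the improper integral $\int_{K^c_{var}}^\infty (y - K^c_{var})\widebar{f}^{(\widebar{\beta},\widebar{\mu}_0)}_n(y)\,\mathrm{d}y$, substituting $K = K^c_{var}$ and $\rho = 1$ into the master formula~\eqref{expected_call} immediately produces the Laguerre series $\sum_{k=0}^\infty h_k(1,t_1,N,T)\mathbf{L}_k^{(a)}(K^c_{var})$, provided the admissibility constraint $a > 2\max(b,0)-1$ holds.

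Next I would unpack the coefficients $h_k(1,t_1,N,T)$ through their definition~\eqref{coefficient_hk}. With $\rho = 1$, the moment orders collapse to $\rho\tau_j = \tau_j = a - b + j + 2$, so each $h_k$ is a finite sum involving the raw moments $\mathbb{E}_{t_1}^\mathbb{Q}[RV_d^{\tau_j}]$ for $j = 0, 1, \ldots, k$. I would then invoke Theorem~\ref{thm_moment} with $\ell = \tau_j$ to obtain a closed-form generalized-hypergeometric expression for each such moment; this step is legitimate because Theorem~\ref{thm_moment} holds for all $\ell \in \mathbb{R}_+$, and under $a > 2\max(b,0)-1$ one checks that $\tau_j = a - b + j + 2 > 0$ for every $j \geq 0$ (both when $b \geq 0$ and when $b < 0$), so every required order lies in the admissible range. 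Assembling these moments back into~\eqref{coefficient_hk} and applying the discount factor then yields~\eqref{callSwap1_var}.

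The main obstacle I anticipate is not the algebra but the justification that the Laguerre expansion~\eqref{expected_call} genuinely converges and may be used term by term at $\rho = 1$. One has to confirm that $RV_d$, being the positively weighted linear combination of independent noncentral chi-square variables from~\eqref{rv}, possesses finite moments of every positive order $\tau_j$ (which it does, since a noncentral chi-square variate has moments of all orders), and that the Dufresne representation (Theorem~2.4 of~\cite{dufresne2000laguerre}) applies to the density $\widebar{f}^{(\widebar{\beta},\widebar{\mu}_0)}_n$ under the stated parameter constraint $a > 2\max(b,0)-1$. Because this representation and its convergence are taken as given from~\cite{dufresne2000laguerre}, the remainder is a direct specialization; the $\rho = 1$ case is in fact slightly cleaner than the $\rho = 1/2$ volatility call, since no square-root Jacobian transformation of the density is required and the moment orders $\tau_j$ already match the exponent $\ell$ appearing in Theorem~\ref{thm_moment} without any halving.
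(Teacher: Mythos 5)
Your proposal is correct and follows essentially the same route as the paper's own proof: both substitute $K=K^c_{var}$ and $\rho=1$ into the Dufresne expansion~\eqref{expected_call}, compute the coefficients $h_k$ via~\eqref{coefficient_hk} by setting $\ell=\tau_j$ in Theorem~\ref{thm_moment}, and attach the discount factor from~\eqref{var_call}. The only difference is that you make explicit the checks the paper leaves implicit (positivity of $\tau_j$ under the constraint $a>2\max(b,0)-1$ and the applicability of Dufresne's convergence result), which strengthens rather than alters the argument.
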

\begin{proof}
    We get a variance call by using~\eqref{var_call} and we get the conditional expectation on the RHS of~\eqref{var_call} by replacing $K=K^c_{var}$ and $\rho=1$ in~\eqref{expected_call}. On the RHS of~\eqref{expected_call}, coefficients $h_k$ for $k=0,1,2,\ldots$, defined in~\eqref{coefficient_hk}, are computed by letting $\ell=\tau_j$ for $j=0,1,2,\ldots$, in~\eqref{generalmoment}. Hence, we immediately obtain~\eqref{callSwap1_var}.
\end{proof}


\subsection{Pricing formulas for volatility and variance options: Constant log-return volatility} \label{sec_C_constant}
\medskip 

The interesting results of this subsection are other explicit solutions of volatility and variance call options, obtained by assuming that the variance of log-returns remains the same. Specifically, we set $\widebar{\sigma}^2_2 = \widebar{\sigma}^2_3 = \dots = \widebar{\sigma}^2_N$, following the approach introduced in Subsection~\ref{sec_K_constant}. Under this assumption, the realized variance is expressed as a linear combination of independent noncentral chi-square random variables, and its conditional moment is derived below.

Since $\widebar{W} \sim \mathcal{NC}_{\chi_{\eta}^2}\left(\widebar{\lambda}\right)$ defined in~\eqref{w}, its conditional moment order $\ell$, for $\ell > 0$, is obtained from Stuart et al.~\cite{stuart1999kendall} in explicit form as
\begin{equation} \label{generalmoment_w}
     \mathbb{E}_{t_1}
    ^\mathbb{Q}[\widebar{W}^{\ell}] = 2^{\ell} e^{\frac{-\widebar{\lambda}}{2}} \, \frac{\Gamma \! \left( \ell+ \frac{\eta}{2} \right)}{\Gamma \! \left(\frac{\eta}{2} \right)} \, {_1}\mathbf{F}_1 \! \! \left( \ell+ \frac{\eta}{2}; \frac{\eta}{2}; \frac{\widebar{\lambda}}{2} \right),
\end{equation}
where degrees of freedom $\eta$ and noncentrality parameter $\widebar{\lambda}$ are given in~\eqref{degree_of_freedom_w} and~\eqref{noncentrality_w}, respectively. Hence, a conditional moment of $RV_d(t_1, N, T)$ order $\ell$, for $\ell > 0$, is defined as
\begin{equation} \label{generalmoment_rv_noncentral}
    \mathbb{E}_{t_1}^\mathbb{Q}[RV_d(t_1, N, T)^{\ell}]:=\left( \frac{\widebar{\sigma}_N^2}{T} \times 100 \right)^{\ell} \! \mathbb{E}_{t_1}
    ^\mathbb{Q}\big[ \, \widebar{W}^{\ell}\big].
\end{equation}

For $\widebar{\lambda}=0$, $\widebar{W}$ is distributed according to the central chi-square distribution with degrees of freedom $\eta$. By using a property of the generalized hypergeometric function~\eqref{hyper_fun}, it is clear that a conditional moment of $RV_d(t_1, N, T)$ order $\ell$ for $\ell>0$, can be simplified as
\begin{equation} \label{generalmoment_rv_central}
    \mathbb{E}_{t_1}^\mathbb{Q}\big[ \, RV_d(t_1, N, T)^{\ell} \big]:=\left( \frac{2\widebar{\sigma}_N^2}{T} \times 100 \right)^{\ell} \frac{\Gamma \! \left( \ell + \frac{\eta}{2} \right)}{\Gamma \! \left(\frac{\eta}{2} \right)}.
\end{equation}


\subsubsection{Volatility call option prices: \texorpdfstring{$C_{vol}^2$}{}}
\medskip

Since $\sqrt{RV_d(t_1,N,T)}$ can be represented in terms of $\sqrt{\widebar{W}}$, the following theorem provides a closed-form formula for pricing a call volatility option under constant log-return volatility by applying~\eqref{generalmoment_w}.

\begin{Theorem} \label{thm_call2_vol}
    Suppose that $\rho=\frac{1}{2}$. The volatility call of a strike $K^c_{vol}$ at time $t_1$ can be expressed as
    \begin{equation} \label{callSwap2_vol}
        C_{vol}^2 \equiv C_{vol} \left( \frac{1}{2},t_1, N, T \right) =e^{-\int_{t_1}^Tr(s) \, \mathrm{d} s} (K^c_{vol})^b e^{-K^c_{vol}} \sum_{k=0}^\infty h_k \left( \frac{1}{2}, t_1, N, T \right) \mathbf{L}_k^{(a)} (K^c_{vol}),
    \end{equation}
    where $h_k \left( \frac{1}{2}, t_1, N, T \right)$, $k=0,1,2,\ldots$, are calculated by using~\eqref{coefficient_hk} with the following two conditions holding:
    \begin{enumerate}
        \item if \ \ $\displaystyle \sum^{N-1}_{i=2} \widebar{\mu}_i^2 > 0$ \ \ then \ \ $\mathbb{E}_{t_1}^\mathbb{Q}\big[ \, RV_d(t_1, N, T) ^{\frac{\tau_j}{2}} \big]$ is calculated by using~\eqref{generalmoment_rv_central}
          \vspace{2mm}
        \item if \ \ $\displaystyle \sum^{N-1}_{i=2} \widebar{\mu}_i^2 = 0$ \ \ then \ \ $\mathbb{E}_{t_1}^\mathbb{Q} \big[ \, RV_d(t_1, N, T) ^{\frac{\tau_j}{2}} \big]$ is calculated by using~\eqref{generalmoment_rv_noncentral}
    \end{enumerate}
    where $\frac{\tau_j}/{2}= a-b+j+2$ for $j=0,1,2,\ldots$, with scalars $a, b \in \mathbb{R}$ satisfying $a > 2\max(b,0)-1$.
\end{Theorem}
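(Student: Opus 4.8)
The plan is to reuse the generalized-Laguerre machinery already established for Theorem~\ref{thm_call1_vol}, changing only the source of the conditional moments. First I would start from the definition of the volatility call in~\eqref{vol_call} and set $\rho=\tfrac12$, $K=K^c_{vol}$ in Dufresne's expansion~\eqref{expected_call}; this collapses the whole computation onto the coefficients $h_k(\tfrac12,t_1,N,T)$ of~\eqref{coefficient_hk}, whose only nontrivial ingredient is the family of fractional moments $\mathbb{E}^{\mathbb{Q}}_{t_1}\big[RV_d^{\tau_j/2}\big]$ with $\tau_j/2=a-b+j+2$. At the outset I would record the admissibility hypothesis $a>2\max(b,0)-1$, which guarantees both that the expansion~\eqref{expected_call} applies and that $\tau_j/2>1$ for every $j\geq0$, so that the denominator factor $(\rho\tau_j-1)\rho\tau_j$ appearing in~\eqref{coefficient_hk} never vanishes and every $h_k$ is well defined.

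Next I would invoke the constant-volatility hypothesis $\widebar{\sigma}_2^2=\cdots=\widebar{\sigma}_N^2$. Under it, $RV_d$ is a scalar multiple of the single noncentral chi-square variable $\widebar{W}\sim\mathcal{NC}_{\chi^2_{\eta}}(\widebar{\lambda})$ of~\eqref{w}, so~\eqref{generalmoment_rv_noncentral} rewrites each required moment as $\big(\widebar{\sigma}_N^2/T\times100\big)^{\tau_j/2}\,\mathbb{E}^{\mathbb{Q}}_{t_1}\big[\widebar{W}^{\tau_j/2}\big]$, and the closed form~\eqref{generalmoment_w} supplies $\mathbb{E}^{\mathbb{Q}}_{t_1}\big[\widebar{W}^{\tau_j/2}\big]$ through a confluent hypergeometric ${_1}\mathbf{F}_1$. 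This handles the generic case with positive noncentrality, where $\widebar{\lambda}>0$ by~\eqref{noncentrality_w}. For the degenerate case $\sum_{i=2}^{N-1}\widebar{\mu}_i^2=0$ I would observe that $\widebar{\lambda}=0$, so $\widebar{W}$ is a central chi-square, ${_1}\mathbf{F}_1(\cdot;\cdot;0)=1$, and the moment reduces to the pure gamma-ratio form~\eqref{generalmoment_rv_central}. Feeding whichever of these two moment formulas is appropriate into~\eqref{coefficient_hk}, and reattaching the discount factor as in~\eqref{vol_call}, then delivers~\eqref{callSwap2_vol}.

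I do not expect a deep obstacle here: both ingredients — Dufresne's Laguerre expansion~\eqref{expected_call} and the noncentral chi-square moments~\eqref{generalmoment_w}, the latter already stated for every real order $\ell>0$ — are in hand, so the argument is essentially a substitution. The one point deserving care is the bookkeeping around the factor $(\rho\tau_j-1)\rho\tau_j$ in~\eqref{coefficient_hk}, where I would verify that the admissibility condition forces $\tau_j/2>1$ for all $j\geq0$. The only other thing to confirm, and what makes the two-case statement internally coherent, is that~\eqref{generalmoment_rv_noncentral} collapses exactly onto~\eqref{generalmoment_rv_central} as $\widebar{\lambda}\to0$; this is immediate from ${_1}\mathbf{F}_1(\ell+\tfrac{\eta}{2};\tfrac{\eta}{2};0)=1$, so treating $\widebar{\lambda}=0$ as a clean special case is a genuine simplification rather than a separate derivation.
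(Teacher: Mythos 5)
Your proposal is correct and follows essentially the same route as the paper's own proof: substitute $\rho=\tfrac12$, $K=K^c_{vol}$ into Dufresne's expansion~\eqref{expected_call}, and feed the noncentral chi-square moments~\eqref{generalmoment_rv_noncentral} (via~\eqref{generalmoment_w}) or their central reduction~\eqref{generalmoment_rv_central} into the coefficients~\eqref{coefficient_hk} according to whether $\widebar{\lambda}>0$ or $\widebar{\lambda}=0$. Note that your case assignment (noncentral formula when $\sum\widebar{\mu}_i^2>0$, central when it vanishes) agrees with the paper's proof and is the mathematically coherent one, even though the theorem statement as printed has the two labels swapped; your added check that $a>2\max(b,0)-1$ keeps $(\rho\tau_j-1)\rho\tau_j$ nonzero is a small but welcome extra.
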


\begin{proof}
    By using~\eqref{vol_call}, we obtain its conditional expectation on the RHS of~\eqref{vol_call} by setting $K=K^c_{vol}$ and $\rho=1/2$ in~\eqref{expected_call} with coefficients $h_k$ for $k=0,1,2,\ldots$, defined in~\eqref{coefficient_hk}. To obtain~\eqref{coefficient_hk}, if $\sum^{N-1}_{i=2} \widebar{\mu}_i^2 > 0$, $\widebar{\lambda} > 0$, then we calculate the conditional moment of $RV_d(t_1,N,T)$ of order $\ell$ by setting  $\ell=\tau_j/2$ for $j=0,1,2,\ldots$, in~\eqref{generalmoment_rv_noncentral}. On the other hand, for the case that $\sum^{N-1}_{i=2} \widebar{\mu}_i^2 = 0$, which implies $\widebar{\lambda} = 0$, we calculate it using~\eqref{generalmoment_rv_central}. We then obtain the result in~\eqref{callSwap2_vol}.
\end{proof}


\subsubsection{Variance call option prices: \texorpdfstring{$C_{var}^2$}{}}
\medskip

The following theorem is a closed-form formula for pricing a variance call option under constant log-return volatility by applying~\eqref{generalmoment_w}.

\begin{Theorem} \label{thm_call2_var}
    Suppose that $\rho=1$. The volatility call of a strike $K^c_{var}$ at time $t_1$ can be expressed as
    \begin{equation} \label{callSwap2_var}
        C_{var}^2 \equiv C_{var} \left( \frac{1}{2},t_1, N, T \right) =e^{-\int_{t_1}^Tr(s) \, \mathrm{d} s} (K^c_{var})^b e^{-K^c_{vol}} \sum_{k=0}^\infty h_k \left( \frac{1}{2}, t_1, N, T \right) \mathbf{L}_k^{(a)} (K^c_{var}),
    \end{equation}
    where $h_k \left( 1, t_1, N, T \right)$, $k=0,1,2,\ldots$, are calculated by using~\eqref{coefficient_hk} with the following two conditions holding: 
    \begin{enumerate}
      \item if \ \ $\displaystyle \sum^{N-1}_{i=2} \widebar{\mu}_i^2 > 0$ \ \ then \ \ $\mathbb{E}_{t_1}^\mathbb{Q} \big[ \, RV_d(t_1, N, T) ^{\tau_j} \big]$ \ \ is calculated by using~\eqref{generalmoment_rv_noncentral},
      \vspace{2mm}
      \item if \ \ $\displaystyle \sum^{N-1}_{i=2} \widebar{\mu}_i^2 = 0$ \ \ then \ \ $\mathbb{E}_{t_1}^\mathbb{Q}\big[ \, RV_d(t_1, N, T) ^{\tau_j} \big]$ is calculated by using~\eqref{generalmoment_rv_central},
    \end{enumerate}
    where $\tau_j= a-b+j+2$ for $j=0,1,2,\ldots$, with scalars $a, b \in \mathbb{R}$ satisfying $a > 2\max(b,0)-1$.
\end{Theorem}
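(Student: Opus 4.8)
The plan is to mirror the proof of Theorem~\ref{thm_call2_vol}, specializing the exponent to $\rho=1$. The starting point is the definition of the variance call in~\eqref{var_call}, whose discounting factor is deterministic and can be pulled out of the expectation; the entire content of the theorem therefore lies in evaluating $\mathbb{E}_{t_1}^\mathbb{Q}\big[(RV_d-K^c_{var})^+\big]$. I would invoke the generalized Laguerre expansion~\eqref{expected_call} with $K=K^c_{var}$ and $\rho=1$, which immediately reduces the computation to assembling the coefficients $h_k(1,t_1,N,T)$ from~\eqref{coefficient_hk}.

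Since $\rho=1$, the orders appearing in~\eqref{coefficient_hk} are $\tau_j=a-b+j+2$, so the only inputs still needed are the raw moments $\mathbb{E}_{t_1}^\mathbb{Q}\big[RV_d^{\tau_j}\big]$. Under the constant-volatility hypothesis $\widebar{\sigma}_2^2=\cdots=\widebar{\sigma}_N^2$, I would use the representation $RV_d=\tfrac{\widebar{\sigma}_N^2}{T}\,\widebar{W}\times100^2$ established in the proof of Theorem~\ref{thm_Kvar2_1}, together with $\widebar{W}\sim\mathcal{NC}_{\chi^2_{\eta}}(\widebar{\lambda})$ from~\eqref{w}, so that each required moment collapses to a moment of a single noncentral chi-square variable.

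The computation then splits into the two advertised cases. When $\sum_{i=2}^{N-1}\widebar{\mu}_i^2>0$ we have $\widebar{\lambda}>0$, and I would read off $\mathbb{E}_{t_1}^\mathbb{Q}[RV_d^{\tau_j}]$ from~\eqref{generalmoment_rv_noncentral}, that is, from the confluent-hypergeometric moment formula~\eqref{generalmoment_w}; when $\sum_{i=2}^{N-1}\widebar{\mu}_i^2=0$ we have $\widebar{\lambda}=0$, $\widebar{W}$ is central chi-square, and the moment reduces to the closed form~\eqref{generalmoment_rv_central}. Substituting the appropriate moment into~\eqref{coefficient_hk}, and then the resulting $h_k(1,t_1,N,T)$ back into~\eqref{expected_call} and~\eqref{var_call}, produces exactly~\eqref{callSwap2_var}.

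Because all of the analytic machinery is supplied by the earlier results, I expect no genuine difficulty in the algebra itself; the only points requiring care are verifying the admissibility condition $a>2\max(b,0)-1$ so that the expansion~\eqref{expected_call} converges and the moments $\mathbb{E}_{t_1}^\mathbb{Q}[RV_d^{\tau_j}]$ are finite for every order $\tau_j$ that occurs, and selecting the correct moment formula (noncentral versus central) in each branch. These bookkeeping checks are what I would treat as the main, if modest, obstacle.
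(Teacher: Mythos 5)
Your proposal is correct and follows essentially the same route as the paper: the paper's proof likewise substitutes $K=K^c_{var}$ and $\rho=1$ into~\eqref{expected_call}, assembles the coefficients $h_k$ via~\eqref{coefficient_hk} with $\ell=\tau_j$, and selects~\eqref{generalmoment_rv_noncentral} or~\eqref{generalmoment_rv_central} according to whether $\widebar{\lambda}>0$ or $\widebar{\lambda}=0$, exactly as you describe. Your version is in fact slightly more careful than the paper's, which simply defers to the proof of Theorem~\ref{thm_call2_vol}.
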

\begin{proof}
    We get a variance call by using~\eqref{var_call} and its conditional expectation on the RHS of~\eqref{var_call} by setting $K=K^c_{var}$ and $\rho=1$ in~\eqref{expected_call} with coefficients $h_k$ for $k=0,1,2,\ldots$, defined in~\eqref{coefficient_hk}. To obtain~\eqref{coefficient_hk}, we proceed with the same approach in the proof of the previous Theorem~\ref{thm_call2_vol}. But now we let $\ell=\tau_j$. We then obtain the result in~\eqref{callSwap2_var}.
\end{proof}


\subsubsection{Vega of \texorpdfstring{$C_{vol}^2$}{} and \texorpdfstring{$C_{var}^2$}{}}
\medskip

To obtain vega of call options presented in Theorems~\ref{thm_call2_vol} and~\ref{thm_call2_var}, we focus on its partial derivative on the RHS with respect to $\sigma$. We have that the parameter $\sigma$ appears only in $\widebar{\sigma}_N$, where $\widebar{\sigma}_N$ is an argument in $\mathbb{E}_{t_1}^\mathbb{Q}\big[ \, RV_d(t_1, N, T)^\ell \big]$, as defined in Subsection~\ref{sec_C_constant}. For a given $\ell=\rho \tau_j$ for $\rho=1/2,1$ and $j=0,1,2,\ldots$, we have coefficients $h_k$ for $k=0,1,2,\ldots$, as defined in~\eqref{coefficient_hk}. Then, the partial derivatives of $C^2_{vol}$ and $C^2_{var}$ can be expressed as an infinite summation series of $h'_k$, which $h'_k$ is defined as
\begin{equation} \label{diff_coefficient_hk}
    h'_k (\rho,t_1, N, T) :=\frac{\partial}{\partial \sigma} h_k (\rho,t_1, N, T) = \sum_{j=0}^k \frac{k! \, (-1)^j \, \frac{\partial}{\partial \sigma}  \mathbb{E}_{t_1}^\mathbb{Q}\big[ \, RV_d(t_1, N, T)^{\rho \tau_j}\big] }{\Gamma(j+a+1) \, j! \, (k-j)! \, (\rho \tau_j-1) \, \rho \tau_j}. 
\end{equation}

In the following theorems, we present $\frac{\partial}{\partial \sigma}  \mathbb{E}_{t_1}^\mathbb{Q} \big[ \, RV_d(t_1, N, T)^\ell \big]$ for two cases, characterized by the value of $\widebar{\lambda}$. 

\begin{Theorem} \label{thm_diff_generalmoment_rv_noncentral}
    According to~\eqref{generalmoment_rv_noncentral}, if \ $\displaystyle \sum^{N-1}_{i=2} \widebar{\mu}_i^2 > 0$ , then the partial derivative of the conditional moment of $RV_d(t_1, N, T)$ order $\ell$ for $\ell > 0$, is defined as
    \begin{align} \label{diff_moment_rv_noncentral}
        \begin{split}
        &\frac{\partial}{\partial \sigma} \mathbb{E}_{t_1}^\mathbb{Q}\big[ \, RV_d(t_1, N, T)^{\ell} \big] \\
        &= \frac{e^{\frac{-\widebar{\lambda}}{2}}}{\sigma} \left( \frac{2 \widebar{\sigma}_N^{2}}{T} \times 100 \right)^{\ell} 
        \frac{\Gamma \! \left( \ell+ \frac{\eta}{2} \right)}{\Gamma \!  \left(\frac{\eta}{2} \right)} 
        \left[ 
        \left( \widebar{\lambda} + 2\ell\right) {_1}\mathbf{F}_1 \! \! \left( \ell+ \frac{\eta}{2}; \frac{\eta}{2}; \frac{\widebar{\lambda}}{2} \right)
        -\widebar{\lambda} \frac{\left( \ell+ \frac{\eta}{2} \right)_1}{\left( \frac{ \eta}{2} \right)_1}{_1}\mathbf{F}_1 \! \! \left( 1+\ell+ \frac{\eta}{2}; 1+\frac{\eta}{2}; \frac{\widebar{\lambda}}{2} \right) 
        \right] \\
        &= \frac{\left( \widebar{\lambda} + 2\ell\right)}{\sigma} \mathbb{E}_{t_1}^\mathbb{Q}\big[ \, RV_d(t_1, N, T)^{\ell}\big] -\left( \frac{2 \widebar{\sigma}_N^{2}}{T} \times 100 \right)^{\ell} \frac{\widebar{\lambda} \, e^{\frac{-\widebar{\lambda}}{2}}}{\sigma} \frac{\Gamma \! \left( 1+\ell+ \frac{\eta}{2} \right)}{\Gamma \! \left(1+ \frac{ \eta}{2} \right)}{_1}\mathbf{F}_1 \! \! \left( 1+\ell+ \frac{\eta}{2}; 1+\frac{\eta}{2}; \frac{\widebar{\lambda}}{2} \right).
        \end{split}
    \end{align}
\end{Theorem}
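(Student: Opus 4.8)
The plan is to differentiate, term by term with respect to $\sigma$, the explicit moment expression obtained by combining \eqref{generalmoment_rv_noncentral} with \eqref{generalmoment_w}, namely
\begin{equation*}
\mathbb{E}_{t_1}^\mathbb{Q}\big[RV_d^{\ell}\big] = \left(\frac{2\widebar{\sigma}_N^2}{T}\times 100\right)^{\ell} e^{-\frac{\widebar{\lambda}}{2}}\, \frac{\Gamma\!\left(\ell+\frac{\eta}{2}\right)}{\Gamma\!\left(\frac{\eta}{2}\right)}\, {_1}\mathbf{F}_1\!\left(\ell+\frac{\eta}{2};\frac{\eta}{2};\frac{\widebar{\lambda}}{2}\right).
\end{equation*}
The key observation is that $\sigma$ enters this expression only through $\widebar{\sigma}_N$ and $\widebar{\lambda}$, so the computation reduces to three elementary chain-rule pieces multiplied by the $\sigma$-free gamma ratio.

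First I would record the two building-block derivatives. From \eqref{diff_volatility_z} we already have $\partial\widebar{\sigma}_N/\partial\sigma = \widebar{\sigma}_N/\sigma$. Differentiating the noncentrality parameter \eqref{noncentrality_w}, $\widebar{\lambda}=\sum_{i=2}^{N-1}\widebar{\mu}_i^2/\widebar{\sigma}_N^2$, and substituting this relation gives $\partial\widebar{\lambda}/\partial\sigma = -2\widebar{\lambda}/\sigma$, exactly as used in the proof of Corollary~\ref{col_vega_Kvar2}.

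Next I would apply the product rule to the three $\sigma$-dependent factors. Writing $P=\big(\tfrac{2\widebar{\sigma}_N^2}{T}\times100\big)^{\ell}$ for the power factor, one finds $\partial P/\partial\sigma = (2\ell/\sigma)P$, while the exponential gives $\partial e^{-\widebar{\lambda}/2}/\partial\sigma = (\widebar{\lambda}/\sigma)\,e^{-\widebar{\lambda}/2}$; together these two contributions produce the coefficient $(2\ell+\widebar{\lambda})/\sigma$ multiplying the original moment. For the confluent hypergeometric factor I would use the standard derivative identity $\frac{d}{dz}{_1}\mathbf{F}_1(a;b;z) = \frac{a}{b}\,{_1}\mathbf{F}_1(a+1;b+1;z)$ with $a=\ell+\tfrac{\eta}{2}$, $b=\tfrac{\eta}{2}$, $z=\tfrac{\widebar{\lambda}}{2}$, combined with $\partial z/\partial\sigma = -\widebar{\lambda}/\sigma$. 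This produces the term carrying the shifted parameters $1+\ell+\tfrac{\eta}{2}$ and $1+\tfrac{\eta}{2}$ and assembles the first displayed line of \eqref{diff_moment_rv_noncentral}.

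Finally, to pass to the second displayed form I would substitute the closed form of $\mathbb{E}_{t_1}^\mathbb{Q}[RV_d^{\ell}]$ back into the $(2\ell+\widebar{\lambda})/\sigma$ term and invoke the gamma identity $\frac{\ell+\eta/2}{\eta/2}\frac{\Gamma(\ell+\eta/2)}{\Gamma(\eta/2)} = \frac{\Gamma(1+\ell+\eta/2)}{\Gamma(1+\eta/2)}$ to absorb the Pochhammer ratio $(\ell+\tfrac{\eta}{2})_1/(\tfrac{\eta}{2})_1$ into the gamma prefactor of the second term. The only genuinely delicate step is the hypergeometric differentiation together with its chain-rule factor; everything else is bookkeeping. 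Care is needed with signs, since the exponential contributes $+\widebar{\lambda}/\sigma$ whereas the hypergeometric argument contributes $-\widebar{\lambda}/\sigma$, and conflating the two would corrupt the coefficient.
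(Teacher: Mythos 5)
Your proposal is correct and follows essentially the same route as the paper: differentiate the closed-form moment from~\eqref{generalmoment_rv_noncentral} via the product rule, using $\partial\widebar{\sigma}_N/\partial\sigma=\widebar{\sigma}_N/\sigma$, $\partial\widebar{\lambda}/\partial\sigma=-2\widebar{\lambda}/\sigma$, and the standard ${_1}\mathbf{F}_1$ differentiation identity with its chain-rule factor. The only cosmetic difference is that you split the product rule over three factors directly, whereas the paper groups $\widebar{\sigma}_N^{2\ell}e^{-\widebar{\lambda}/2}$ as a single piece first; the sign bookkeeping and the final Pochhammer-to-gamma absorption are identical.
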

\begin{proof}
    Suppose that $\sum^{N-1}_{i=2} \widebar{\mu}_i^2 > 0$. Then, there exists $\widebar{\lambda} > 0$ such that the partial derivative of the conditional moment of order $\ell$, given in~\eqref{generalmoment_rv_noncentral}, can be expressed as 
    \begin{align} \label{diff_generalmoment_rv_noncentral}
        \begin{split}
        &\frac{\partial}{\partial \sigma} \mathbb{E}_{t_1}^\mathbb{Q}\big[ \, RV_d(t_1, N, T)^{\ell}\big] \\
        &= \left( \frac{2}{T} \times 100 \right)^{\ell} \frac{\Gamma \! \left( \ell+ \frac{\eta}{2} \right)}{\Gamma \! \left(\frac{\eta}{2} \right)} 
        \left[
        \widebar{\sigma}_N^{2\ell} \, e^{\frac{-\widebar{\lambda}}{2}}  \frac{\partial}{\partial \sigma} {_1}\mathbf{F}_1 \! \! \left( \ell+ \frac{\eta}{2}; \frac{\eta}{2}; \frac{\widebar{\lambda}}{2} \right) 
        + {_1}\mathbf{F}_1 \! \! \left( \ell+ \frac{\eta}{2}; \frac{\eta}{2}; \frac{\widebar{\lambda}}{2} \right) \frac{\partial}{\partial \sigma} \left( \widebar{\sigma}_N^{2\ell} \, e^{\frac{-\widebar{\lambda}}{2}} \right) \right].
        \end{split}
    \end{align}
    From the above equation~\eqref{diff_generalmoment_rv_noncentral}), firstly, we consider the partial derivative of the hypergeometric function, by using its differentiation formula, see~\cite[Chapter 13]{olver2010nist}, we have
    \begin{align} \label{diff_1F1}
        \begin{split}
            \frac{\partial}{\partial \sigma} {_1}\mathbf{F}_1 \! \! \left( \ell+ \frac{\eta}{2}; \frac{\eta}{2}; \frac{\widebar{\lambda}}{2} \right) 
            &=\frac{\partial}{\partial \sigma} {_1}\mathbf{F}_1 \! \! \left( \ell+ \frac{\eta}{2}; \frac{\eta}{2}; \sum^{N-1}_{i=2} \frac{\widebar{\mu}^2_i}{2\widebar{\sigma}^2_N} \right) \\
            &= \frac{\partial}{\partial \sigma} {_1}\mathbf{F}_1 \! \! \left( \ell+ \frac{\eta}{2}; \frac{\eta}{2}; \mathrm{u} \right) \frac{\mathrm{d} \mathrm{u}}{\mathrm{d} \sigma} \\
            &= \frac{\left( \ell+ \frac{\eta}{2} \right)_1}{\left(\frac{\eta}{2} \right)_1} {_1}\mathbf{F}_1 \! \! \left( 1+\ell+ \frac{\eta}{2}; 1+\frac{\eta}{2}; \mathrm{u} \right) \frac{\mathrm{d} \mathrm{u}}{\mathrm{d} \sigma},
        \end{split}
    \end{align}
    where $\mathrm{u}=\sum^{N-1}_{i=2} \frac{\widebar{\mu}^2_i}{2\widebar{\sigma}^2_N}$. By applying~\eqref{diff_lambda_2}, we get $\frac{\mathrm{d} \mathrm{u}}{\mathrm{d} \sigma} = -\frac{\widebar{\lambda}}{\sigma}$. Secondly, we consider the partial derivative of the second term on the RHS of~\eqref{diff_generalmoment_rv_noncentral}, we have 
    \begin{equation} \label{diff_product}
        \frac{\partial}{\partial \sigma} \left( \widebar{\sigma}_N^{2\ell} e^{\frac{-\widebar{\lambda}}{2}} \right) = \widebar{\sigma}_N^{2\ell} \left( \frac{\partial}{\partial \sigma} e^{\frac{-\widebar{\lambda}}{2}} \right) + e^{\frac{-\widebar{\lambda}}{2}} \left( \frac{\partial}{\partial \sigma} \widebar{\sigma}_N^{2\ell} \right),
    \end{equation}
    where 
    \begin{equation} \label{diff_e}
        \frac{\partial}{\partial \sigma} e^{\frac{-\widebar{\lambda}}{2}} = \frac{\partial}{\partial \sigma} \exp{\left(-\sum^{N-1}_{i=2} \frac{\widebar{\mu}^2_i}{2\widebar{\sigma}^2_N}\right)} = \frac{\partial}{\partial \sigma} e^{- \mathrm{u}} = e^{- \mathrm{u}} \frac{\mathrm{d} }{\mathrm{d} \sigma} (\mathrm{-u}) = \frac{\widebar{\lambda}}{\sigma} e^{\frac{-\widebar{\lambda}}{2}},
    \end{equation}
    where $\frac{\mathrm{d} }{\mathrm{d} \sigma} (\mathrm{-u}) = \frac{\widebar{\lambda}}{\sigma}$ is derived in~\eqref{diff_lambda_2}, and
    \begin{equation} \label{diff_variance_z}
        \frac{\partial}{\partial \sigma} \widebar{\sigma}_N^{2\ell} = 2\ell \, \widebar{\sigma}_N^{2\ell-1} \left( \frac{\partial \widebar{\sigma}_N}{\partial \sigma} \right) =\frac{2\ell \, \widebar{\sigma}_N^{2\ell}}{\sigma},
    \end{equation}
    where $\frac{\partial \widebar{\sigma}_N}{\partial \sigma} = \frac{\widebar{\sigma}_N}{\sigma}$ is derived in~\eqref{diff_volatility_z}. Thus, the result~\eqref{diff_moment_rv_noncentral} can be simplified by substituting~\eqref{diff_e} and (\eqref{diff_variance_z}) into~\eqref{diff_product} which~\eqref{diff_1F1} and~\eqref{diff_product} are substituted into~\eqref{diff_generalmoment_rv_noncentral}.
\end{proof}

\begin{Theorem} \label{thm_diff_generalmoment_rv_central}
    According to~\eqref{generalmoment_rv_noncentral}, if \ $\displaystyle \sum^{N-1}_{i=2} \widebar{\mu}_i^2 = 0$, then the partial derivative of the conditional moment of $RV_d(t_1, N, T)$ order $\ell$ for $\ell > 0$ is defined as
    \begin{equation} \label{diff_moment_rv_central}
        \frac{\partial}{\partial \sigma} \mathbb{E}_{t_1}^\mathbb{Q}\big[ \, RV_d(t_1, N, T)^{\ell}\big]
        = \frac{2\ell}{\sigma} \left( \frac{2 \widebar{\sigma}_N^2}{T} \times 100 \right)^{\ell} \frac{\Gamma \! \left( \ell+ \frac{\eta}{2} \right)}{\Gamma \! \left(\frac{\eta}{2} \right)} = \frac{2\ell}{\sigma} \mathbb{E}_{t_1}^\mathbb{Q}\big[ \, RV_d(t_1, N, T)^{\ell}\big]. 
    \end{equation}
\end{Theorem}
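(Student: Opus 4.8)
The plan is to differentiate the closed-form moment \eqref{generalmoment_rv_central} directly, since under the hypothesis $\sum_{i=2}^{N-1}\widebar{\mu}_i^2=0$ we have $\widebar{\lambda}=0$ by \eqref{noncentrality_w}, so $\widebar{W}$ degenerates to a central chi-square and the conditional moment collapses to
\[
\mathbb{E}_{t_1}^\mathbb{Q}\big[\,RV_d(t_1,N,T)^\ell\big] = \left(\frac{2\widebar{\sigma}_N^2}{T}\times100\right)^\ell \frac{\Gamma\!\left(\ell+\tfrac{\eta}{2}\right)}{\Gamma\!\left(\tfrac{\eta}{2}\right)}.
\]
The first observation I would record is that the gamma ratio is independent of $\sigma$, because $\eta=N-1$ is a fixed integer not involving $\sigma$; likewise the constants $2/T$ and $100$ are $\sigma$-free. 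Hence the entire $\sigma$-dependence sits in the single factor $\widebar{\sigma}_N^{2\ell}$.

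Next I would isolate that factor and apply the chain rule exactly as in \eqref{diff_variance_z}: using $\partial\widebar{\sigma}_N/\partial\sigma=\widebar{\sigma}_N/\sigma$ from \eqref{diff_volatility_z} gives $\partial_\sigma\widebar{\sigma}_N^{2\ell}=2\ell\,\widebar{\sigma}_N^{2\ell}/\sigma$. Substituting this back and recognizing the original moment on the right-hand side yields
\[
\frac{\partial}{\partial\sigma}\mathbb{E}_{t_1}^\mathbb{Q}\big[\,RV_d^\ell\big]
= \frac{2\ell}{\sigma}\left(\frac{2\widebar{\sigma}_N^2}{T}\times100\right)^\ell\frac{\Gamma\!\left(\ell+\tfrac{\eta}{2}\right)}{\Gamma\!\left(\tfrac{\eta}{2}\right)}
= \frac{2\ell}{\sigma}\,\mathbb{E}_{t_1}^\mathbb{Q}\big[\,RV_d^\ell\big],
\]
which is precisely \eqref{diff_moment_rv_central}.

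An even cleaner route, which I would present as a cross-check, is to specialize Theorem~\ref{thm_diff_generalmoment_rv_noncentral} to $\widebar{\lambda}=0$. In \eqref{diff_moment_rv_noncentral} both confluent hypergeometric functions are evaluated at the argument $\widebar{\lambda}/2$, so at $\widebar{\lambda}=0$ they each equal $1$; the prefactor $(\widebar{\lambda}+2\ell)$ reduces to $2\ell$, and the whole second term carries an explicit factor $\widebar{\lambda}$ and therefore vanishes. What remains is exactly $\tfrac{2\ell}{\sigma}\,\mathbb{E}_{t_1}^\mathbb{Q}[RV_d^\ell]$. There is no genuine obstacle in either approach: the only point requiring care is the bookkeeping of which quantities depend on $\sigma$, after which the statement follows from a one-line chain-rule computation or the direct degeneration of the noncentral formula.
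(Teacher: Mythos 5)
Your main argument is correct and coincides with the paper's own proof: both observe that $\sum_{i=2}^{N-1}\widebar{\mu}_i^2=0$ forces $\widebar{\lambda}=0$, reduce to the central-moment expression~\eqref{generalmoment_rv_central}, note that only the factor $\widebar{\sigma}_N^{2\ell}$ depends on $\sigma$, and apply $\partial_\sigma\widebar{\sigma}_N^{2\ell}=2\ell\,\widebar{\sigma}_N^{2\ell}/\sigma$ from~\eqref{diff_variance_z}. Your additional cross-check via degenerating Theorem~\ref{thm_diff_generalmoment_rv_noncentral} at $\widebar{\lambda}=0$ is a sound consistency verification not present in the paper, but the core proof is the same one-line chain-rule computation.
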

\begin{proof}
    Suppose that $\sum^{N-1}_{i=2} \widebar{\mu}_i^2 = 0$, then $\widebar{\lambda}=0$. From the conditional moment given in~\eqref{generalmoment_rv_central}, it is easy to obtain its partial derivative that
    \begin{equation*} \label{diff_generalmoment_rv_central}
        \frac{\partial}{\partial \sigma} \mathbb{E}_{t_1}^\mathbb{Q}\big[ \, RV_d(t_1, N, T)^{\ell}\big]
        = \left( \frac{2}{T} \times 100 \right)^{\ell} \frac{\Gamma \! \left( \ell+ \frac{\eta}{2} \right)}{\Gamma \! \left(\frac{\eta}{2} \right)} \frac{\partial}{\partial \sigma} \widebar{\sigma}_N^{2\ell}.
    \end{equation*}
    By substituting $\frac{\partial}{\partial \sigma} \widebar{\sigma}_N^{2\ell} = \frac{2\ell \widebar{\sigma}_N^{2\ell}}{\sigma}$, as derived in~\eqref{diff_variance_z} from the proof of the previous Theorem~\ref{thm_diff_generalmoment_rv_noncentral}, we immediately obtain~\eqref{diff_moment_rv_central}.
\end{proof}

According to vega defined in the third-level Section~\ref{sec_vega_k}, the derivative instruments are considered as volatility and variance call options. In the following corollaries, exact formulas for vega of volatility and variance call options,   $\mathcal{V}_{C^2_{vol}}$ and $\mathcal{V}_{C^2_{vol}}$, are derived by using Theorems~\ref{thm_call2_vol} and~\ref{thm_call2_var}, respectively.     

\begin{Corollary} \label{col_vega_call2_vol}
    According to Theorem~\ref{thm_call2_vol}, we have
    \begin{equation} \label{vega_call2_vol}
        \mathcal{V}_{C^2_{vol}}=e^{-\int_{t_1}^Tr(s) \, \mathrm{d} s} (K^c_{vol})^b e^{-K^c_{vol}} \sum_{k=0}^\infty h'_k \! \left( \frac{1}{2}, t_1, N, T \right) \mathbf{L}_k^{(a)} (K^c_{vol}),
    \end{equation}
    where $h'_k \left( \frac{1}{2}, t_1, N, T \right)$, $k=0,1,2,\ldots$, are calculated by using~\eqref{diff_coefficient_hk} with the following two conditions holding:
    \begin{enumerate}
      \item if \ \ $\displaystyle \sum^{N-1}_{i=2} \widebar{\mu}_i^2 > 0$ \ \ then \ \ $\frac{\partial}{\partial \sigma}\mathbb{E}_{t_1}^\mathbb{Q}\big[ \, RV_d(t_1, N, T) ^{\frac{\tau_j}{2}} \big]$ \ \ is calculated by using Theorem~\ref{thm_diff_generalmoment_rv_noncentral}
      \vspace{2mm}
      \item if \ \ $\displaystyle \sum^{N-1}_{i=2} \widebar{\mu}_i^2 = 0$ \ \ \ then \ \ $\frac{\partial}{\partial \sigma}\mathbb{E}_{t_1}^\mathbb{Q}\big[ \, RV_d(t_1, N, T) ^{\frac{\tau_j}{2}}\big]$ \ \ is calculated by using Theorem~\ref{thm_diff_generalmoment_rv_central} ,
    \end{enumerate}
    where $\frac{\tau_j}{2}= a-b+j+2$ for $j=0,1,2,\ldots$, with scalars $a, b \in \mathbb{R}$ satisfying $a > 2\max(b,0)-1$.
\end{Corollary}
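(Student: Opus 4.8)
The plan is to obtain the vega by directly differentiating the closed-form volatility-call price $C^2_{vol}$ furnished by Theorem~\ref{thm_call2_vol}, namely the representation~\eqref{callSwap2_vol}, with respect to the price volatility $\sigma$. The crucial structural observation I would record first is that, among the factors appearing in~\eqref{callSwap2_vol}, the discount factor $e^{-\int_{t_1}^T r(s)\,\mathrm{d} s}$, the strike-dependent prefactor $(K^c_{vol})^b e^{-K^c_{vol}}$, and each Laguerre function $\mathbf{L}_k^{(a)}(K^c_{vol})$ are entirely independent of $\sigma$, since the risk-free rate and the contractual strike are fixed data of the contract. Consequently all of the $\sigma$-dependence is carried by the coefficients $h_k\!\left(\tfrac12,t_1,N,T\right)$ alone, and these depend on $\sigma$ only through the conditional moments $\mathbb{E}_{t_1}^\mathbb{Q}\big[RV_d^{\tau_j/2}\big]$ entering their finite-sum definition~\eqref{coefficient_hk}.

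Granting for the moment that term-by-term differentiation of the series is legitimate, I would next write
\begin{equation*}
    \mathcal{V}_{C^2_{vol}} = \frac{\partial C^2_{vol}}{\partial \sigma} = e^{-\int_{t_1}^T r(s)\,\mathrm{d} s}(K^c_{vol})^b e^{-K^c_{vol}} \sum_{k=0}^\infty \frac{\partial}{\partial\sigma} h_k\!\left(\tfrac12,t_1,N,T\right)\, \mathbf{L}_k^{(a)}(K^c_{vol}),
\end{equation*}
and then identify $\partial h_k/\partial\sigma$ with $h'_k\!\left(\tfrac12,t_1,N,T\right)$ exactly as defined in~\eqref{diff_coefficient_hk}; this substitution reproduces the asserted expression~\eqref{vega_call2_vol} verbatim. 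The only remaining content is to supply the quantity $\partial/\partial\sigma\,\mathbb{E}_{t_1}^\mathbb{Q}\big[RV_d^{\tau_j/2}\big]$ occurring inside~\eqref{diff_coefficient_hk}, and here I would simply invoke the two cases already established: when $\sum_{i=2}^{N-1}\widebar{\mu}_i^2>0$ (equivalently $\widebar{\lambda}>0$) the derivative is given by Theorem~\ref{thm_diff_generalmoment_rv_noncentral}, while when $\sum_{i=2}^{N-1}\widebar{\mu}_i^2=0$ (so $\widebar{\lambda}=0$) it is given by Theorem~\ref{thm_diff_generalmoment_rv_central}. Plugging the appropriate formula into~\eqref{diff_coefficient_hk} completes the identification of $h'_k$ in each regime, matching the two itemized conditions of the corollary.

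The one genuinely nontrivial step, and the step I expect to be the main obstacle, is justifying the interchange of $\partial/\partial\sigma$ with the infinite summation over $k$. In the same spirit as the proof of Theorem~\ref{thm_moment}, where uniform convergence was used to pass a limiting operation through the Laguerre series, I would argue that the differentiated series $\sum_k h'_k\,\mathbf{L}_k^{(a)}(K^c_{vol})$ converges uniformly on a neighbourhood of the working value of $\sigma$. Because the corollary sits in the constant-volatility regime, each conditional moment entering $h_k$ is one of the explicit closed forms~\eqref{generalmoment_rv_noncentral} or~\eqref{generalmoment_rv_central}, and its $\sigma$-derivative is the equally explicit expression from Theorem~\ref{thm_diff_generalmoment_rv_noncentral} or~\ref{thm_diff_generalmoment_rv_central}; consequently each $h_k$ and each $h'_k$ is a finite, smooth sum, so the formal term-by-term derivative is well defined. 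The cleanest route is then to dominate $\big|h'_k\,\mathbf{L}_k^{(a)}(K^c_{vol})\big|$ by a summable majorant that is locally uniform in $\sigma$, so that differentiation may be carried inside the sum; once such a dominating bound is secured, the stated formula~\eqref{vega_call2_vol} follows immediately.
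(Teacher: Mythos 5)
Your proposal follows essentially the same route as the paper's own proof: differentiate \eqref{callSwap2_vol} term by term, observe that only the coefficients $h_k$ depend on $\sigma$, identify $\partial h_k/\partial\sigma$ with $h'_k$ from \eqref{diff_coefficient_hk}, and supply the moment derivatives from Theorem~\ref{thm_diff_generalmoment_rv_noncentral} or Theorem~\ref{thm_diff_generalmoment_rv_central} according to whether $\widebar{\lambda}>0$ or $\widebar{\lambda}=0$. Your explicit flagging of the interchange of differentiation with the infinite sum is a point of rigor the paper's proof passes over in silence, but it does not change the argument.
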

\begin{proof}
    Due to Theorem~\ref{thm_call2_vol}, the partial derivative of $C^2_{vol}$ with respect to $\sigma$ can be illustrated as an infinite summation series of coefficients $h'_k$, $k=0,1,2,\ldots$, as defined in~\eqref{diff_coefficient_hk}. For a given $\rho=1/2$ in~\eqref{diff_coefficient_hk}, if $\sum^{N-1}_{i=2} \widebar{\mu}_i^2 > 0$, we compute $\frac{\partial}{\partial \sigma} \mathbb{E}_{t_1}^\mathbb{Q}\big[ \, RV_d(t_1, N, T)^\ell\big]$ using Theorem~\ref{thm_diff_generalmoment_rv_noncentral}. On the other hand, if $\sum^{N-1}_{i=2} \widebar{\mu}_i^2 = 0$, we compute it using Theorem~\ref{thm_diff_generalmoment_rv_central}. Finally, we obtain a result in~\eqref{vega_call2_vol} by setting $\ell=\tau_j$ for $j=0,1,2,\ldots$.
\end{proof}

\begin{Corollary} \label{col_vega_call2_var}
    According to Theorem~\ref{thm_call2_var}, we have
    \begin{equation} \label{vega_call2_var}
        \mathcal{V}_{C^2_{var}}=e^{-\int_{t_1}^Tr(s) \, \mathrm{d} s} (K^c_{vol})^b e^{-K^c_{vol}} \sum_{k=0}^\infty h'_k \left( 1, t_1, N, T \right) \, \mathbf{L}_k^{(a)} (K^c_{vol}),
    \end{equation}
    where $h'_k \left( 1, t_1, N, T \right)$, $k=0,1,2,\ldots$, are calculated by using~\eqref{diff_coefficient_hk} with the following two conditions holding:
    \begin{enumerate}
      \item if \ \ $\displaystyle \sum^{N-1}_{i=2} \widebar{\mu}_i^2 > 0$ \ \ then \ \ $\frac{\partial}{\partial \sigma}\mathbb{E}_{t_1}^\mathbb{Q}\big[ \, RV_d(t_1, N, T) ^{\tau_j}\big]$ \ \ is calculated by using Theorem~\ref{thm_diff_generalmoment_rv_noncentral}
      \vspace{2mm}
      \item if \ \ $\displaystyle \sum^{N-1}_{i=2} \widebar{\mu}_i^2 = 0$ \ \ then \ \ $\frac{\partial}{\partial \sigma}\mathbb{E}_{t_1}^\mathbb{Q}\big[ \, RV_d(t_1, N, T) ^{\tau_j}\big]$ \ \ is calculated by using Theorem~\ref{thm_diff_generalmoment_rv_central} ,
    \end{enumerate}
    where $\tau_j= a-b+j+2$ for $j=0,1,2,\ldots$, with scalars $a, b \in \mathbb{R}$ satisfying $a > 2\max(b,0)-1$.
\end{Corollary}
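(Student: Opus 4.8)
The plan is to obtain $\mathcal{V}_{C^2_{var}} = \partial C^2_{var}/\partial\sigma$ by differentiating the closed-form price from Theorem~\ref{thm_call2_var} term by term, exactly paralleling the $\rho=1/2$ computation already carried out in Corollary~\ref{col_vega_call2_vol}. First I would inspect the structure of~\eqref{callSwap2_var}: the discount factor $e^{-\int_{t_1}^{T} r(s)\,\mathrm{d}s}$, the prefactor $(K^c_{var})^b e^{-K^c_{var}}$, and the Laguerre polynomials $\mathbf{L}_k^{(a)}(K^c_{var})$ carry no dependence on $\sigma$. The volatility enters solely through the coefficients $h_k(1,t_1,N,T)$ of~\eqref{coefficient_hk}, and within them only through the conditional moments $\mathbb{E}_{t_1}^{\mathbb{Q}}[RV_d^{\tau_j}]$, since in the constant-volatility regime of Subsection~\ref{sec_C_constant} the parameter $\sigma$ appears only via $\widebar{\sigma}_N$ (and hence $\widebar{\lambda}$).

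Next I would differentiate the series in~\eqref{callSwap2_var} term by term, which replaces each $h_k$ by its $\sigma$-derivative $h'_k(1,t_1,N,T)$ defined in~\eqref{diff_coefficient_hk}. The only nontrivial ingredient of $h'_k$ is $\tfrac{\partial}{\partial\sigma}\mathbb{E}_{t_1}^{\mathbb{Q}}[RV_d^{\rho\tau_j}]$ with $\rho=1$, and this is supplied directly by the two preceding theorems, split according to the noncentrality parameter: if $\sum_{i=2}^{N-1}\widebar{\mu}_i^2>0$ (so $\widebar{\lambda}>0$) the moment derivative is given by Theorem~\ref{thm_diff_generalmoment_rv_noncentral}, while if $\sum_{i=2}^{N-1}\widebar{\mu}_i^2=0$ (so $\widebar{\lambda}=0$) it is given by Theorem~\ref{thm_diff_generalmoment_rv_central}. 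Setting $\ell=\tau_j$ for $j=0,1,2,\ldots$ then assembles $h'_k$ and reproduces precisely~\eqref{vega_call2_var}, this being the $\rho=1$ counterpart of the volatility-call vega.

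The main obstacle I anticipate is not a new calculation but the rigorous justification of interchanging $\partial/\partial\sigma$ with the infinite sum over $k$ in~\eqref{callSwap2_var}. I would argue this from the observation that $\sigma$ enters the summand only through the real-analytic maps $\sigma\mapsto\widebar{\sigma}_N$ and $\sigma\mapsto\widebar{\lambda}$, so each coefficient $h_k$ is a smooth function of $\sigma$ whose derivative is exactly the $h'_k$ of~\eqref{diff_coefficient_hk}, with the closed-form moment derivatives furnished by Theorems~\ref{thm_diff_generalmoment_rv_noncentral}--\ref{thm_diff_generalmoment_rv_central}. Combined with the convergence of the generalized Laguerre expansion that underlies~\eqref{expected_call}, this yields uniform convergence of the differentiated series on a neighborhood of the chosen $\sigma$, legitimizing term-by-term differentiation. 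Once this interchange is validated, the remaining manipulations coincide with those already performed for the volatility call in Corollary~\ref{col_vega_call2_vol}, and the corollary follows.
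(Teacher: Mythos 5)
Your proposal is correct and follows essentially the same route as the paper: differentiate the series in~\eqref{callSwap2_var} term by term, note that $\sigma$ enters only through the conditional moments inside $h_k$, replace each $h_k$ by the $h'_k$ of~\eqref{diff_coefficient_hk}, and supply $\frac{\partial}{\partial\sigma}\mathbb{E}_{t_1}^{\mathbb{Q}}\big[RV_d^{\tau_j}\big]$ from Theorem~\ref{thm_diff_generalmoment_rv_noncentral} or Theorem~\ref{thm_diff_generalmoment_rv_central} according to whether $\widebar{\lambda}>0$ or $\widebar{\lambda}=0$, with $\ell=\rho\tau_j$ and $\rho=1$. Your added discussion justifying the interchange of $\partial/\partial\sigma$ with the infinite sum is a point the paper leaves implicit, so your write-up is if anything slightly more careful.
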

\begin{proof}
    It is easy to obtain~\eqref{vega_call2_var} by following the previous proof of Corollary~\ref{col_vega_call2_vol} but now the parameter $\ell$ is replaced by $1$.
\end{proof}


\section{Numerical results and discussion} \label{sec_application}
\medskip

As demonstrated in Section~\ref{sec_pdf_rv}, the theoretical frameworks developed in this paper yield new explicit formulas for computing the PDF of $RV_d$, as defined in equation~\eqref{def_rv}, along with its conditional moments. This includes an analytical formula for pricing volatility and variance swaps and options under Schwartz one-factor model, as derived in Section~\ref{sec_pricing_swaps} and~\ref{sec_pricing_option} respectively. In practice, we automatically question whether these newly derived explicit formulas are accurate and efficient, especially considering that an infinite sum has to be truncated. To ensure that there are no algebraic errors in the derivation process, we thoroughly examine the accuracy of our explicit formulas. Additionally, to demonstrate the efficiency of our explicit formulas compared to MC simulations, we conduct a series of numerical examples coded in MATHEMATICA 13 and MATLAB R2024b, performed on a computer notebook with the following specifications: Processor: 2 GHz Quad-Core Intel Core i5, Memory: 16 GB 3733 MHz LPDDR4X, Operating system: macOS 14.2.1 (23C71).


\subsection{Accuracy of the PDF of realized variance}
\medskip

To show the accuracy for our explicit formula of the PDF of $RV_d$, denoted by $\widebar{f}_\nu^{(\widebar{\beta},\widebar{\mu}_0)} (y)$, as defined in~\eqref{pdf_rv}, we choose $\widebar{\mu}_0=\nu/2$ and then $\widebar{\beta}> (1/2) \max_{i \in \{2,\ldots,N\}} \widebar{\alpha}_i$. We set $\widebar{\beta}=\max_{i \in \{2,\ldots,N\}} \widebar{\alpha}_i$ to hold for all our numerical experiments.

\begin{Example} \label{ex.1}
    In this example, we illustrate the shapes of our explicit formula for the PDFs of $RV_d$ by plotting $\sum_{i=2}^{N} \widebar{\alpha}_i \widebar{Y}_i$, which are varied by the number of observations $N$.
\end{Example}

By setting $T=1$, the shapes of the PDFs of $\sum_{i=2}^{N} \widebar{\alpha}_i \widebar{Y}_i$ differ across ten observation cases: $N=2,3,4,5,7,10,15,22$. Let $\Delta t=\frac{T}{N-1}, t_1=0$ and $t_i=(i-1)\Delta t$ for $i=2,\ldots,N$. We assume that the parameters of the SDE in~\eqref{sde_xt} are given by the following: $S_0=2, \mu=0.6, \sigma=0.1$ and $\kappa=0.5$, and $\alpha$ is then computed using~\eqref{alpha}. The parameters $\widebar{\mu}_i, \widebar{\sigma}_i, \nu$ and $ \widebar{\delta}_i$ are then computed using~\eqref{mean_z},~\eqref{variance_z},~\eqref{degree_of_freedom} and~\eqref{noncentrality}, respectively. The value of $\widebar{\alpha}_i$ is obtained from~\eqref{alpha_i}, which implies the value of $\widebar{\beta}$. By using these values, the coefficients $\widebar{c}_k$ can be computed via the recurrence relations given in~\eqref{recurrentc0}--\eqref{recurrentdj}. We then obtain the various shapes of the PDFs of $\sum_{i=2}^{N} \widebar{\alpha}_i \widebar{Y}_i$ as shown in the following Figure~\ref{1a}--\ref{1b}.

\begin{figure}[!ht]
    \centering
    \begin{subfigure}[!ht]{0.495\textwidth}
        \includegraphics[width=\textwidth]{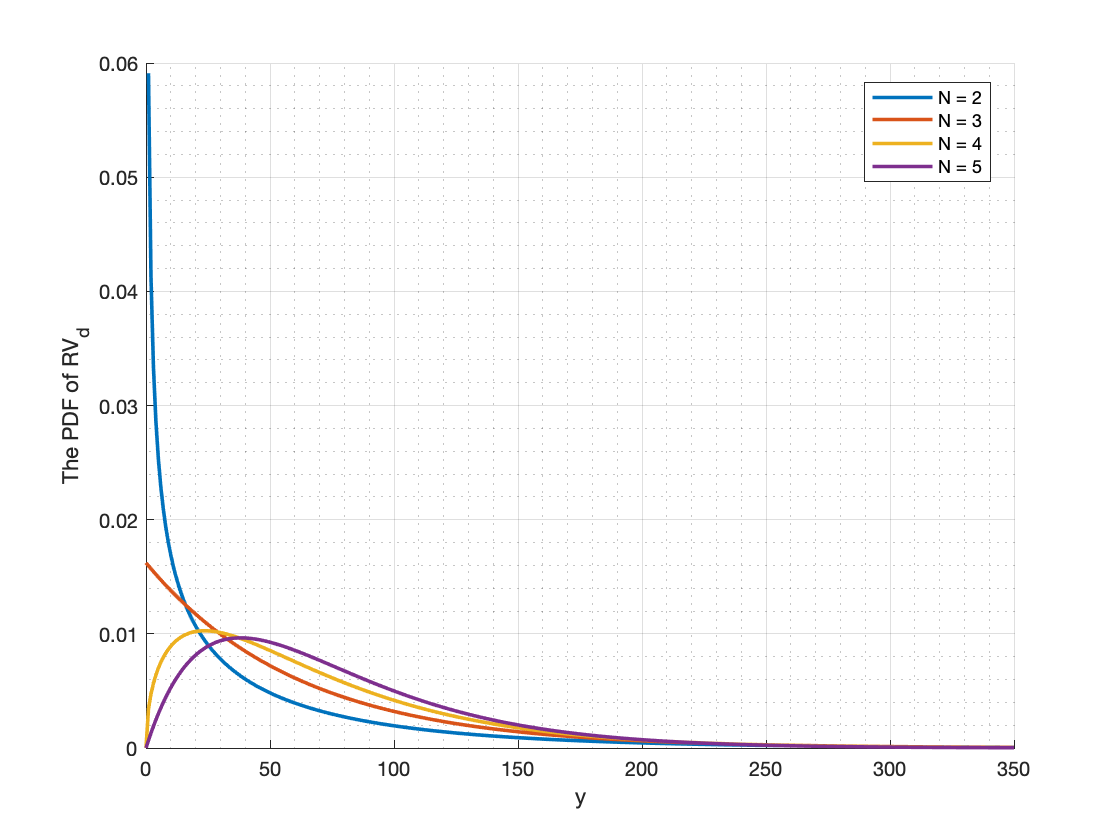}
        \caption{The PDFs of $\sum_{i=2}^{N} \widebar{\alpha}_i \widebar{Y}_i$ for $N=2,\ldots,5$.}
        \label{1a}
    \end{subfigure}
    \begin{subfigure}[!ht]{0.495\textwidth}
        \includegraphics[width=\textwidth]{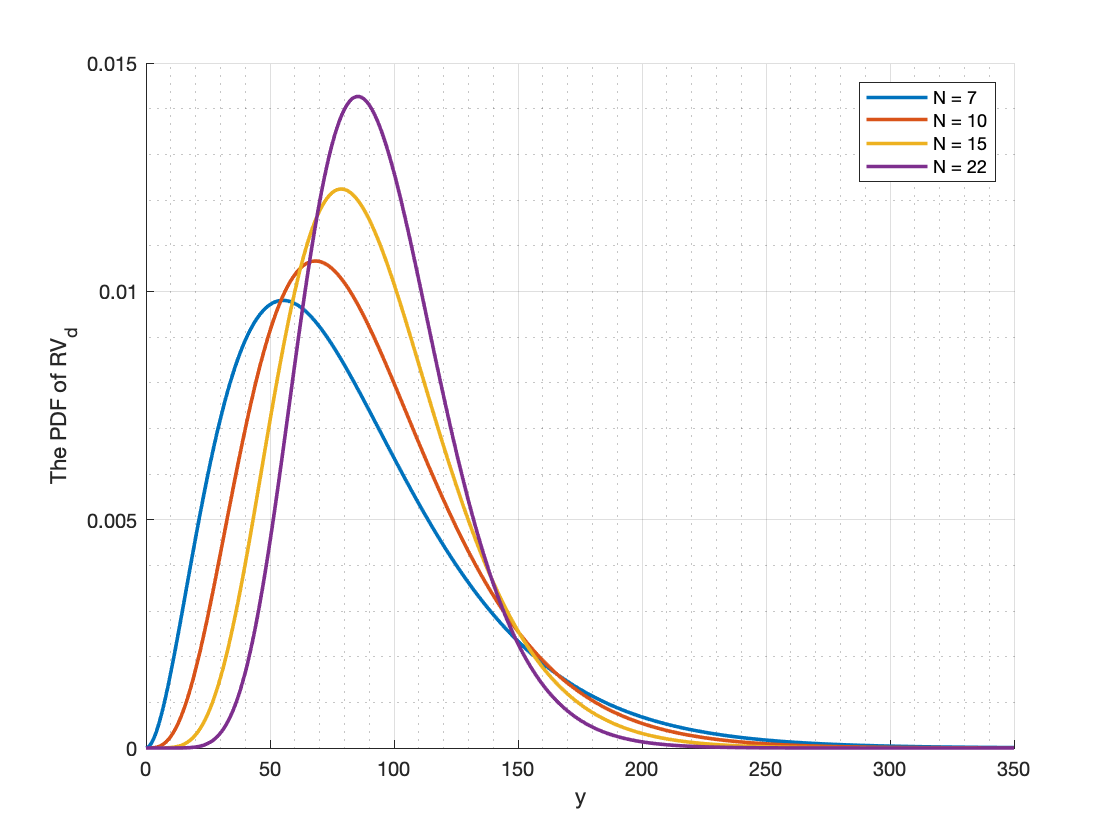}
        \caption{The PDFs of $\sum_{i=2}^{N} \widebar{\alpha}_i \widebar{Y}_i$ for $N=7,10,15,22$.}
        \label{1b}
    \end{subfigure}
    \caption{The PDFs of $\sum_{i=2}^{N} \widebar{\alpha}_i \widebar{Y}_i$ in which $\widebar{Y}_i \sim \mathcal{NC}_{\chi^2_1}(\widebar{\delta}_i)$ for $i=2,\ldots,N$, and the value of parameters $\widebar{\alpha}_i$ and $\widebar{\delta}_i$ can be computed by~\eqref{alpha_i} and~\eqref{noncentrality}, respectively.}
\label{fig1}
\end{figure}

\begin{Example}
    To demonstrate the accuracy and efficiency of the Laguerre expansions for the PDFs of $RV_d$ as defined in~\eqref{pdf_rv}, we compare the analytical expression $\widebar{f}_\nu^{(\widebar{\beta},\widebar{\mu}_0)} (y)$ with the approximate PDFs of $RV_d$ obtained from MC simulations. In this example, the approximate PDFs are constructed from $N_p$ sample paths generated by simulating the SDE~\eqref{sde_xt}. To ensure high accuracy in the MC-based approximation, we set $N_p=10^5$.
\end{Example}

To compare the analytical and approximate PDFs of $RV_d$, we set all parameters to the same values as in Example~\ref{ex.1}, except for the price volatility $\sigma$, which is set to $0.08$, and $0.1$, and the number of observations $N$, which is set to $52$, and $252$. The results are presented in the following figure.
\begin{figure}[!ht]
    \centering
    \begin{subfigure}[!ht]{0.496\textwidth}
        \includegraphics[width=\textwidth]{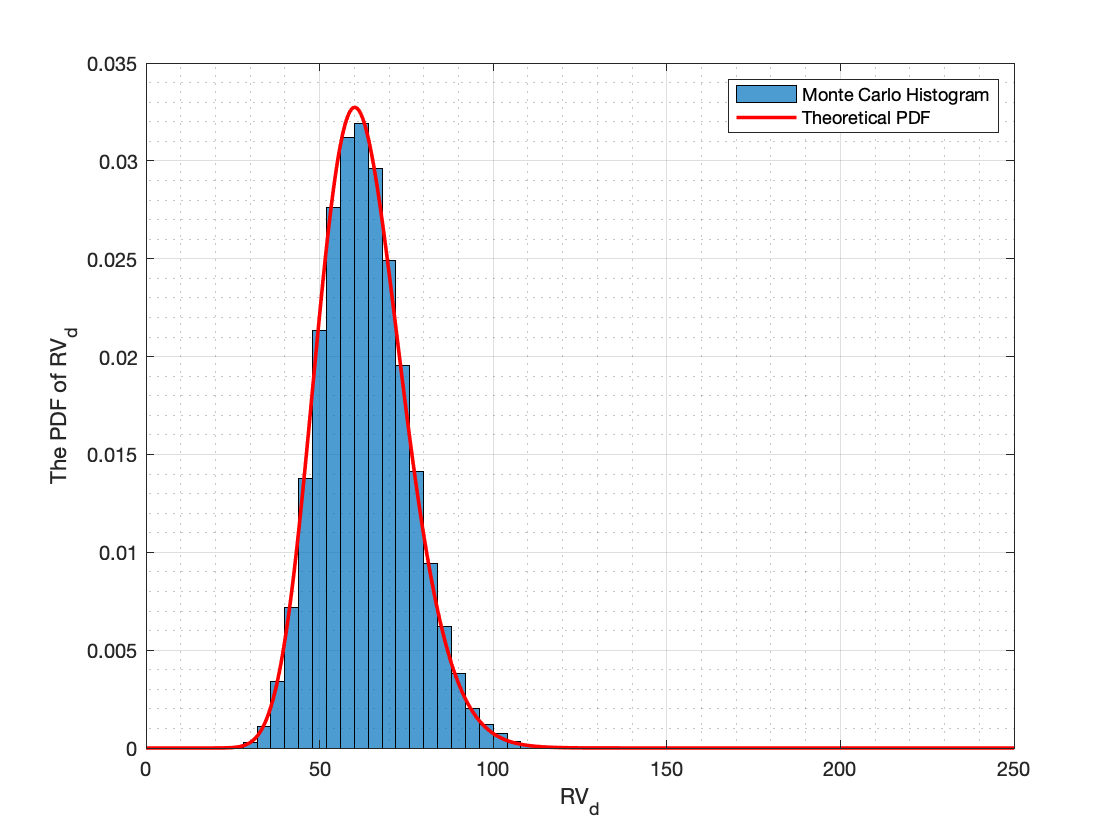}
        \caption{$RV_d: N=52, \sigma=0.08$.}
        \label{2a}
    \end{subfigure}
        \begin{subfigure}[!ht]{0.496\textwidth}
        \includegraphics[width=\textwidth]{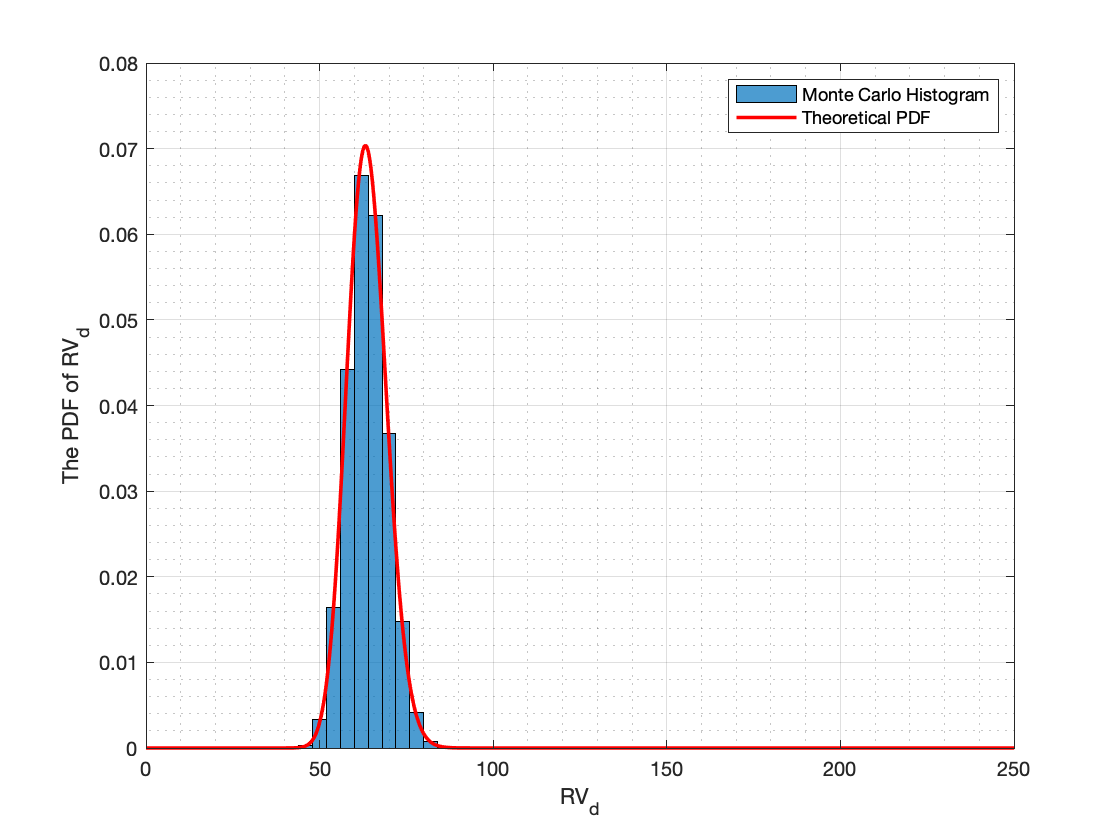}
        \caption{$RV_d: N=252, \sigma=0.08$.}
        \label{2b}
    \end{subfigure}
        \begin{subfigure}[!ht]{0.496\textwidth}
        \includegraphics[width=\textwidth]{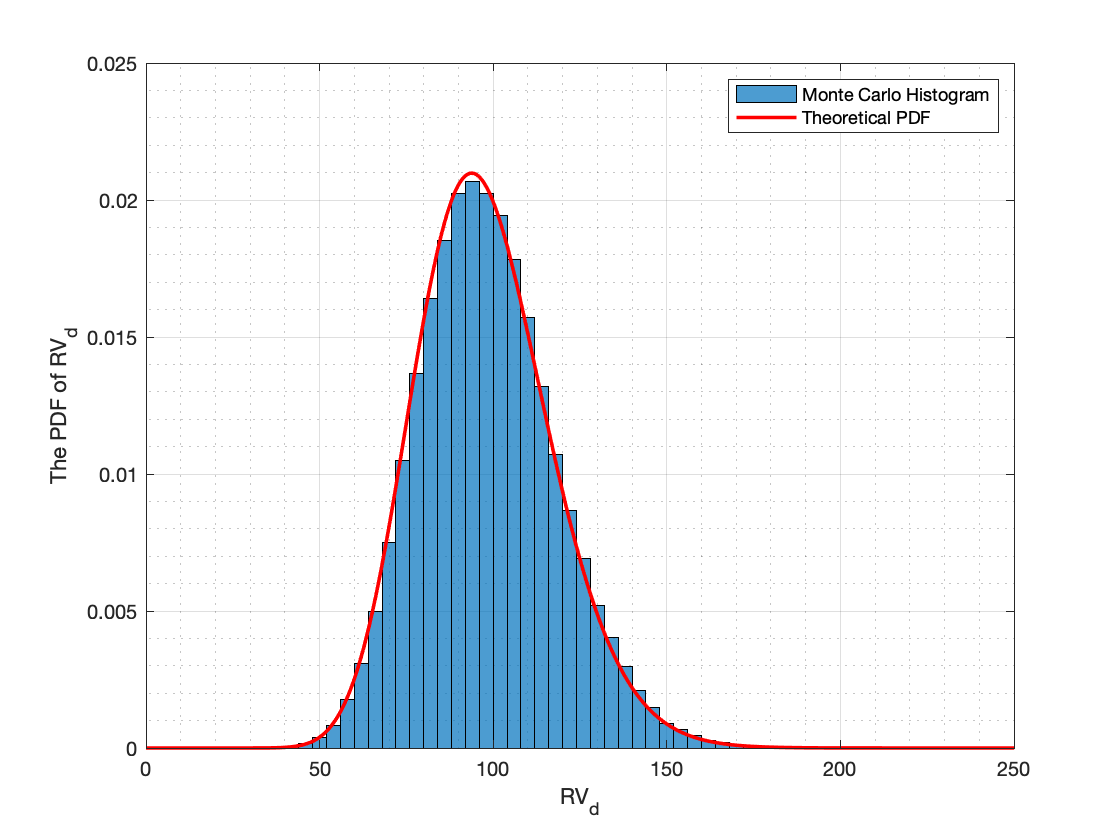}
        \caption{$RV_d: N=52, \sigma=0.1$.}
        \label{2c}
    \end{subfigure}
        \begin{subfigure}[!ht]{0.496\textwidth}
        \includegraphics[width=\textwidth]{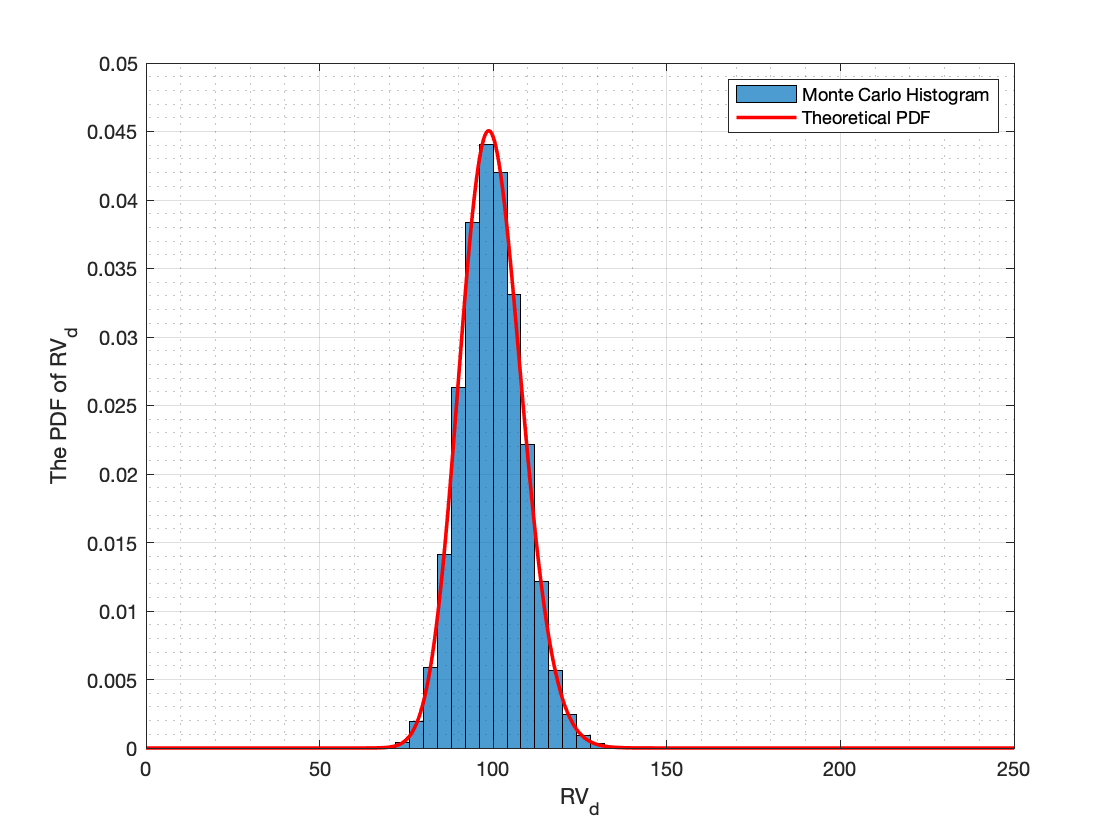}
        \caption{$RV_d: N=252, \sigma=0.1$.}
        \label{2d}
    \end{subfigure}
    \caption{The PDF of $RV_d(t_1,N,T)$ obtained by using the Laguerre expansion~\eqref{pdf_rv} whereas the corresponding histogram computed by MC simulation.}
\label{fig2}
\end{figure}
It is clearly observed that the PDFs of $RV_d$ obtained from the Laguerre expansions~\eqref{pdf_rv} are consistent with those corresponding histograms obtained from MC simulations for all $N=52,252$ and $\sigma=0.08,0.10$, as shown in Figure~\ref{2a}--\ref{2d}.

\begin{Example} \label{ex.3}
    In this example, we investigate the absolute values of truncation errors on fair strike prices of $\sqrt{RV_d}$ as presented in Theorem~\ref{thm_abs_error}. For given $K+1$ as a number of terms in the finite summation series~\eqref{KvolSwap1_1}.
\end{Example}

By setting all parameters to the same values as in Example~\ref{ex.1}, except for $N = 252$ and varying $\sigma$ which are set to $0.05$, $0.06$, $0.07$, $0.08$, $0.09$, and $0.1$, we evaluate the accuracy and efficiency of $K^1_{vol_1}$. Specifically, five sequences of truncation errors $\varepsilon_{K,\infty}^{(\widebar{\beta}, \widebar{\mu}_0)} (\ell,\nu)$ are computed for three different values of the degree of mean reversion $\kappa$, i.e. $\kappa_1=0.5, \kappa_2=1.5$, and $\kappa_3=3.0$, using equation~\eqref{abs_error}, as summarized in Table~\ref{table_truncation_error}. By choosing $K = 3$, we obtain
\begin{equation*}
    \left| \varepsilon_{3,\infty}^{(\widebar{\beta}, \widebar{\mu}_0)} (\ell,\nu) \right| = \left| K^1_{vol_1}-K^1_{{vol_1},3} \right| < 2.3101\text{E}-08
\end{equation*}
for all values of $\sigma$ and $\kappa$, indicating that $K = 3$ yields sufficiently accurate results. Therefore, we adopt $K = 3$ in our subsequent experiments.

\begin{table}[h]
    \centering
    \begin{tabular}{cccccccc}
        \toprule
        $\kappa$ & $K$ & $\sigma = 0.05$ & $\sigma = 0.06$ & $\sigma = 0.07$ & $\sigma = 0.08$ & $\sigma = 0.09$ & $\sigma = 0.10$ \\
        \midrule
        \multirow{4}{*}{0.5}
        & 0 & 5.0621E-03 & 4.0908E-03 & 3.3784E-03 & 2.8286E-03 & 2.3877E-03 & 2.0238E-03 \\
        & 1 & 2.4812E-06 & 1.3472E-06 & 7.8533E-07 & 4.7996E-07 & 3.0264E-07 & 1.9458E-07 \\
        & 2 & 2.3443E-09 & 8.5308E-10 & 3.4989E-10 & 1.5546E-10 & 7.2847E-11 & 3.5296E-11 \\
        & 3 & 2.6654E-12 & 6.4748E-13 & 1.8563E-13 & 6.0396E-14 & 1.9540E-14 & 7.1054E-15 \\
        \midrule
        \multirow{4}{*}{1.5}
        & 0 & 2.2769E-02 & 1.8416E-02 & 1.5221E-02 & 1.2749E-02 & 1.0760E-02 & 9.1092E-03 \\
        & 1 & 5.0288E-05 & 2.7367E-05 & 1.5987E-05 & 9.7862E-06 & 6.1741E-06 & 3.9644E-06 \\
        & 2 & 2.1434E-07 & 7.8326E-08 & 3.2261E-08 & 1.4386E-08 & 6.7571E-09 & 3.2735E-09 \\
        & 3 & 1.1008E-09 & 2.6957E-10 & 7.8065E-11 & 2.5270E-11 & 8.7965E-12 & 3.1957E-12 \\
        \midrule
        \multirow{4}{*}{3.0}
        & 0 & 4.8567E-02 & 3.9763E-02 & 3.3365E-02 & 2.8470E-02 & 2.4577E-02 & 2.1387E-02 \\
        & 1 & 2.2947E-04 & 1.2806E-04 & 7.7190E-05 & 4.9107E-05 & 3.2476E-05 & 2.2088E-05 \\
        & 2 & 2.0955E-06 & 7.9627E-07 & 3.4435E-07 & 1.6308E-07 & 8.2467E-08 & 4.3744E-08 \\
        & 3 & 2.3101E-08 & 5.9702E-09 & 1.8497E-09 & 6.5101E-10 & 2.5122E-10 & 1.0367E-10 \\
        \bottomrule
    \end{tabular}
    \caption{The values of $\left| \varepsilon_{K,\infty}^{(\widebar{\beta}, \widebar{\mu}_0)} (\ell,\nu) \right| $, evaluated by~\eqref{error} with $\ell=1/2$ and $N=252$, are computed based on the parameter settings in Example~\ref{ex.3}, with different values of price volatility $\sigma$ and degree of the mean reversions $\kappa$.}
    \label{table_truncation_error}
\end{table}

By setting $\sigma = 0.5$, we plot the convergence of MC volatility swap prices to $K^1_{{vol_1},3}$ for $\kappa_i$, $i=1,2,3$. Three sequences of MC volatility swap prices, with $N_p$ varying from $1 \times 10^3$ to $1 \times 10^5$, are plotted against $K^1_{{vol_1},3}$ for each value of $\kappa$, as shown in Figures~\ref{3a},~\ref{3c},~and~\ref{3e}, respectively. Additionally, we plot the corresponding sequences of variances of the MC volatility swap prices for $\kappa_i$, $i=1,2,3$, in Figures~\ref{3b},~\ref{3d},~and~\ref{3f}, respectively.
\begin{figure}[!ht]
    \centering
    \begin{subfigure}[!ht]{0.495\textwidth}
        \includegraphics[width=\textwidth]{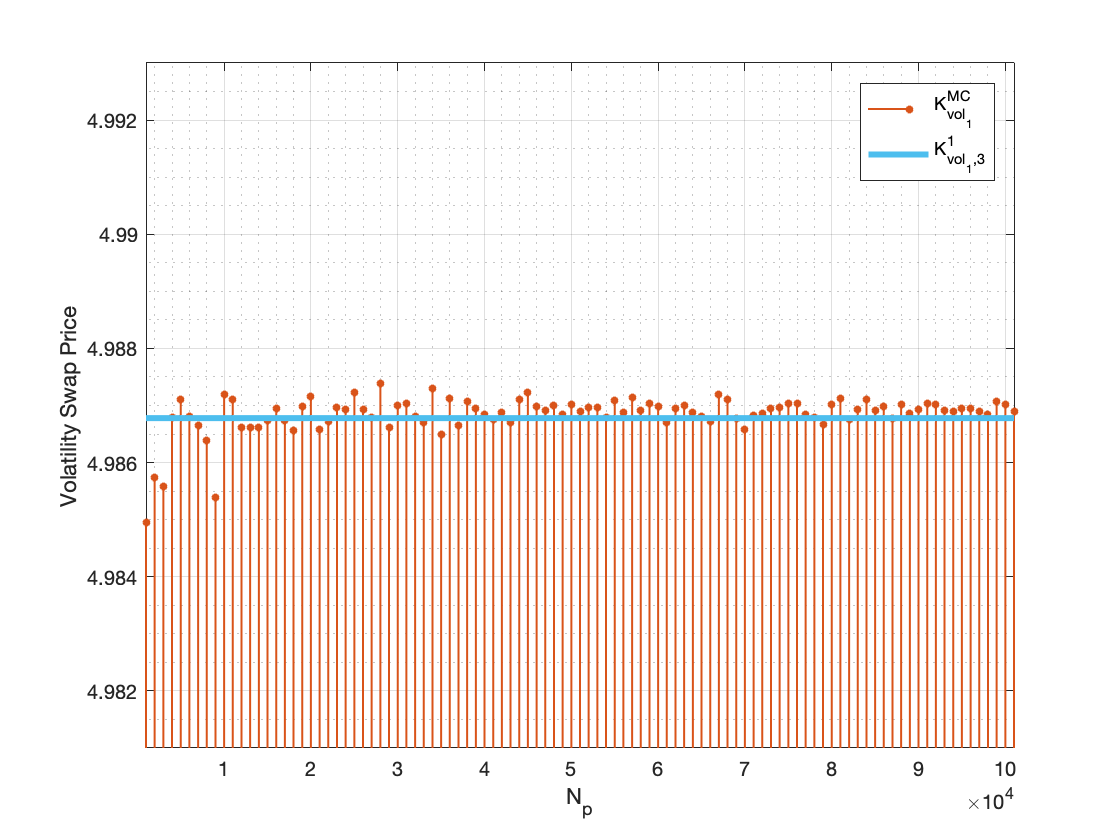}
        \caption{Volatility swap prices when $\kappa=0.5$}
        \label{3a}
    \end{subfigure}
    \begin{subfigure}[!ht]{0.495\textwidth}
        \includegraphics[width=\textwidth]{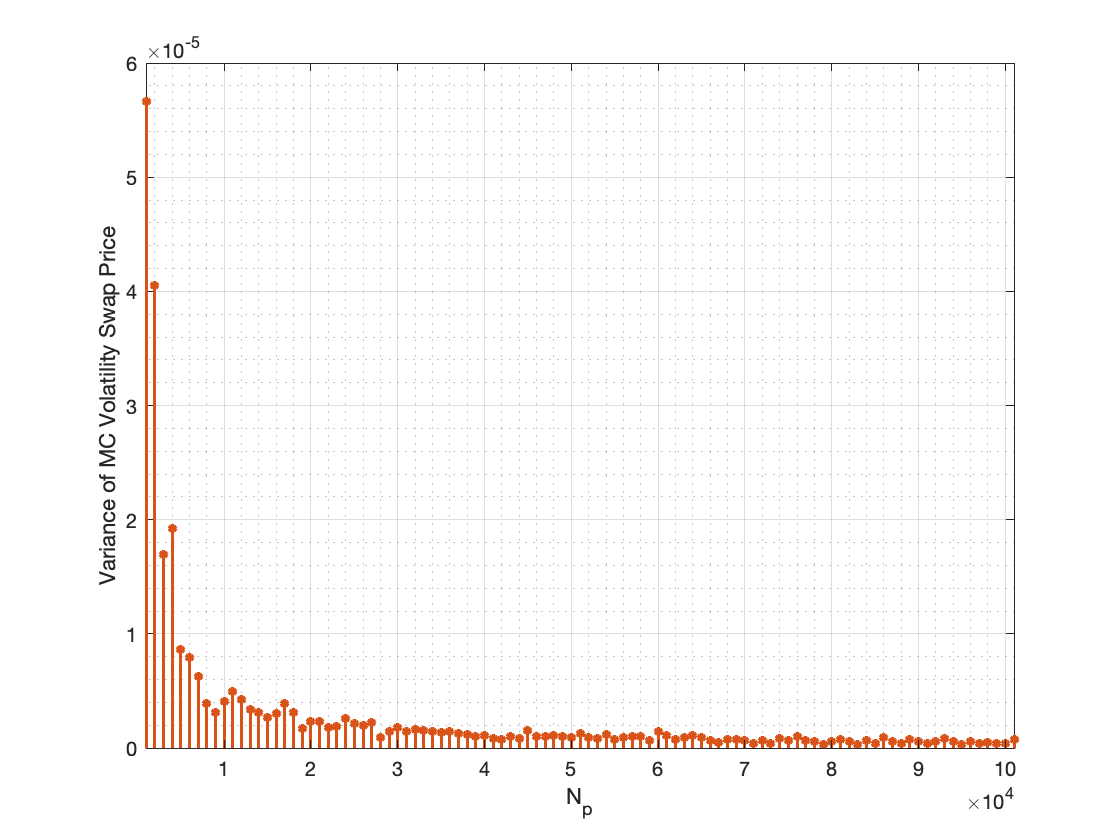}
        \caption{Variances of volatility swap price when $\kappa=0.5$}
        \label{3b}
    \end{subfigure}
    \begin{subfigure}[!ht]{0.495\textwidth}
        \includegraphics[width=\textwidth]{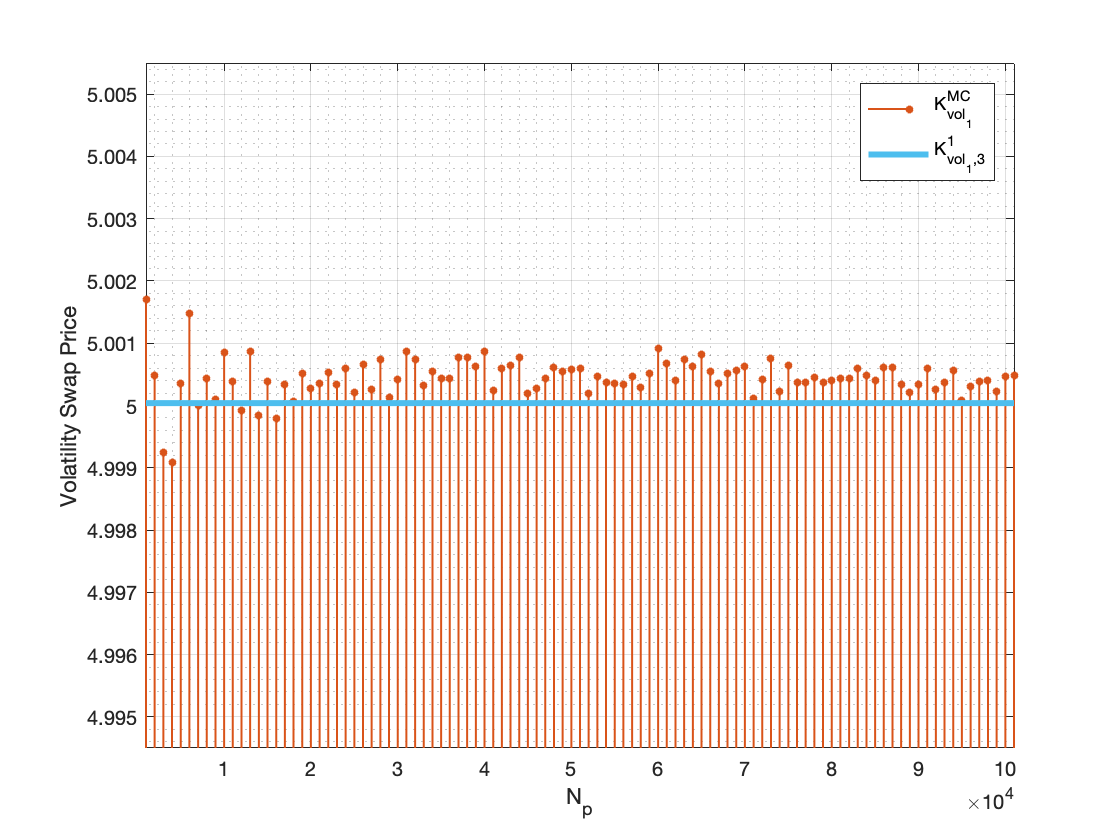}
        \caption{Variances of volatility swap price when $\kappa=1.5$}
        \label{3c}
    \end{subfigure}
    \begin{subfigure}[!ht]{0.495\textwidth}
        \includegraphics[width=\textwidth]{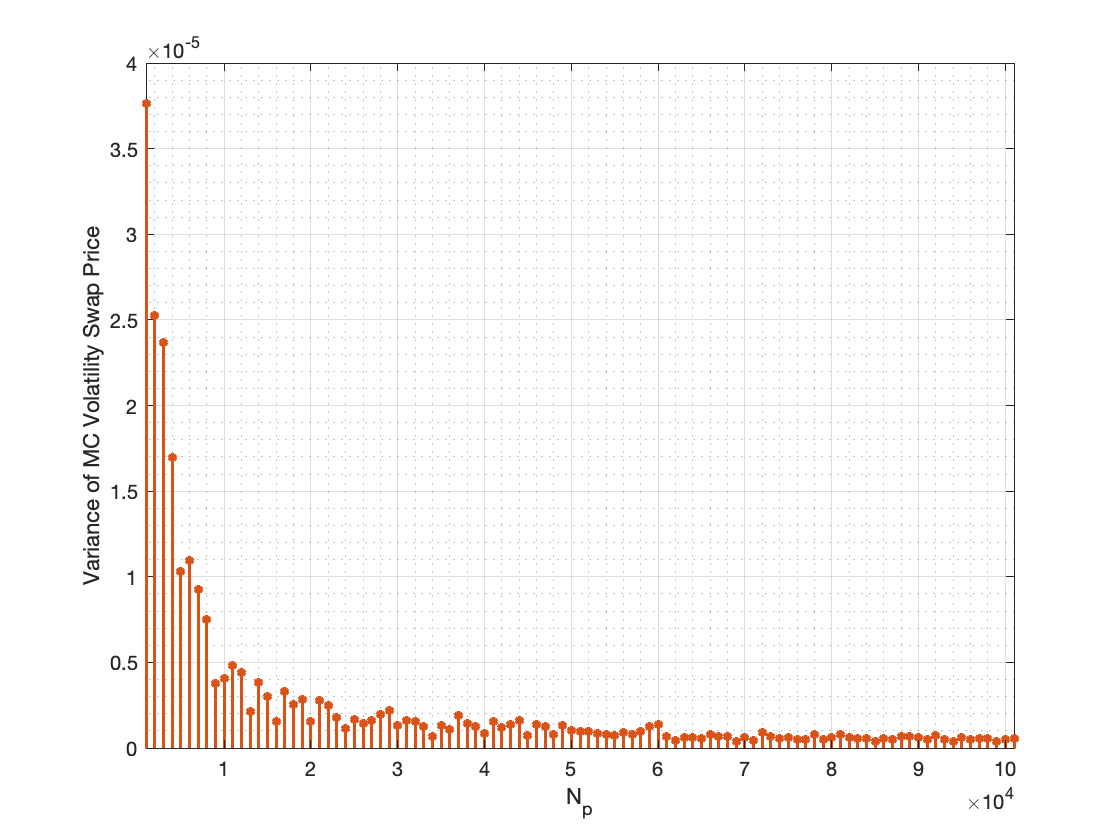}
        \caption{Variances of volatility swap price when $\kappa=1.5$}
        \label{3d}
    \end{subfigure}
    \begin{subfigure}[!ht]{0.495\textwidth}
        \includegraphics[width=\textwidth]{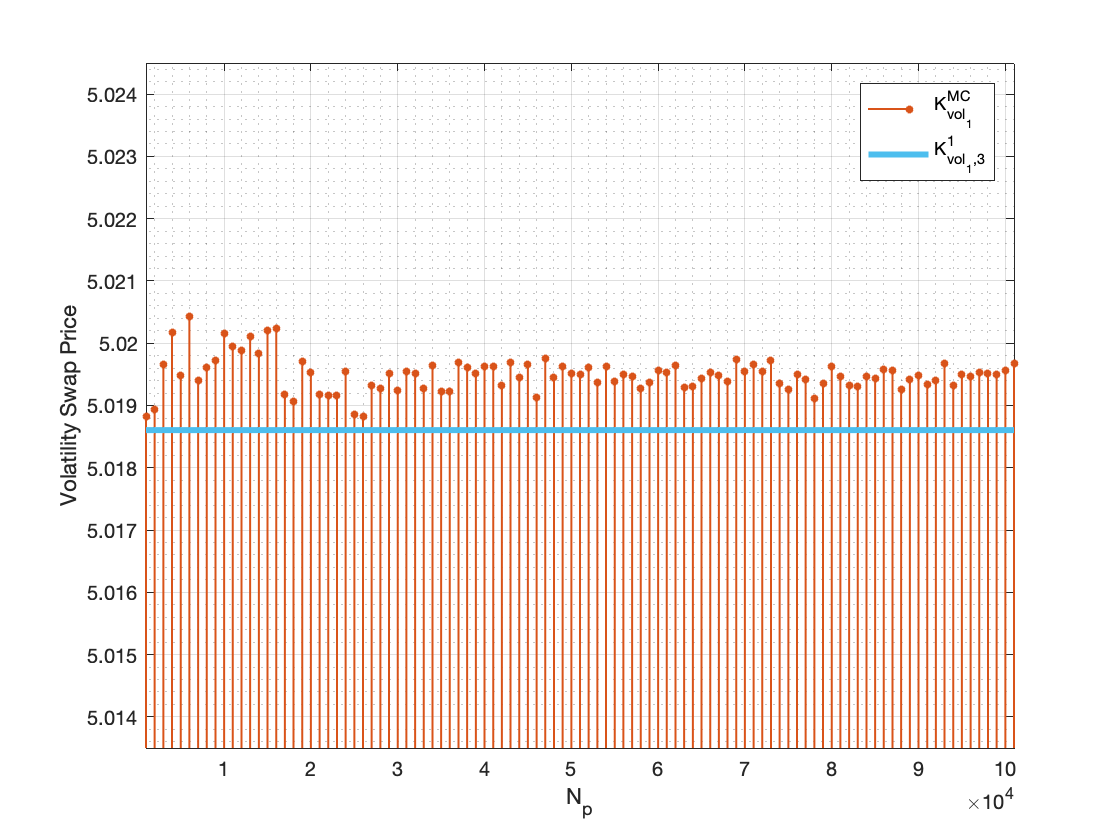}
        \caption{Variances of volatility swap price when $\kappa=3.0$}
        \label{3e}
    \end{subfigure}
    \begin{subfigure}[!ht]{0.495\textwidth}
        \includegraphics[width=\textwidth]{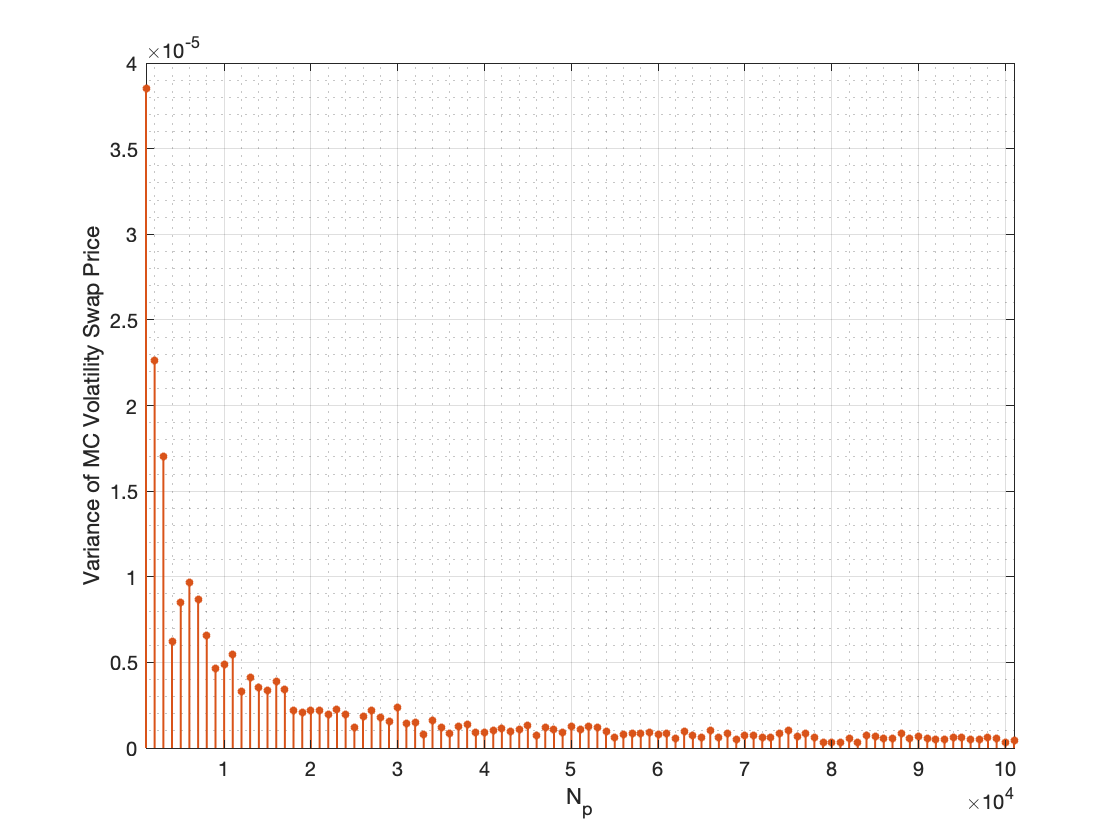}
        \caption{Variances of volatility swap price when $\kappa=3.0$}
        \label{3f}
    \end{subfigure}
    \caption{The convergences of the MC volatility swap prices compared with $K^1_{{vol_1},3}$ computed using~\eqref{KvolSwap1_1}, and the convergences the variances of the MC volatility swap prices to zero.}
\label{fig3}
\end{figure}

As shown in Figures~\ref{3a}--\ref{3b}, the MC volatility swap prices converge to $K^1_{{vol_1},3}$, and the variances of the MC volatility swap prices approach zero, confirming that the MC estimates are very close to the true value of $K^1_{{vol_1},3}$. Similar convergence behavior is observed for $\kappa_2 = 1.5$ and $\kappa_3 = 3$, as illustrated in Figures~\ref{3c}--\ref{3d} and Figures~\ref{3e}--\ref{3f}, respectively. Moreover, as shown in Figures~\ref{3a},~\ref{3c},~and~\ref{3e}, it can be observed that increasing the value of $\kappa$ results in slower convergence of the MC simulation results toward the analytical values.


\subsection{Effects on fair strike prices of volatility and variance swaps}
\medskip

\begin{Example} \label{ex.4}
    This example is studied on the effects of price volatility $\sigma$ in Schwartz one-factor model~\eqref{sde_xt} on fair strike prices of volatility and variance swaps presented in Theorems~\ref{thm_Kvol1_1} and~\ref{thm_Kvar1_1}, respectively.
\end{Example}

Let $T=1,\Delta t=\frac{T}{N-1}, t_1=0$ and $t_i=(i-1)\Delta t$ for $i=2,\ldots,N$. To measure the effects of the price volatility $\sigma$ on fair strike prices of volatility and variance swaps changing in the closed interval of time $[t_1,T]$, we set $S_0=2$ and $\mu=1$. We then obtain a variation of $Z_{i} = X_{t_{i}}-X_{t_{i-1}} = \ln{S_{t_{i}}}-\ln{S_{t_{i-1}}}$ under $\kappa_i$, for $i=1,2,3$ while $\sigma$ is varied over the interval $[0.005,0.1]$. With this parameter setting, we let $N=52$ and then we plot the volatility swap $K^1_{vol_1}$ obtained from~\eqref{KvolSwap1_1} against those obtained from MC simulations with $N_p=10^5$ sample paths denoted by $K^{MC}_{vol_1}$. Similarly, the variance swap $K^1_{var_1}$, obtained from~\eqref{KvarSwap1_1}, is compared with the MC simulation results $K^{MC}_{var_1}$ using the same number of paths. 

\begin{figure}[!ht]
    \centering
    \begin{subfigure}[!ht]{0.495\textwidth}
        \includegraphics[width=\textwidth]{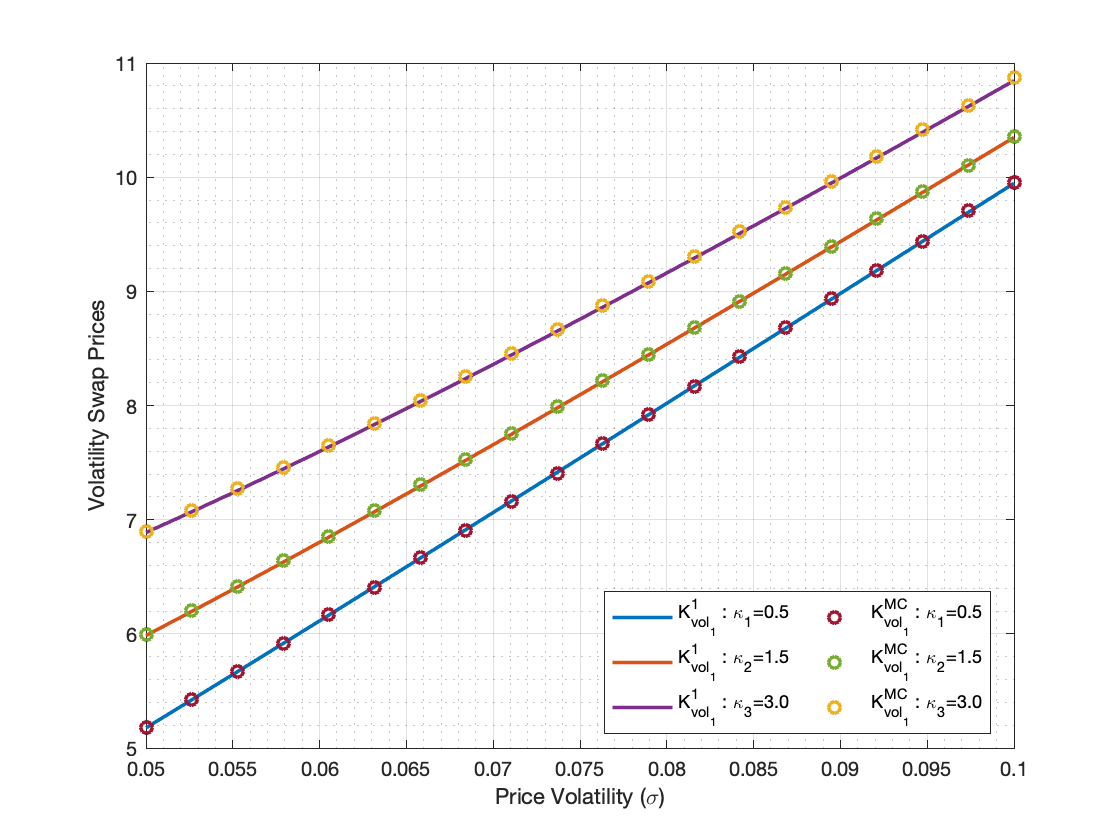}
        \caption{The volatility swap prices computed by using~\eqref{KvolSwap1_1} with $\kappa_i,$ $i=1,2,3$ and MC simulations.}
        \label{4a}
    \end{subfigure}
    \begin{subfigure}[!ht]{0.495\textwidth}
        \includegraphics[width=\textwidth]{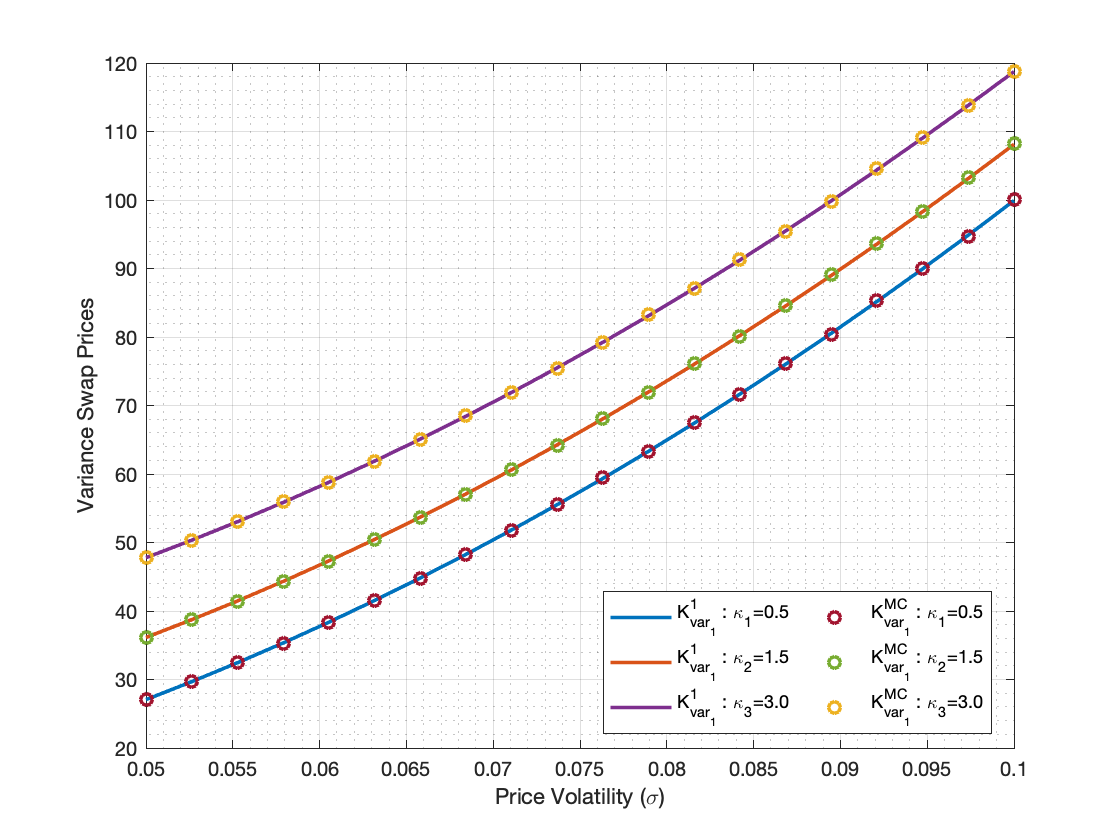}
        \caption{The variance swap prices computed by using~\eqref{KvarSwap1_1} with $\kappa_i,$ $i=1,2,3$ and MC simulations.}
        \label{4b}
    \end{subfigure}
    \caption{The comparison between the variance and variance swap prices computed by using our closed-form formula~\eqref{KvarSwap1_1} and~\eqref{KvolSwap1_1}, respectively, with the difference of prices volatility ($\sigma$) and degree of means reversion ($\kappa$), and the MC simulations.}
\label{fig4}
\end{figure}

The numerical results clearly demonstrate that the closed-form pricing formulas~\eqref{KvarSwap1_1} and~\eqref{KvolSwap1_1} yield values that perfectly match the results from MC simulations. Moreover, Figure~\ref{4a} shows that an increase in price volatility results in the fair strike prices of volatility swaps increasing for each $\kappa_i$ all $i=1,2,3$. For the fair strike prices of variance swaps, similar results are shown in Figure~\ref{4b}.

\begin{Example}
    In this example, we aim to examine the effects of the number of trading days $N$ on fair strike prices of volatility and variance swaps, as presented in Theorems~\ref{thm_Kvol1_1} and~\ref{thm_Kvar1_1}, respectively.  
\end{Example}

To study the effects of the number of trading days $N$ on fair strike prices of volatility and variance swaps, all parameters are set to be the same values as used in Example~\ref{ex.4}, except that we fix $\kappa=\kappa_1$ and consider three values of price volatility $\sigma:\sigma_1=0.05,\sigma_2=0.06$, and $\sigma_3=0.07$ . The number of observations $N$ is varied from $2$ to $252$. We compare the results by plotting $K^1_{vol_1}$ against $K^{MC}_{vol_1}$ and $K^1_{var_1}$ against $K^{MC}_{var_1}$ as shown in the following figure. 

\begin{figure}[!ht]
    \centering
    \begin{subfigure}[!ht]{0.495\textwidth}
        \includegraphics[width=\textwidth]{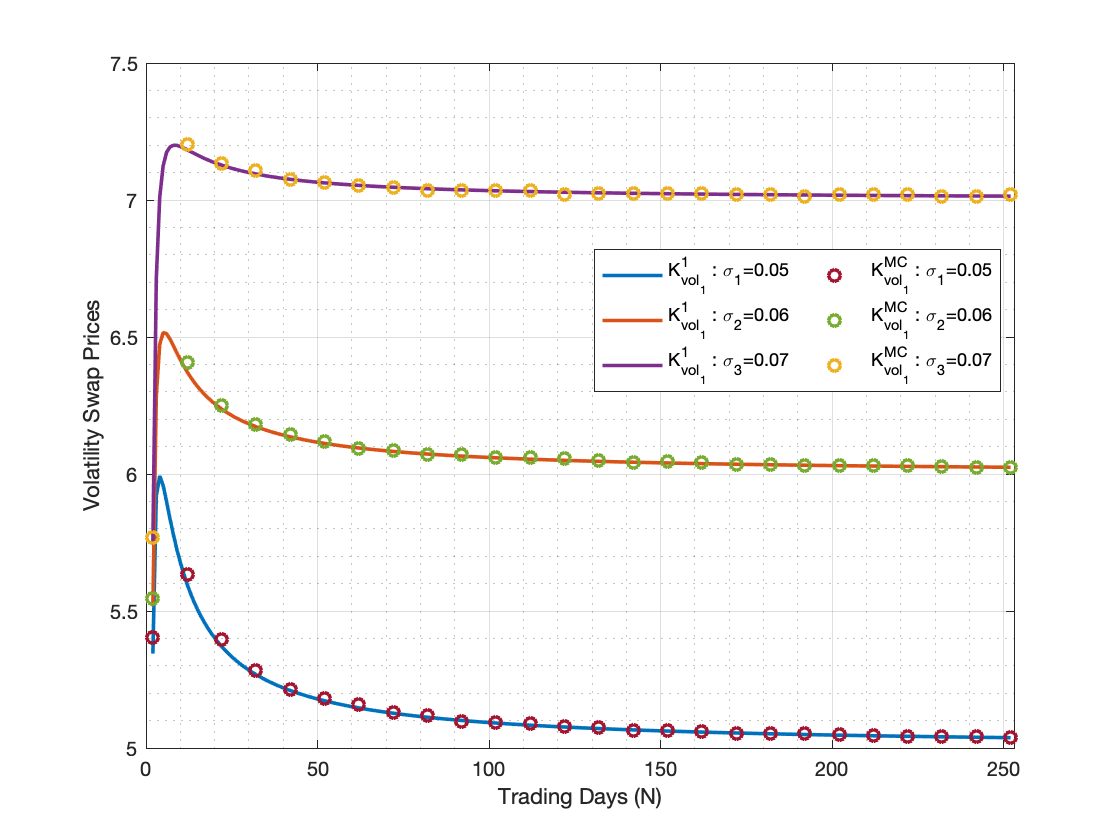}
        \caption{The volatility swap prices computed by using~\eqref{KvolSwap1_1} with $\sigma_i,$ $i=1,2,3,$ and the MC simulations.}
        \label{5a}
    \end{subfigure}
    \begin{subfigure}[!ht]{0.495\textwidth}
        \includegraphics[width=\textwidth]{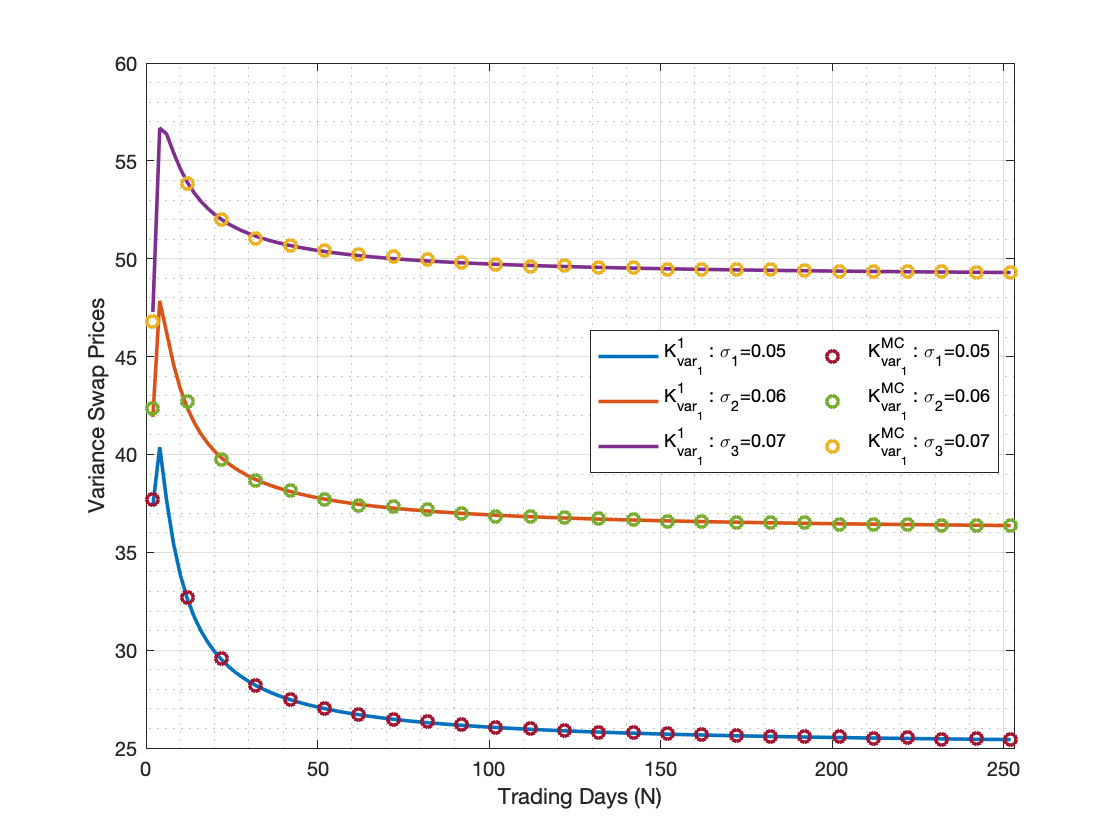}
        \caption{The variance swap prices computed by using~\eqref{KvarSwap1_1} with $\sigma_i,$ $i=1,2,3,$ and the MC simulations.}
        \label{5b}
    \end{subfigure}
    \caption{The comparison between the variance and variance swap prices computed by using our closed-form formulas~\eqref{KvarSwap1_1} and~\eqref{KvolSwap1_1}, respectively, with the difference number of trading days ($N$) and prices volatility ($\sigma$), and the MC simulations.}
\label{fig5}
\end{figure}

Figure~\ref{5a} illustrates that as the number of trading days increases, the fair strike prices of the volatility swap obtained from our closed-form pricing formula~\eqref{KvolSwap1_1} are consistent and converge to the results obtained from MC simulations for all $\sigma_i$ for $i = 1, 2, 3$. A similar convergence pattern is observed for the variance swap prices, as shown in Figure~\ref{5b}, where the prices are computed by~\eqref{KvarSwap1_1}.


\section{Conclusion} \label{sec_conclusion}
\medskip

In this paper, we first derived the distribution of the realized variance of log-returns with time-varying volatility for pricing volatility swaps under discrete-time observations, assuming the underlying asset follows the Schwartz one-factor model. We demonstrated that the realized variance is in the class of generalized noncentral chi-square distributions, as introduced by Koutras~\cite{koutras1986generalized} in 1986, and can be expressed as a linear combination of independent noncentral chi-square random variables with weighted parameters. Based on this result, we derived the first analytical pricing formulas for volatility and variance swaps, as well as for volatility and variance options, under both time-varying and constant volatility cases. To support practical implementation, we provided an error analysis and proposed simple closed-form approximations for pricing volatility derivatives under the Schwartz model. Numerical experiments were conducted, clearly demonstrating the accuracy and computational efficiency of the proposed formulas and their consistency with Monte Carlo simulations. Finally, we analyzed the effects of price volatility and the number of trading days on the fair strike prices of volatility and variance swaps. The results show that increasing price volatility leads to higher fair strike prices, while increasing the number of trading days causes the fair strike prices to converge to their true values. Overall, the newly derived analytical formulas significantly enhance computational efficiency, particularly for pricing volatility derivatives with square-root payoffs. This contributes a practical and effective tool for both researchers and practitioners in the financial derivatives market.


\section*{Acknowledgement}
We would like to express our sincere gratitude to the Development and Promotion of Science and Technology Talents Project (DPST) for supporting this research. We also thank the anonymous reviewers for their valuable comments and recommendations, which have helped to improve the quality and clarity of this work. Any remaining errors are solely our own.










\begin{thebibliography}{1}
\bibitem{blanco2001mean}
Blanco, Carlos and Soronow, David.
\newblock Mean reverting processes-energy price processes used for derivatives pricing \& risk management.
\newblock  {\em Commodities now }, 5(2):68--72, 2001.

\bibitem{brockhaus2000volatility}
Brockhaus, Oliver and Long, Douglas.
\newblock Volatility swaps made simple.
\newblock  {\em Risk-London-Risk Magazine Limited }, 13(1):92--95, 2000.

\bibitem{castano2005distribution}
Casta{\~n}o-Mart{\'\i}nez, Antonia and L{\'o}pez-Bl{\'a}zquez, Fernando.
\newblock Distribution of a sum of weighted noncentral chi-square variables.
\newblock  {\em Test}, 14(2):397--415, 2005.

\bibitem{chan2015pricing}
Chan, Leunglung and Platen, Eckhard.
\newblock Pricing volatility derivatives under the modified constant elasticity of variance model.
\newblock  {\em Operations Research Letters}, 43(4):419--422, 2015.

\bibitem{chumpong2021analytical}
Chumpong, Kittisak and Mekchay, Khamron and Thamrongrat, Nopporn.
\newblock Analytical formulas for pricing discretely-sampled skewness and kurtosis swaps based on {Schwartz}\rq s one-factor model.
\newblock  {\em Songklanakarin J. Sci. Technol}, 43(2):1--6, 2021.

\bibitem{chunhawiksit2016pricing}
Chunhawiksit, Chonnawat and Rujivan, Sanae
\newblock Pricing discretely-sampled variance swaps on commodities.
\newblock  {\em Thai Journal of Mathematics}, 14(3):711--724, 2016.

\bibitem{davis1977differential}
Davis, AW.
\newblock A differential equation approach to linear combinations of independent chi-squares.
\newblock {\em Journal of the American Statistical Association}, 72(357):212--214, 1977.

\bibitem{demeterfi1999guide}
Demeterfi, Kresimir, Derman, Emanuel, and Kamal, Michael.
\newblock A guide to volatility and variance swaps.
\newblock {\em Journal of Derivatives}, 6(4):9--32, 1999.

\bibitem{duangpan2022analytical}
Duangpan, Ampol, Boonklurb, Ratinan, Chumpong, Kittisak, and Sutthimat, Phiraphat.
\newblock Analytical formulas for conditional mixed moments of generalized stochastic correlation process.
\newblock {\em Symmetry}, 14(5):897, 2022.

\bibitem{dufresne2000laguerre}
Dufresne, Daniel.
\newblock Laguerre series for Asian and other options.
\newblock {\em Mathematical Finance}, 10(4):407--428, 2000.

\bibitem{FrancoMaximumLE}
Garc{\'\i}a Franco, Jos{\'e} Carlos.
\newblock Maximum likelihood estimation of mean reverting processes.
\newblock Technical Report, Citeseer, 2023.
\newblock \url{https://www2.stat.duke.edu/~scs/Projects/StructuralPhylogeny/OUprocess.pdf}

\bibitem{hull2016options}
Hull, John C. and Basu, Sankarshan.
\newblock {\em Options, Futures, and Other Derivatives}.
\newblock Pearson Education India, 2016.

\bibitem{javaheri2004garch}
Javaheri, Alireza, Wilmott, Paul, and Haug, Espen G.
\newblock {\textsc{GARCH}} and volatility swaps.
\newblock {\em Quantitative Finance}, 4(5):589--595, 2004.

\bibitem{johnson1995continuous}
Johnson, Norman L., Kotz, Samuel, and Balakrishnan, Narayanaswamy.
\newblock {\em Continuous Univariate Distributions, Volume 2}.
\newblock John Wiley \& Sons, 1995.

\bibitem{jouini2001option}
Jouini, Elyès, Cvitanić, Jakša, and Musiela, Marek.
\newblock {\em Option Pricing, Interest Rates and Risk Management}.
\newblock Cambridge University Press, 2001.

\bibitem{kotz1967series}
Kotz, Samuel, Johnson, Norman L., and Boyd, D. W.
\newblock Series representations of distributions of quadratic forms in normal variables II. Non-central case.
\newblock {\em The Annals of Mathematical Statistics}, 38(3):838--848, 1967.

\bibitem{koutras1986generalized}
Koutras, Markos.
\newblock On the generalized noncentral chi-squared distribution induced by an elliptical gamma law.
\newblock {\em Biometrika}, 73(2):528--532, 1986.

\bibitem{ma2010efficient}
Ma, JunMei and Xu, Chenglong.
\newblock An efficient control variate method for pricing variance derivatives.
\newblock {\em Journal of Computational and Applied Mathematics}, 235(1):108--119, 2010.

\bibitem{mehta1999equity}
Mehta, Nina.
\newblock Equity vol swaps grow up.
\newblock {\em Derivatives Strategy Magazine}, 1999.

\bibitem{mirevski2010some}
Mirevski, S. P. and Boyadjiev, L.
\newblock On some fractional generalizations of the Laguerre polynomials and the Kummer function.
\newblock {\em Computers \& Mathematics with Applications}, 59(3):1271--1277, 2010.

\bibitem{olver2010nist}
Olver, Frank W. J.
\newblock {\em NIST Handbook of Mathematical Functions Hardback and CD-ROM}.
\newblock Cambridge University Press, 2010.

\bibitem{ruben1962probability}
Ruben, Harold.
\newblock Probability content of regions under spherical normal distributions, IV: The distribution of homogeneous and non-homogeneous quadratic functions of normal variables.
\newblock {\em The Annals of Mathematical Statistics}, 33(2):542--570, 1962.

\bibitem{rujivan2021analytically}
Rujivan, Sanae and Rakwongwan, Udomsak.
\newblock Analytically pricing volatility swaps and volatility options with discrete sampling: Nonlinear payoff volatility derivatives.
\newblock {\em Communications in Nonlinear Science and Numerical Simulation}, 100:105849, 2021.

\bibitem{rujivan2012simplified}
Rujivan, Sanae and Zhu, Song-Ping.
\newblock A simplified analytical approach for pricing discretely-sampled variance swaps with stochastic volatility.
\newblock {\em Applied Mathematics Letters}, 25(11):1644--1650, 2012.

\bibitem{schwartz1997stochastic}
Schwartz, Eduardo S.
\newblock The stochastic behavior of commodity prices: Implications for valuation and hedging.
\newblock {\em The Journal of Finance}, 52(3):923--973, 1997.

\bibitem{shah1963distribution}
Shah, B. K.
\newblock Distribution of definite and of indefinite quadratic forms from a non-central normal distribution.
\newblock {\em The Annals of Mathematical Statistics}, 34(1):186--190, 1963.

\bibitem{shah1961distribution}
Shah, B. K. and Khatri, C. G.
\newblock Distribution of a definite quadratic form for non-central normal variates.
\newblock {\em The Annals of Mathematical Statistics}, pages 883--887, 1961.

\bibitem{shen2013pricing}
Shen, Yang and Siu, Tak Kuen.
\newblock Pricing variance swaps under a stochastic interest rate and volatility model with regime-switching.
\newblock {\em Operations Research Letters}, 41(2):180--187, 2013.

\bibitem{slater1966generalized}
Slater, Lucy Joan.
\newblock {\em Generalized Hypergeometric Functions}.
\newblock Cambridge University Press, UK, 1966.

\bibitem{stuart1999kendall}
Stuart, Alan, Ord, J. Keith, and Arnold, Steven.
\newblock Kendall's advanced theory of statistics. vol. 2a: Classical inference and the linear model.
\newblock {\em Kendall's Advanced Theory of Statistics. Vol. 2A: Classical Inference and the Linear Model}, 1999.

\bibitem{sutthimat2022closed}
Sutthimat, Phiraphat and Mekchay, Khamron.
\newblock Closed-form formulas for conditional moments of inhomogeneous {\em Pearson} diffusion processes.
\newblock {\em Communications in Nonlinear Science and Numerical Simulation}, 106:106095, 2022.

\bibitem{swishchuk2010variance}
Swishchuk, Anatoliy V.
\newblock Variance and volatility swaps in energy markets.
\newblock Available at SSRN 1628178, 2010.

\bibitem{theoret2002pricing}
Théoret, Raymond, Zabré, Lydie, and Rostan, Pierre.
\newblock {\em Pricing volatility swaps: empirical testing with Canadian data}.
\newblock [Université du Québec à Montréal], Centre de recherche en gestion, 2002.

\bibitem{weraprasertsakun2017closed}
Weraprasertsakun, Anurak and Rujivan, Sanae.
\newblock A closed-form formula for pricing variance swaps on commodities.
\newblock {\em Vietnam Journal of Mathematics}, 45(1-2):255--264, 2017.

\bibitem{windcliff2006pricing}
Windcliff, Heath, Forsyth, Peter A., and Vetzal, Kenneth R.
\newblock Pricing methods and hedging strategies for volatility derivatives.
\newblock {\em Journal of Banking \& Finance}, 30(2):409--431, 2006.

\bibitem{zheng2014closed}
Zheng, Wendong and Kwok, Yue Kuen.
\newblock Closed form pricing formulas for discretely sampled generalized variance swaps.
\newblock {\em Mathematical Finance}, 24(4):855--881, 2014.

\bibitem{zhu2011closed}
Zhu, Song-Ping and Lian, Guang-Hua.
\newblock A closed-form exact solution for pricing variance swaps with stochastic volatility.
\newblock {\em Mathematical Finance: An International Journal of Mathematics, Statistics and Financial Economics}, 21(2):233--256, 2011.

\end{thebibliography}

\end{document}